\pdfoutput=1
\documentclass[a4paper,english]{jnsao}
\usepackage[utf8]{inputenc}
\usepackage{enumerate}
\usepackage[shortlabels]{enumitem}
\usepackage{comment}
\usepackage[nameinlink,capitalize]{cleveref}
\usepackage{standalone}
\usepackage{blkarray,booktabs}
\usepackage{colortbl,makecell}
\usepackage[noadjust]{cite}
\usepackage{pgfplots}
\usepackage[svgnames]{xcolor}
\usepgfplotslibrary{colorbrewer}

\pgfplotsset{
    compat=1.18,
    edge/.style = {thick, gray!50!black,forget plot},
    data/.style = {mark=*, blue, only marks},
    backdata/.style = {mark=*, blue!30!white, only marks, onlyback, forget plot},
    itergen/.style = {line width = 0.7pt},
    iter1/.style = {mark=x, Crimson, itergen},
    iter2/.style = {mark=star, BlueViolet, itergen},
    iter3/.style = {mark=asterisk, DarkOliveGreen, itergen},
    backiter1/.style = {mark=x, Crimson!70!white, onlyback, forget plot, itergen},
    backiter2/.style = {mark=star, BlueViolet!70!white, onlyback, forget plot, itergen},
    backiter3/.style = {mark=asterisk, DarkOliveGreen!70!white, onlyback, forget plot, itergen},
    origin/.style = {mark=o, green, only marks, thick},
    surfstyle/.style = {very nearly opaque, forget plot},
    legend style = {
        inner sep = 0pt,
        outer xsep = 5pt,
        outer ysep = 0pt,
        legend cell align = left,
        align = left,
        draw = none,
        fill = none,
        font = \footnotesize
    },
    illustr3d/.style = {
        axis equal,
        width=0.7\linewidth,
        height=0.7\linewidth,
        scale only axis,
        enlargelimits=false,
        colormap access=map,
        colormap/Blues,
        colorbar,
        point meta rel=axis wide,
        shader = interp,
        xlabel = {$x$},
        ylabel = {$y$},
        zlabel = {$z$},
        ticks = none,
        axis line style = {draw=none},
        legend columns = 3,
        legend style = {
            at = {(0.5, 1.1)},
            anchor = north,
            column sep = 1ex,
        },
        mark size=1.5pt,
        colorbar style = {width = 1ex, height = 0.4\linewidth},
    }
}

\newcommand{\term}{\emph}

\newcommand{\field}[1]{\mathbb{#1}}
\newcommand{\N}{\mathbb{N}}

\newcommand{\R}{\field{R}}


\newcommand{\abs}[1]{|#1|}

\newcommand{\inv}[1]{#1^{-1}}

\newcommand{\Union}\bigcup
\newcommand{\Isect}\bigcap
\newcommand{\union}\cup
\newcommand{\isect}\cap
\newcommand{\bigunion}\bigcup
\newcommand{\bigisect}\bigcap

\newcommand{\defeq}{:=}

\newcommand{\downto}{\searrow}
\newcommand{\upto}{\nearrow}

\newcommand{\overbar}[1]{\mkern 1.5mu\overline{\mkern-1.5mu#1\mkern-1.5mu}\mkern 1.5mu}

\DeclareMathOperator*{\argmin}{arg\,min}

\DeclareMathOperator{\dom}{dom}

\DeclareMathOperator{\diam}{diam}

\DeclareMathOperator{\prox}{prox}
\DeclareMathOperator{\gdesc}{g-desc}
\DeclareMathOperator{\gexp}{g-exp}
\DeclareMathOperator{\desc}{desc}

\newcommand{\iprod}[2]{\langle #1,#2\rangle}

\def \weaktostarSym{\setbox0=\hbox{$\rightharpoonup$}\rlap{\hbox 
        to\wd0{\hss\raise1ex\hbox{$\scriptscriptstyle{*\,}$}\hss}}\box0}
    

\def\this#1{#1^k}
\def\nexxt#1{#1^{k+1}}

\def\nextx{\nexxt{x}}

\def\thisx{\this{x}}

\let\phi\varphi

\let\epsilon\varepsilon

\renewrobustcmd{\downto}{{{\mathchoice%
            {\rotatebox[origin=c]{-20}{$\to$}}
            {\rotatebox[origin=c]{-20}{$\to$}}
            {\rotatebox[origin=c]{-20}{\scalebox{0.75}{$\to$}}}
            {\rotatebox[origin=c]{-20}{\scalebox{0.6}{$\to$}}}
}}}

\renewrobustcmd{\upto}{{{\mathchoice%
            {\rotatebox[origin=c]{20}{$\to$}}
            {\rotatebox[origin=c]{20}{$\to$}}
            {\rotatebox[origin=c]{20}{\scalebox{0.75}{$\to$}}}
            {\rotatebox[origin=c]{20}{\scalebox{0.6}{$\to$}}}
}}}

\setlength{\marginparwidth}{2cm}
\usepackage[textsize=footnotesize,colorinlistoftodos]{todonotes}

\usepackage[nameinlink,capitalize]{cleveref}

\theoremstyle{definition}
\newtheorem{assumption}[theorem]{Assumption}
\crefname{assumption}{Assumption}{Assumptions}

\usepgfplotslibrary{fillbetween}
\usetikzlibrary{external}
\usepackage{tikz-3dplot}
\tikzexternaldisable
\makeatletter
\let\@the@H@page\relax
\makeatother

\title{Forward-backward splitting in bilaterally bounded Alexandrov spaces}
\shorttitle{Forward-backward splitting in Alexandrov spaces}

\date{2025-03-31 (revised 2026-04-02)}

\author{
    Heikki von Koch\thanks{%
        Department of Mathematics and Statistics, University of Helsinki, Finland,
        \email{heikki.vonkoch@helsinki.fi},
        \orcid{0009-0006-6906-9702}
    }
    \and
    Tuomo Valkonen\thanks{%
        MODEMAT Research Center in Mathematical Modeling and Optimization, Quito, Ecuador
        \emph{and}
        Department of Mathematics and Statistics, University of Helsinki, Finland,
        \emph{and}
        Escuela Politécnica Nacional, Quito, Ecuador
        \email{tuomo.valkonen@iki.fi},
        \orcid{0000-0001-6683-3572}
   }
}
\shortauthor{von Koch, Valkonen}

\acknowledgements{%
    This research has been supported by the Jane and Aatos Erkko Foundation.
}

\manuscriptcopyright{}
\manuscriptlicense{}

\manuscriptstatus{Manuscript}
\manuscripteprinttype{arxiv}
\manuscripteprint{2503.24126}

\begin{document}

\maketitle

\begin{abstract}
    With the goal of solving optimisation problems on non-Riemannian manifolds, such as geometrical surfaces with sharp edges, we develop and prove the convergence of a forward-backward method in Alexandrov spaces with curvature bounded both from above and from below.
    This bilateral boundedness is crucial for the availability of both the gradient and proximal steps, instead of just one or the other.
    We numerically demonstrate the behaviour of the proposed method on simple geometrical surfaces in $\R^3$.
\end{abstract}

\section{Introduction}

Our goal is to develop proximal gradient (i.e., forward-backward) methods for the problem
\begin{equation}
	\label{eq:intro:problem}
	\min_{x \in X}~ F(x) + G(x)
\end{equation}
in bilaterally bounded, complete, and locally compact Alexandrov spaces, in particular, on manifolds whose natural atlas may not possess a Riemannian metric, such as two-dimensional surfaces with kinks, embedded in $\R^3$. Here both $F$ and $G$ are convex with $F$ smooth, in a suitable sense that we will discuss in \cref{sec:convex}.
Alexandrov spaces, as will be introduced in \cref{sec:alexandrov}, roughly speaking, are manifold-like spaces with curvature bounded from either above or below.

\emph{No such method yet exists in the literature.} In \cite{ohta2015discretetime}, gradient methods are defined in Alexandrov spaces with lower curvature bounds, and proximal point methods are defined in spaces with upper curvature bounds.
With appropriate assumptions on the objective function and step sizes, both methods converge to a minimiser. It stands to reason that by combining the methods, we could create a proximal gradient method in \term{bilaterally bounded} Alexandrov spaces, i.e., spaces with both upper and lower curvature bounds.

The standard Hilbert space proximal gradient method was first generalised to Stiefel manifolds in \cite{stiefelproxgrad}, where the objective function was a sum of a convex function and a smooth but possibly nonconvex function. This method was then extended to smooth Riemannian manifolds \cite{riemproxgradhuang}, with both functions being retraction convex. Its linear convergence rate was further studied in \cite{riemproxgradlinear}. However, these methods are not intrinsic but are based on the total surrogate model.
On manifolds, this involves solving a subproblem in the tangent space, which can be difficult.
Such an approach could also be taken in Alexandrov spaces, see \cref{rem:total-surrogate}.
It avoids the need for an upper curvature bound, and presents a possible direction for future research.
\emph{Our proposed method, however, is fully intrinsic:} the calculation of the gradient step and proximal step are completely separate phases. \emph{This can be much easier than minimising the total surrogate model.}
An intrinsic nonconvex method using the Riemannian Kurdyka-\L{}ojasiewicz property in Hadamard manifolds was introduced in \cite{nonconvexriemproxgrad}. In the same limited Hadamard setting, a convex method similar to ours was also very recently introduced in \cite{bergmann2025intrinsicriemannianproximalgradient}. An advantage of the bilateral Alexandrov approach is that the metric induced by the natural $C^{1,1}$ distance coordinate atlas has only $C^{0,1}$ regularity, which is optimal. In contrast, standard convex optimisation arguments in smooth Riemannian geometry rely on comparison theorems that typically need the metric to be $C^2$.

Alexandrov spaces with just upper curvature bounds include Hadamard spaces, such as the manifold of symmetric positive-definite matrices. Several recent works discuss optimisation in Hadamard spaces \cite{bacak2014convex,ferreira2002proximal,bergmann2021fenchel,bento2017iteration,weinmann2014total,bergmann2016parallel,banertbackwardbackward,Jost1995nonpositivecurvature,mayergradientflowsnonpositively}.
Hadamard spaces, which have nonpositive curvature, are easy from the point of view of optimisation, as geodesics are unique and an inequality reminiscent of the Pythagoras identity in Hilbert spaces, holds in the correct direction for standard convergence proofs to go through with little modifications \cite{tuomov-firstorder}.

Informally speaking, the curvature bounds of an Alexandrov space amount to a certain degree of concavity or convexity of its distance function when compared to its counterpart in the model spaces. Geometrically this means that the triangles in a nonnegatively/nonpositively curved Alexandrov space are not thinner/thicker than their Euclidean counterparts. Even though at first glance these purely metric spaces have very little to do with the well-behaved model spaces they are compared to, it turns out that much can be said about their local and global structure. For instance, perhaps the most important shared property of all Alexandrov spaces is that of the well-defined notion of an angle. It allows us to develop differential calculus in the form of differentials and gradients which further allow us to talk about optimality conditions and gradient flow.

Earlier works on optimisation in general Alexandrov spaces \cite{ohta2015discretetime,proxsplitcatlauster}, which we review in \cref{sec:prox-and-grad}, assume the effective domain of the map $G$ to be small or, how it is written in those works, they calculate the proximal map not for just any $G$, but for $G + \delta_A$, where $A$ is assumed to have a small diameter.
This has the effect of forcing the existence of geodesics between all points of $A$, allowing for simple “convex optimisation” style proofs even in non-Hadamard spaces. We do not make that restriction.
Instead our proofs are more reminiscent of convergence proofs for nonconvex composite optimisation \cite{tuomov-nlpdhgm-redo,tuomov2024online-eit,tuomov2024tracking}, based on a priori and a posteriori estimates. The a priori estimate proves that, if our current iterate is close enough to a solution (i.e., deep enough in the interior of $A$), then the next iterate does not escape from $A$. The a posteriori estimate then proves that we do not escape at all, and even advance towards the solution.
The effect is that our results only require initialising close to solution of \eqref{eq:intro:problem}, instead of restricting the problem to a set of bounded diameter. The restriction is a common assumption in optimisation on manifolds and is necessitated by the uniqueness of geodesics between points in the set where our iterates lie; the maximum diameter of this set depends on the upper curvature bound.

The bilateral bounds of a complete locally compact Alexandrov space force the space to be a topological manifold with a continuous metric (\cref{bilat}) and complete tangent spaces. These allow us to use the exponential map instead of the rather technical gradient exponential map in the definition of the gradient descent map. This has the benefit of being easier to compute in simple examples. One could also potentially use the gradient exponential in the gradient descent map, but as the gradient exponential and the exponential maps agree on whenever the latter is defined, these result in the same map, at least, in geodesically convex sets.

After the aforementioned introductory material of \cref{sec:alexandrov,sec:convex,sec:prox-and-grad}, our main contributions start in \cref{sec:proxgrad} with a study of contractivity properties of a proximal gradient map in bilaterally bounded Alexandrov spaces. We then use this work in \cref{sec:fb} to prove the convergence of a proximal gradient method based on the iterative application of the eponymous map. We finish in \cref{sec:examples} by working out and numerically illustrating the proximal gradient map for a Lasso-type problem on the surfaces of a cube and a capped cylinder in $\R^3$.

\section{Alexandrov spaces}
\label{sec:alexandrov}

We start by introducing definitions and notation used in this paper. For a comprehensive treatment of metric geometry and basics of spaces with curvature bounded from above and below, see for example \cite{burago2001course}.
For a fundamental study of general Alexandrov spaces, see \cite{petruninfoundations}.

Let $(X,d)$ be a metric space. A continuous curve $\gamma : [0,1] \to X$ is called a unit-speed geodesic (or simply, geodesic) if for all $s, t \in [0,1]$ we have $d(\gamma(s),\gamma(t)) = |s-t|$, i.e., it is globally distance-preserving. Metric spaces $(X,d)$ that admit a unit-speed geodesic between any two points $x$ and $y$ are called geodesic, and such unit-speed geodesics from $x$ to $y$ will be denoted by $\gamma(t) = x \#_t y$ for $t \in [0,1]$ (note that the curve may not be unique). A subset $A \subset X$ of a geodesic space $X$ is called (geodesically) convex if for every two points $x,y \in A$, any unit-speed geodesic $x \#_t y$ lies in $A$.

Given $\kappa \in \R$, the model $\kappa$-plane, denoted by $\mathbb{M}^2(\kappa)$, is a complete simply-connected $2$-dimensional Riemannian manifold of constant curvature $\kappa$. This limits the metric space $(\mathbb{M}^2(\kappa),d_{\mathbb{M}^2(\kappa)})$ to be one of three possibilities (up to isometry): the hyperbolic space of constant curvature, if $\kappa < 0$; the Euclidean plane, if $\kappa = 0$; or the sphere of constant curvature, if $\kappa > 0$. Set $D_\kappa \coloneqq \diam \mathbb{M}^2(\kappa)$, where the diameter of a metric space $(X,d)$ is defined by $\diam X \coloneqq \sup_{x, y \in X} d(x,y)$. We have $D_\kappa = \infty$, if $\kappa \leq 0$ and $D_\kappa = \pi/\sqrt{\kappa}$, if $\kappa > 0$.

To define geodesic metric spaces with bounded sectional curvature, we need to compare the metric triangles defined in $X$ to those defined in a model $\kappa$-plane $\mathbb{M}^2(\kappa)$. For any three points $x,y,z \in X$ satisfying $d(x,y) + d(y,z) + d(z,x) < 2D_\kappa$, there are unique points (up to isometry, \cite[2.14 Lemma]{bridson2011metric}) $\tilde{x}, \tilde{y}, \tilde{z} \in \mathbb{M}^2(\kappa)$ such that
	\[
		d_{\mathbb{M}^2(\kappa)}(\tilde{x}, \tilde{y}) = d(x,y), \quad d_{\mathbb{M}^2(\kappa)}(\tilde{y}, \tilde{z}) = d(y,z), \quad d_{\mathbb{M}^2(\kappa)}(\tilde{z}, \tilde{x}) = d(z,x).
	\]
The triangle $\triangle \tilde{x} \tilde {y} \tilde{z}$ is called the comparison triangle of $\triangle xyz$ in $\mathbb{M}^2(\kappa)$. A unit-speed geodesic $\tilde{\gamma}: [0,1] \to \mathbb{M}^2(\kappa)$ from $\tilde{x}$ to $\tilde{y}$ will be denoted by $\tilde{x} \#_t \tilde{y}$.

\begin{definition}[Alexandrov spaces]
A geodesic metric space $(X,d)$ is called an Alexandrov space with curvature bounded from above by $\kappa \in \R$ if, for every $x,y,z \in X$ such that $d(x,y) + d(y,z) + d(z,x) < 2D_\kappa$ and every unit speed geodesic $y \#_t z$,
	\[
		d(y \#_t z, x) \leq d_{\mathbb{M}^2(\kappa)}(\tilde{y} \#_t \tilde{z}, \tilde{x});
	\]
$(X,d)$ is called an Alexandrov space with curvature bounded from below by $\kappa$ if the reverse inequality holds.
\end{definition}

We call both spaces bounded from above and spaces bounded from below by the generic name Alexandrov space. We will usually distinguish the bounds from each other by denoting the upper bound by $\overline{\kappa}$ and the lower bound by $\underline{\kappa}$. Some typical examples are given here.

\begin{example}
\begin{enumerate}[leftmargin=*, nosep]
	\item Complete Riemannian manifolds with sectional curvature at most $\overline{\kappa}$ and their Gromov-Hausdorff limits are Alexandrov spaces with curvature bounded from above by $\overline{\kappa}$. Others include locally simply connected subsets of $\R^2$, isometric gluings of spaces bounded from above along convex sets, Hadamard manifolds and Hilbert spaces (such as $\R^n$; bounded from above by $0$), and metric trees (bounded from above by any $\overline{\kappa} \in \R$).
	Away from the vertices, polyhedral surfaces in $\R^n$ are bounded from above with $\overline{\kappa}=0$.

	\item Complete Riemannian manifolds with sectional curvature at least $\underline{\kappa}$ and their Gromov-Hausdorff limits are Alexandrov spaces with curvature bounded from below by $\underline{\kappa}$. Other examples include Hilbert spaces (bounded from below by $0$), $L^2$-Wasserstein spaces over nonnegatively curved spaces, and convex hypersurfaces of Riemannian manifolds (such as graphs of convex functions of the form $F: \R^n \to \R$ and boundaries of closed convex bodies in $\R^n$). Importantly, convex polyhedral surfaces in $\R^n$ are bounded from below by $0$ and the tangent spaces of non-vertex points are isometric to $\R^n$.
    Equipped with an appropriate distortion distance, the space of metric measure spaces is also an Alexandrov space with curvature bounded from below \cite{sturm2023space}. Such spaces have relevance to, e.g., geometric shape interpolation \cite{beier2024tangentialfixpointiterationsgromovwasserstein}.

	\item Spaces with bilaterally bounded sectional curvature in the sense of Alexandrov include Hilbert spaces (bounded from above and below by $0$) and, for example, a finitely long flat cylinder with hemispheres isometrically glued to its ends (bounded from below by $0$ and above by the (constant) curvature of the sphere). In fact, spaces with bilateral curvature bounds are Gromov-Hausdorff limit spaces of smooth Riemannian manifolds with uniformly bounded bilateral sectional curvature.
\end{enumerate}
\end{example}

To study the local geometry of Alexandrov spaces, we need to replace the concept of tangent spaces from the theory of smooth manifolds with the notion of a space of directions. Fix a point $x \in X$ and two unit-speed geodesics starting from it: $\gamma(0) = x = \eta(0)$. The angle between $\gamma$ and $\eta$ is defined as
	\[
		\angle_x(\gamma, \eta) \coloneqq \lim_{t,s \downto 0} \angle \tilde{\gamma}(t) \tilde{x} \tilde{\eta}(s),
	\]
where $\angle \tilde{\gamma}(t) \tilde{x} \tilde{\eta}(s) \in$ $[0, \pi]$ is the well-defined angle at $\tilde{x}$ of a comparison triangle $\triangle \tilde{\gamma}(t) \tilde{x} \tilde{\eta}(s)$ in $\mathbb{M}^2(\kappa)$. The angle always exists and is independent of the choice of $\kappa \in \R$ (this is due to the angles being proportional to the area of the triangle, which in turn can be bounded using its perimeter; see \cite[Lemma 6.4]{petruninfoundations}). Setting the geodesics $\gamma$ and $\eta$ to be equivalent if $\angle_x(\gamma, \eta) = 0$ defines an equivalence class and a pseudometric space $(\Sigma^{\prime}_{x}X,\angle_x)$, where $\Sigma^{\prime}_{x}X$ is the set of all nontrivial unit-speed geodesics (strictly speaking, equivalence classes of unit-speed geodesics) starting from $x$. The completion of $\Sigma^{\prime}_{x}X$ is called the space of directions at $x$ and is denoted by $\Sigma_{x}X$.

\begin{definition}[The tangent cone]
The tangent cone $(T_xX, d_{\angle_x})$ at $x \in X$ is defined as the Euclidean cone over $(\Sigma_{x}X, \angle_x)$, i.e., $T_xX \coloneqq (\Sigma_{x}X \times [0,\infty))/\sim$, where $(\gamma,t) \sim (\eta,s)$ if and only if $t=s=0$. The metric on $T_xX$ is defined as
	\[
		d_{\angle_x}((\gamma,t), (\eta,s)) \coloneqq \sqrt{t^2 + s^2 - 2ts\cos \angle_x(\gamma, \eta)}
	\]
for $(\gamma,t), (\eta,s) \in T_xX$.
\end{definition}

Similarly, the Euclidean cone over the pseudometric space $(\Sigma^{\prime}_{x}X,\angle_x)$ is denoted by $(T^{\prime}_xX, d_{\angle_x})$. The definition of the tangent cone metric allows us to define an “inner product” of two elements $(\gamma,t), (\eta,s) \in T_xX$ as
\begin{equation}
	\label{innprod}
	\langle (\gamma,t),(\eta,s) \rangle \coloneqq ts \cos \angle_x (\gamma, \eta) = \frac{t^2 + s^2 - d^2_{\angle_x}((\gamma,t), (\eta,s))}{2},
\end{equation}
where $\langle (\gamma,t),(\eta,s) \rangle \coloneqq 0$, if $t=0$ or $s=0$.

The elements of the tangent cone $T_xX$ will be called tangent vectors (despite the fact that it is a cone and not necessarily a vector space). A vector $(\gamma, t) \in T^{\prime}_xX$ with $t=d(x,y)$, corresponding to a geodesic $\gamma(t) = x\#_ty$, is called logarithm (of the geodesic $\gamma$) and is denoted by $\log_x y$ (not necessarily uniquely defined). In general, we have the following definition for velocity of curves.

\begin{definition}[{\cite[Definition 6.9]{petruninfoundations}}]
	Let $X$ be a metric space, $a>0$, and $\alpha: [0,a) \to X$ be a function, not necessarily continuous, such that $\alpha(0)=x$. We say that $v \in T_xX$ is the right derivative of $\alpha$ at $0$, briefly $\alpha^+(0)=v$, if for some (and therefore any) sequence of vectors $v_n \in T^{\prime}_xX$ such that $v_n \to v$ as $n \to \infty$, and corresponding geodesics $\gamma_n$, we have
		\[
			\limsup_{t \downto 0} \frac{d(\alpha(t), \gamma_n(t))}{t} \to 0 \quad \text{as} \quad n \to \infty.
		\]
\end{definition}

Especially, if we choose $\alpha$ to be a geodesic $\gamma(t) = x\#_t y$ and $v_n=v$ for all $n$, then $\gamma^+(0) = v = \log_x y \in T^{\prime}_xX$.

\begin{remark}
	For a Riemannian manifold $(X,d)$, the space of directions $(\Sigma_{x}X, \angle_x)$ and the tangent cone $(T_xX, d_{\angle_x})$ coincide with the unit tangent sphere and the tangent space at $x$, respectively \cite[Section 1.4]{yuburagobelow}.
\end{remark}

The definition of the angle allows for the following fundamental rule for differentiating the length of a variable curve; see \cite[8.42, 9.36]{petruninfoundations}.

\begin{theorem}[First variation formula]
	\label{firstvar}
	Let $(X,d)$ be an Alexandrov space with curvature bounded from above or below by $\kappa$, and assume that $X$ is locally compact when the curvature is bounded from below. For any point $z \in X$ and geodesic $\gamma(t) = x \#_t y$ with $x, y \in X$, and $0 < d(x,z) < \pi / \sqrt{\kappa}$ if $\kappa>0$, we have
	\[
		\lim_{t \downto 0} \frac{d_z(\gamma(t)) - d_z(x)}{t} = - \cos \theta_{\min},
	\]
	where $d_z: X \to \R : a \to d(z,a)$ and $\theta_{\min}$ is the minimum of angles between $\gamma$ and all geodesics from $x$ to $z$.
\end{theorem}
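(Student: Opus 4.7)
My plan is to reduce the claim to the identification of the limit of a Euclidean comparison angle, and then handle that limit by splitting into an easy upper bound (no curvature needed) and a harder matching lower bound (where the curvature hypothesis enters). Set $r \defeq d(x, z) > 0$ and let $\tilde\alpha(t)$ denote the angle at $\tilde{x}$ in the Euclidean ($\mathbb{M}^2(0)$) comparison triangle of $\triangle \gamma(t) x z$. The Euclidean law of cosines yields
\[
    d(\gamma(t), z) = \sqrt{t^2 + r^2 - 2 t r \cos \tilde\alpha(t)} = r - t \cos \tilde\alpha(t) + O(t^2/r),
\]
so $(d_z(\gamma(t)) - d_z(x))/t = -\cos\tilde\alpha(t) + O(t/r)$, and the theorem reduces to proving $\lim_{t\downto 0}\tilde\alpha(t) = \theta_{\min}$.

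First I would establish $\limsup_{t\downto 0}\tilde\alpha(t) \leq \theta_{\min}$, without invoking the curvature hypothesis. Fix any geodesic $\sigma$ from $x$ to $z$ and set $\theta \defeq \angle_x(\gamma, \sigma)$. For $s \in (0, r)$, the triangle inequality gives $d(\gamma(t), z) \leq d(\gamma(t), \sigma(s)) + (r - s)$. The law of cosines applied to the Euclidean comparison triangle of $\triangle \gamma(t) x \sigma(s)$ expresses $d(\gamma(t), \sigma(s))$ in terms of its comparison angle at $\tilde{x}$, which by the definition of $\angle_x(\gamma, \sigma)$ tends to $\theta$ as $s, t \downto 0$; this yields $d(\gamma(t), \sigma(s)) = s - t\cos\theta + o(t)$ for fixed small $s > 0$ and $t \downto 0$. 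Substituting into the triangle inequality gives $d(\gamma(t), z) - r \leq -t\cos\theta + o(t)$, so $\limsup_{t\downto 0}(d(\gamma(t), z) - r)/t \leq -\cos\theta$; taking the infimum over $\sigma$ produces $\limsup_{t\downto 0}\tilde\alpha(t) \leq \theta_{\min}$.

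Next I would prove $\liminf_{t\downto 0}\tilde\alpha(t) \geq \theta_{\min}$, which is where the curvature bound is essential. In the lower-bounded case, the hinge comparison (a Toponogov-type inequality in spaces of curvature $\geq \underline\kappa$) gives, for any geodesic $\sigma$ from $x$ to $z$ with $\angle_x(\gamma, \sigma) = \theta$, the lower bound $d(\gamma(t), z) \geq r - t\cos\theta + o(t)$ after expanding the model hinge distance in $\mathbb{M}^2(\underline\kappa)$ to first order in $t$. The local-compactness assumption lets me select, via an Arzelà--Ascoli argument applied to unit-speed geodesics of length $r$, a limiting $\sigma_\ast$ from $x$ to $z$ attaining $\theta_{\min}$; inserting this $\sigma_\ast$ into the hinge bound yields $\liminf_{t\downto 0}(d(\gamma(t), z) - r)/t \geq -\cos\theta_{\min}$. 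In the upper-bounded case, the hypothesis $d(x,z) < \pi/\sqrt{\overline\kappa}$ forces uniqueness of the geodesic $\sigma$ from $x$ to $z$ in the CAT$(\overline\kappa)$ setting, so $\theta_{\min} = \angle_x(\gamma, \sigma)$; the CAT$(\overline\kappa)$ inequality applied to the triangle $\gamma(t) x z$, combined with monotonicity of the $\overline\kappa$-comparison angle along $\gamma$ as $t \downto 0$, then gives $\liminf_{t\downto 0}\tilde\alpha(t) \geq \angle_x(\gamma, \sigma) = \theta_{\min}$.

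The principal obstacle is the lower-bounded case's simultaneous appeal to hinge comparison and extraction of a minimising $\sigma_\ast$: the hinge expansion must be carried out explicitly in $\mathbb{M}^2(\underline\kappa)$ (with the admissibility of the hinge ensured by $d(x,z) < \pi/\sqrt{\underline\kappa}$), and the Arzelà--Ascoli step is precisely where local compactness of $X$ is indispensable, since otherwise the infimum $\theta_{\min}$ need not be attained by any actual geodesic. A secondary but pervasive technical matter is the uniform bookkeeping of $o(t)$ remainder terms across the law-of-cosines and hinge expansions, which is routine but must be done carefully to avoid $s$-dependent remainders contaminating the limit in $t$.
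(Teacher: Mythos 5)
The paper does not prove \cref{firstvar}: it is quoted verbatim from the literature, with the text immediately preceding the statement citing \cite[8.42, 9.36]{petruninfoundations}, and the remark following it pointing out that the $\le$ inequality holds in any metric space with defined angles while equality needs the curvature bound. So there is nothing to compare your proof against in the paper; what follows is a direct review of your argument.

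Your reduction to $\lim_{t\downto 0}\tilde\alpha(t)=\theta_{\min}$, the $\limsup$ direction via the triangle inequality and the definition of angle (which, as you note and as the paper's remark confirms, needs no curvature bound), and the CAT($\overline\kappa$) half of the $\liminf$ direction (uniqueness of $\sigma$ from $d(x,z)<D_{\overline\kappa}$, then monotone non-decrease of the comparison angle $\tilde\angle_{\overline\kappa}(\gamma(t),x,\sigma(s))$ in $t,s$, so $\tilde\angle_{\overline\kappa}(\gamma(t),x,z)\ge\angle_x(\gamma,\sigma)$) are all essentially sound, modulo the $o(t)$-uniformity bookkeeping you already flagged.

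The CBB half of the $\liminf$ direction has a genuine gap: the hinge (Toponogov) comparison in spaces of curvature $\ge\underline\kappa$ goes the \emph{opposite} way from what you wrote. In CBB($\underline\kappa$), the angle comparison is $\angle_x(\gamma,\sigma)\ge\tilde\angle_{\underline\kappa}$, equivalently the comparison angle $\tilde\angle_{\underline\kappa}(\gamma(t),x,\sigma(s))$ is non-\emph{increasing} in $t,s$; passing this through the model law of cosines, a hinge at $x$ with angle $\theta=\angle_x(\gamma,\sigma)$ and side lengths $t,r$ bounds the opposite side from \emph{above}, $d(\gamma(t),z)\le\tilde d_{\underline\kappa}(t,r,\theta)=r-t\cos\theta+O(t^2)$. (Check on $S^2\subset$ CBB($0$): take $a=b=\pi/2$, $\alpha=\pi/2$; the spherical opposite side is $\pi/2$, the Euclidean is $\pi/\sqrt2>\pi/2$.) So the hinge comparison only re-derives the $\limsup\le-\cos\theta_\sigma$ bound you already have from the triangle inequality; it does not give $\liminf\ge-\cos\theta_{\min}$. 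Consequently your CBB argument proves the upper bound twice and the lower bound not at all. The lower bound in CBB is the genuinely hard part: one standard route is to take geodesics $\sigma_t$ from $\gamma(t)$ to $z$, extract a limit geodesic $\sigma_*$ from $x$ to $z$ (here local compactness really is needed), and use the angle at $\gamma(t)$ together with the non-branching and straightness of geodesics in CBB spaces, rather than a hinge estimate at $x$. The extraction of a minimising $\sigma_*$ that you describe is not where the difficulty lies; the missing ingredient is a comparison inequality that actually bounds $d(\gamma(t),z)$ from below, and the hinge inequality you invoke does not do that.
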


\begin{remark}
	The first variation inequality “$\leq$” is very general as it holds for metric spaces with defined angles \cite[6.7]{petruninfoundations} (no need for curvature bounds or compactness). This turns out to be sufficient for the trigonometric bound for the squared distance function (\cref{triglower}), whereas the other trigonometric bound (\cref{trigupper}) requires the full equality.
\end{remark}

It turns out that bilaterally bounded Alexandrov spaces have some differentiable structure of low smoothness. The dimension mentioned is the topological dimension as is implied by the use of coordinate maps.

\begin{theorem}[{\cite[Theorem 5]{berestovskii}}]
	\label{bilat}
	Let $(X,d)$ be a complete locally compact Alexandrov space with curvature bounded above by $\overline{\kappa}$ and below by $\underline{\kappa}$ ($\underline{\kappa} \leq \overline{\kappa}$). Then $X$ is finite-dimensional and admits a natural atlas of distance coordinates of class $C^1$. Furthermore, the distance on $X$ is defined by a metric of class $C^0$ (continuous).
\end{theorem}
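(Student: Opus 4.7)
The plan is to work locally near an arbitrary $x_0 \in X$ and build a chart from distance functions, following the classical Berestovskii argument. First, finite dimensionality: since $X$ is complete, locally compact, and has curvature bounded below by $\underline{\kappa}$, the Burago--Gromov--Perelman theory of spaces with a lower sectional curvature bound gives that the Hausdorff dimension is a finite integer $n$. The upper bound $\overline{\kappa}$ then forces small geodesic balls $B(x_0, r)$ with $r < D_{\overline{\kappa}}/2$ to be geodesically convex with unique connecting geodesics; in the bilateral setting these balls are homeomorphic to open balls in $\R^n$, which is the topological-manifold property underlying the Riemannian-type conclusions.

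For the chart, choose $n$ points $p_1, \ldots, p_n$ near $x_0$ whose logarithms $\log_{x_0} p_i \in T_{x_0} X$ form a basis arbitrarily close to an orthonormal frame; this is available because bilaterality equips $T_{x_0}X$ with an inner-product structure. Define $\Phi \colon U \to \R^n$ on a suitable neighbourhood $U$ of $x_0$ by
\[
    \Phi(y) \defeq (d(y, p_1), \ldots, d(y, p_n)).
\]
By the first variation formula \cref{firstvar}, each $d_{p_i}$ admits one-sided directional derivatives $-\cos \theta_{\min}$ at every $y \in U$, where $\theta_{\min}$ is the angle to the (unique, by the upper bound) geodesic from $y$ to $p_i$. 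Continuity of $y \mapsto \log_y p_i$---obtained from the combined sandwich of upper and lower comparison triangles---transfers to the Jacobian of $\Phi$, and an inverse-function argument upgrades $\Phi$ to a $C^1$ homeomorphism onto its image after shrinking $U$.

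For the metric tensor, express the pulled-back inner product through the Gram matrix
\[
    \iprod{\log_y p_i}{\log_y p_j} = \norm{\log_y p_i} \norm{\log_y p_j} \cos \angle_y(\gamma_{y,p_i}, \gamma_{y,p_j}),
\]
where $\gamma_{y,p_i}$ is the geodesic from $y$ to $p_i$. The upper and lower comparison triangles in $\mathbb{M}^2(\overline{\kappa})$ and $\mathbb{M}^2(\underline{\kappa})$ squeeze these angles continuously in $y$, so the pulled-back metric tensor is continuous in the chart. That it reproduces $d$ as its length metric then follows by integrating the first variation formula along rectifiable curves.

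The main obstacle is the $C^1$ regularity of the chart, which rests entirely on continuous dependence of $y \mapsto \log_y p_i$ on the base point. Neither bound alone is enough: the upper bound rules out geodesic branching and uniquely defines the logarithm, while the lower bound furnishes the matching comparison that squeezes the angles from the other side, forcing genuine continuity rather than mere upper semicontinuity. The same continuity statement then handles transition maps between two overlapping distance charts, showing they are $C^1$ rather than merely continuous, and thereby assembles the local pieces into a bona fide $C^1$ atlas.
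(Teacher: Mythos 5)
The paper does not prove this statement: \cref{bilat} is cited verbatim from Berestovskii \cite{berestovskii} and used as a black box (its role in the paper is to justify that a bilaterally bounded Alexandrov space carries enough differentiable structure that the exponential map and ordinary gradients make sense). There is therefore no internal proof to compare against, and it would be out of scope to reprove it here.

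As a sketch of the known Berestovskii argument, your outline has the right skeleton but the two steps that carry all the weight are asserted rather than argued. First, you invoke ``in the bilateral setting these balls are homeomorphic to open balls in $\R^n$'' as an input, but the topological-manifold property is itself a substantial part of what the theorem establishes; finite Hausdorff dimension of a locally compact CBB space (Burago--Gromov--Perelman) does not by itself give a manifold, and justifying that the CAT bound removes the singular set needs more than you write. Second, and more centrally, the entire $C^1$ claim rests on continuity of $y \mapsto \log_y p_i$, which you attribute to a ``sandwich'' of upper and lower comparison triangles. That sandwich is exactly where the real estimates live: one has to show that the one-sided angle comparisons from the two bounds pinch together uniformly on a neighbourhood, and that this pinching is strong enough to yield continuity (indeed Lipschitz continuity, which is what actually gives the $C^{1,1}$ atlas mentioned after the theorem in the paper) of the directional derivatives of $d_{p_i}$, not merely their existence. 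Without that quantitative step, ``squeeze these angles continuously'' is a restatement of the conclusion, not a proof. The finite-dimensionality step, the choice of points $p_1, \ldots, p_n$ in general position, the Gram-matrix expression for the pulled-back metric, and the handling of transition maps are all standard and correctly identified.
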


In fact, from the construction of the distance coordinates, a little more is true; the atlas is $C^{1,1}$-smooth making the components of the metric tensor $C^{0,1}$-smooth. No more regularity can be expected, however, as can be seen from the classic example of gluing a hemisphere to the top of a flat cylinder: the curvature is not continuous and hence the atlas cannot be $C^2$-smooth (such a surface is called a capsule when both ends of the cylinder are glued shut with hemispheres).

As an aside, it is important to remember that even though the distance coordinate atlas can be refined to be $C^r$ for any $1< r \leq \infty$ (due to Whitney), doing so will also alter the metric and, consequently, the behaviour of the whole algorithm altogether. It is therefore imperative to invoke the Alexandrov framework to study algorithms based on natural atlases and their metrics.

\begin{remark}
	The notion of a boundary of a finite-dimensional Alexandrov space (denoted by $\partial X$) with curvature bounded from below is required for the results regarding the gradient descent map. The boundary is defined inductively with respect to dimension and space of directions, and can be found in, for example, \cite[7.19]{yuburagobelow}. However, since we will be assuming bilateral bounds for our results, the space will have the structure of a differentiable manifold (of low regularity) and as such the boundary is defined in the standard manner with upper half-spaces.
\end{remark}

We also need the following concept of sequences satisfying a monotonicity property with an error term to analyse the convergence of optimisation algorithms in generic metric spaces. We denote $\N \coloneqq \{ 0,1,\dots \}$.

\begin{definition}
	A sequence $\{x^k\}_{k \in \N}$ in a complete metric space $(X,d)$ is called quasi-Fej{\'e}r with respect to a set $U \subset X$ if, for every $\bar{x} \in U$, there exists a sequence $\{\epsilon_k\}_{k \in \N} \subset \R$ such that $\epsilon_k \geq 0$, $\sum_{k=0}^\infty \epsilon_k < \infty$, and $d^2(x^{k+1}, \bar{x}) \leq d^2(x^k, \bar{x}) + \epsilon_k$, for all $k=0,1,\dots$.
\end{definition}

If, for every point $\bar{x} \in U$, we can choose the zero sequence $\epsilon_k = 0$ for all $k = 0,1,\dots$, then the sequence $\{x^k\}_{k \in \N}$ is simply called Fej{\'e}r (with respect to $U$). The fundamental property of quasi-Fej{\'e}r sequences is the following extension of Opial's lemma to metric spaces.

\begin{theorem}
	\label{fejer}
	Let $\{x^k\}_{k \in \N}$ be a sequence in a complete locally compact metric space $(X,d)$. If $\{x^k\}_{k \in \N}$ is quasi-Fej{\'e}r with respect to a nonempty set $U \subset X$, then $\{x^k\}_{k \in \N}$ is bounded. If furthermore, an accumulation point $x^*$ of $\{x^k\}_{k \in \N}$ belongs to $U$, then $\lim_{k \to \infty} x^k = x^*$.
\end{theorem}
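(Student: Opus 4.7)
The plan is to proceed in two stages matching the two conclusions of the theorem. For boundedness, fix any $\bar x \in U$ and let $\{\epsilon_k\}$ be the associated error sequence. Iterating the defining inequality $d^2(x^{k+1}, \bar x) \leq d^2(x^k, \bar x) + \epsilon_k$ telescopically yields
\[
    d^2(x^{k+1}, \bar x) \leq d^2(x^0, \bar x) + \sum_{j=0}^{k} \epsilon_j \leq d^2(x^0, \bar x) + \sum_{j=0}^{\infty} \epsilon_j,
\]
and the right-hand side is finite by hypothesis. Hence $\{x^k\}$ stays in a ball around $\bar x$.

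For convergence, suppose $x^* \in U$ is an accumulation point, and let $\{\epsilon_k\}$ now denote the error sequence corresponding to $\bar x = x^*$. The key device is the standard shift-by-tail trick: set
\[
    a_k \defeq d^2(x^k, x^*) + \sum_{j=k}^{\infty} \epsilon_j.
\]
Then $a_k \geq 0$, and the quasi-Fej\'er inequality for $\bar x = x^*$ gives $a_{k+1} \leq a_k$. Thus $\{a_k\}$ is monotone nonincreasing and bounded below, so it converges to some $L \geq 0$. Since $\sum_{j=k}^{\infty} \epsilon_j \to 0$ as $k \to \infty$, we conclude that $d^2(x^k, x^*) \to L$.

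It remains to identify $L$. By assumption there is a subsequence $\{x^{k_n}\}$ with $x^{k_n} \to x^*$, hence $d^2(x^{k_n}, x^*) \to 0$, so $L = 0$, which gives $x^k \to x^*$ as desired. The only minor subtlety is that the completeness and local compactness hypotheses are not used in the argument itself: they enter only insofar as they guarantee the existence of accumulation points in the bounded set identified in the first part, which is what makes the theorem applicable in practice. The proof as such rests entirely on the telescoping bound and the monotonization $a_k$, both of which are routine once the right quantity is written down.
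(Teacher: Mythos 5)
Your proof is correct, and the second half takes a genuinely different route from the paper's. For boundedness, you telescope the defining inequality exactly as the paper does. For convergence, however, the paper proceeds by a direct $\delta$-argument: it picks $N_1$ with $\sum_{i \ge N_1} \epsilon_i < \delta/2$ and a subsequence index $k_j \ge N_1$ with $d^2(x^{k_j}, x^*) < \delta/2$, then iterates the quasi-Fej\'er inequality forward from $x^{k_j}$ to bound $d^2(x^k, x^*) < \delta$ for all later $k$. You instead monotonise: setting $a_k = d^2(x^k, x^*) + \sum_{j \ge k} \epsilon_j$, the quasi-Fej\'er inequality gives $a_{k+1} \le a_k$, so $a_k$ converges, and since the tail $\sum_{j \ge k} \epsilon_j \to 0$ you get that $d^2(x^k, x^*)$ itself converges to some $L$; the subsequence $x^{k_n} \to x^*$ forces $L = 0$. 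Your version is cleaner and packages the ``eventually small error'' reasoning into one monotone auxiliary sequence rather than choosing explicit thresholds, while the paper's version is perhaps more elementary in that it never invokes monotone convergence. Your closing observation is also correct and worth stating: completeness and local compactness play no role in the argument once an accumulation point in $U$ is granted; they matter only because, in applications, boundedness plus local compactness is how one obtains such an accumulation point in the first place. (The paper in fact re-derives the existence of a convergent subsequence from local compactness inside the proof, which is redundant given that $x^*$ is already assumed to be an accumulation point.)
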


\begin{proof}
Let $\{x^k\}_{k \in \N}$ be a quasi-Fej{\'e}r sequence with respect to a nonempty set $U \subset X$ and $\bar{x} \in U$. By definition, we have $d^2(x^1,\bar{x}) \leq d^2(x^0,\bar{x}) + \epsilon_0$ so that $d^2(x^2,\bar{x}) \leq d^2(x^1,\bar{x}) + \epsilon_1 \leq d^2(x^0,\bar{x}) + \epsilon_0 + \epsilon_1$. Iterating this further we have that $d^2(x^k,\bar{x}) \leq d^2(x^0,\bar{x}) + \sum_{i=0}^{k-1}\epsilon_i \leq d^2(x^0,\bar{x}) + \sum_{i=0}^{\infty}\epsilon_i < \infty$ for all $k=0,1,\dots$ by the definition of the sequence $\{\epsilon_k\}_{k \in \N}$; that is, $\{x^k\}_{k \in \N}$ is bounded.

Now let $x^* \in U$ be an accumulation point of $\{x^k\}_{k \in \N}$. Since the sequence is bounded, by local compactness, it has a subsequence $\{ x^{k_i} \}_{i \in \N}$ such that $\lim_{i \to \infty} x_{k_i}= x^*$. Let $\delta > 0$. By definition, there exists an $N_1$ such that $\sum_{i = N_1}^{\infty}\epsilon_i < \delta/2$ and an $N_2 \geq N_1$ such that $d^2(x^{k_j},x^*) < \delta/2$ for any $j \geq N_2$. From these we get that
	\[
		d^2(x^k, x^*) \leq d^2(x^{k_j}, x^*) + \sum_{i = j}^{k-1}\epsilon_i \leq d^2(x^{k_j}, x^*) + \sum_{i = j}^{\infty}\epsilon_i < \frac{\delta}{2} + \frac{\delta}{2} = \delta,
	\]
for any $k \geq j$. Since $\delta > 0$ was arbitrary, $\lim_{k \to \infty} x^k = x^*$.
\end{proof}

\section{Convex analysis in Alexandrov spaces}
\label{sec:convex}

Let $(X,d)$ be an Alexandrov space with curvature bounded from above or below by $\kappa \in \R$. We define extended real-valued functions as $F: X \to \overbar{\R} \coloneqq \R \cup \{\infty\}$ (the codomain excludes the value $-\infty$). As it is natural to consider extended real-valued functions in convex analysis, it is also useful to specify the set in which the function attains finite values. These points constitute the effective domain of a function $F$, denoted by $\dom F$, i.e.,
\[
    \dom F \coloneqq \{ x \in X \, | \, F(x) < \infty \}.
\]
If $\dom F \neq \emptyset$, the function $F$ is called proper.

Extended real-valued $\lambda$-convex functions are defined in the usual manner.

\begin{definition}[$\lambda$-convexity]
	A function $F: X \to \overbar{\R}$ is called $\lambda$-convex for $\lambda \in \R$ if
	\[
		F(\gamma(t)) \leq (1-t)F(x) + tF(y) - \frac{\lambda}{2}t(1-t)d^2(x,y)
	\]
	holds for any $x, y \in X$, $t \in [0,1]$ and any geodesic $\gamma : [0,1] \to X$ with $\gamma(t) = x \#_t y$.
\end{definition}

Note that geodesics in Alexandrov spaces are always assumed to be minimal (distance minimising) and unit-speed (parametrised by arc length). Functions are called convex and strongly convex when $\lambda \geq 0$ and $\lambda>0$, respectively. It should also be noted that functions that are strongly convex are also strictly convex (the converse is not true however). Perhaps the most important $\lambda$-convex functions are the squared distance functions. The following proposition is an explicit formulation of work known in \cite[Chapter 2, Section 5]{Berestovskij1993multidimensional} and references therein. We define the closed metric ball of radius $r>0$ around a point $x \in X$ as: $\widebar{B}(x,r) \coloneqq \{ y \in X \, | \, d(x,y) \leq r \}$.

\begin{proposition}[{\cite[Proposition 3.1]{ohtaconvexities}}]
	\label{dsquared}
	Let $(X,d)$ be an Alexandrov space with curvature bounded from above by $\overline{\kappa} > 0$. Then, for any $a \in X$, the function $d^2_a$  is $\lambda$-convex on the geodesically convex metric ball $\widebar{B}(a,r)$ with $2r = (\pi/2 - \epsilon)/\sqrt{\overline{\kappa}}$ and $\lambda = (\pi - 2\epsilon) \tan \epsilon$ for arbitrary $\epsilon \in (0,\pi/2)$.
\end{proposition}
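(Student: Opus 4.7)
My plan is to reduce the claim, via the upper curvature bound, to a one-dimensional convexity inequality on the model space $\mathbb{M}^2(\overline\kappa)$, and then to verify it there by explicit spherical trigonometry. For any $x, y \in \overline{B}(a, r)$, the triangle perimeter
\[
    d(x,y) + d(y,a) + d(x,a) \leq 4r = (\pi - 2\epsilon)/\sqrt{\overline\kappa} < 2 D_{\overline\kappa}
\]
admits a comparison triangle $\triangle \tilde x \tilde y \tilde a \subset \mathbb{M}^2(\overline\kappa)$ with matching side lengths. Writing $\tilde\gamma(t) := \tilde x \#_t \tilde y$, the upper curvature bound yields $d(a, x \#_t y) \leq d_{\mathbb{M}^2(\overline\kappa)}(\tilde a, \tilde\gamma(t))$, while $d_{\mathbb{M}^2(\overline\kappa)}(\tilde a, \tilde x) = d(a, x)$ and similarly for $y$. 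Hence a pointwise $\lambda$-convexity estimate for the squared distance from $\tilde a$ along $\tilde\gamma$ transfers directly to the claimed inequality for $d^2_a$ in $X$.

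Next I rescale the metric by $\sqrt{\overline\kappa}$ to reduce to the unit sphere $S^2 = \mathbb{M}^2(1)$. Since both $d^2_a$ and the reference squared distance $d^2(x,y)$ in the $\lambda$-convexity inequality scale by the same factor, the admissible $\lambda$ is scale invariant, while $r$ rescales to $r' := (\pi/2 - \epsilon)/2 < \pi/4$. Parameterising the rescaled $\tilde\gamma$ by arc length $s$ and setting $\rho(s) := d(\tilde a, \tilde\gamma(s))$, the embedding $S^2 \subset \R^3$ identifies $h(s) := \cos \rho(s)$ with $\iprod{\tilde a}{\tilde\gamma(s)}$, which satisfies $h'' = -h$ because great circles obey $\tilde\gamma'' = -\tilde\gamma$. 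Implicit differentiation then yields $\rho''(s) = \cot(\rho(s))\bigl(1 - \rho'(s)^2\bigr)$ and hence
\[
    (\rho^2)''(s) = 2\rho(s) \cot \rho(s) + 2 \rho'(s)^2 \bigl(1 - \rho(s) \cot \rho(s)\bigr).
\]

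Since $\rho(s) \leq r'$ throughout $\tilde\gamma$ by geodesic convexity of $\overline{B}(\tilde a, r')$ on $S^2$ (a classical consequence of the convexity radius being $\pi/2$), and the function $\rho \mapsto \rho \cot \rho$ decreases monotonically from $1$ to $0$ on $(0, \pi/2)$, both summands on the right-hand side above are nonnegative, giving the pointwise lower bound $(\rho^2)''(s) \geq 2 r' \cot r'$. Reparameterising from arc length $s$ to $t \in [0,1]$ introduces a factor $L^2 = d^2(\tilde x, \tilde y)$, producing the sought $\lambda$-convexity inequality in the model space. Scale invariance then reverses the rescaling and the CAT($\overline\kappa$) comparison lifts the bound back to $X$.

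The main obstacle I anticipate is matching the stated constant $\lambda = (\pi - 2\epsilon)\tan\epsilon$ exactly: the natural estimate $2 r' \cot r' = (\pi/2 - \epsilon) \tan(\pi/4 + \epsilon/2)$ obtained from the worst-case $\rho'(s)^2 = 0$ requires further trigonometric rearrangement (or, equivalently, verification that $(\pi - 2\epsilon)\tan\epsilon \leq (\pi/2 - \epsilon) \tan(\pi/4 + \epsilon/2)$ for all $\epsilon \in (0, \pi/2)$, so the stated bound follows as a corollary of the sharper one) to land on the precise expression in the proposition.
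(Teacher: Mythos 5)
The paper only cites this result from Ohta~\cite[Proposition 3.1]{ohtaconvexities} without reproducing its proof, so there is no in-paper argument to compare against. Your proposal is a correct reconstruction along the standard lines: the perimeter bound $4r = (\pi-2\epsilon)/\sqrt{\overline\kappa} < 2D_{\overline\kappa}$ licenses the comparison triangle, the CAT($\overline\kappa$) inequality reduces the claim to a one-dimensional estimate on the model sphere, and the scale invariance of $\lambda$ (both $d_a^2$ and $d^2(x,y)$ scale by $1/\overline\kappa$) lets you work on $S^2$. Your spherical computation is right: from $h=\cos\rho$ satisfying $h''=-h$ one gets $\rho'' = \cot(\rho)(1-(\rho')^2)$ and hence $(\rho^2)'' = 2\rho\cot\rho + 2(\rho')^2(1-\rho\cot\rho)$; both terms are nonnegative on $(0,\pi/2)$, and the geodesic convexity of $\overline{B}(\tilde a, r')$ with $r' = \pi/4-\epsilon/2 < \pi/4$ forces $\rho\le r'$ along $\tilde\gamma$, so $(\rho^2)'' \ge 2r'\cot r'$. (The degenerate case where $\tilde\gamma$ passes through $\tilde a$ is harmless: there $\rho^2=(s-s_0)^2$ and $(\rho^2)''=2 \ge 2r'\cot r'$.) The reparametrisation from arc-length to the $[0,1]$-parameter then produces the $d^2(\tilde x,\tilde y)$ factor, and the CAT comparison lifts the inequality back to $X$.

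The one loose end is the final trigonometric comparison, which you flag but leave unverified. It closes easily: with $t=\tan(\epsilon/2)\in(0,1)$, the required inequality $2\tan\epsilon \le \tan(\pi/4+\epsilon/2)$ becomes $\frac{4t}{1-t^2} \le \frac{1+t}{1-t}$, i.e.\ $4t \le (1+t)^2$, i.e.\ $(1-t)^2\ge 0$. So your constant $2r'\cot r' = (\pi/2-\epsilon)\tan(\pi/4+\epsilon/2)$ is genuinely larger than $(\pi-2\epsilon)\tan\epsilon = 2(\pi/2-\epsilon)\cot(\pi/2-\epsilon) = 4r'\cot(2r')$ --- the latter is what the same estimate yields if one bounds $\rho$ by the diameter $2r'$ rather than the radius $r'$ --- and the stated proposition follows a fortiori. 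In short: correct, essentially the standard comparison proof, and a sharper constant than the one cited; the only thing you should add is the two-line verification of the trigonometric inequality you invoke.
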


When the space is an Alexandrov space with curvature bounded from above by $0$, the squared distance function is $2$-convex \cite[Corollary 9.26]{petruninfoundations} (in fact, these are equivalent). Moreover, since a complete Alexandrov space bounded from above by $\overline{\kappa}$, is an Alexandrov space bounded from above by $\overline{K}$, for $\overline{\kappa} < \overline{K}$ \cite[Corollary 9.18]{petruninfoundations}, we have a strong convexity factor for the squared distance function for all $\overline{\kappa} \in \R$ ($\lambda$ is given by \cref{dsquared} when $\overline{\kappa} > 0$, and $\lambda = 2$, when $\overline{\kappa} \leq 0$).

\begin{proposition}[{\cite[Lemma 3.3]{ohtawasserstein}}]
	\label{minusdsquared}
	Let $(X,d)$ be an Alexandrov space with curvature bounded from below by $\underline{\kappa} < 0$. Then, for any $a \in X$, the function $-d^2_a$ is $\lambda$-convex on the metric ball $\widebar{B}(a,r)$ with $\lambda = -2(1 - \underline{\kappa}(2r)^2)$ for all $r>0$.
\end{proposition}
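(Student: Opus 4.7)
The approach is the standard two-step reduction to the model plane. First, use the lower curvature bound to pass from $X$ to $\mathbb{M}^2(\underline{\kappa})$; second, verify the claim there by explicit trigonometric computation.

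For the first step, fix $x, y \in \overline{B}(a,r)$ and a geodesic $\gamma(t) = x \#_t y$. Since $\underline{\kappa} < 0$ we have $D_{\underline{\kappa}} = \infty$, so a comparison triangle $\triangle \tilde{a} \tilde{x} \tilde{y} \subset \mathbb{M}^2(\underline{\kappa})$ with matching side lengths always exists. The defining lower-curvature inequality yields
\[
    d(a, \gamma(t)) \geq d_{\mathbb{M}^2(\underline{\kappa})}(\tilde{a}, \tilde{\gamma}(t)),
    \qquad \tilde{\gamma}(t) := \tilde{x} \#_t \tilde{y}.
\]
Squaring and negating reverses the inequality, so together with $d(\tilde{a}, \tilde{x}) = d(a,x) \leq r$ and $d(\tilde{a}, \tilde{y}) = d(a,y) \leq r$, the problem reduces to proving the same $\lambda$-convexity statement in the model plane: for any $\tilde{a}, \tilde{x}, \tilde{y} \in \mathbb{M}^2(\underline{\kappa})$ with $d(\tilde{a}, \tilde{x}), d(\tilde{a}, \tilde{y}) \leq r$, the function $-d^2_{\tilde{a}}$ is $\lambda$-convex along $\tilde{\gamma}$.

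For the second step, set $k := \sqrt{-\underline{\kappa}}$ and apply the hyperbolic law of cosines,
\[
    \cosh(kC) = \cosh(kA)\cosh(kB) - \sinh(kA)\sinh(kB)\cos\theta,
\]
to the two triangles $\triangle \tilde{a}\tilde{x}\tilde{\gamma}(t)$ and $\triangle \tilde{a}\tilde{y}\tilde{\gamma}(t)$, which share supplementary angles at $\tilde{\gamma}(t)$. Eliminating the common angle gives a closed-form expression for $\cosh(k\,d(\tilde{a}, \tilde{\gamma}(t)))$, and hence for $d^2(\tilde{a}, \tilde{\gamma}(t))$, as a function of $t$, $d(\tilde{a},\tilde{x})$, $d(\tilde{a},\tilde{y})$ and $d(\tilde{x},\tilde{y})$. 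Expanding around the flat limit $k=0$ reproduces the Euclidean parallelogram-law identity
\[
    d^2(\tilde{a}, \tilde{\gamma}(t)) = (1-t) d^2(\tilde{a}, \tilde{x}) + t\, d^2(\tilde{a}, \tilde{y}) - t(1-t) d^2(\tilde{x}, \tilde{y})
\]
as the leading term, with the curvature correction quadratic in $k$ and quartic in the side lengths.

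The main obstacle is extracting the stated constant $1 - \underline{\kappa}(2r)^2$ from this correction, uniformly over the geometric configuration. A clean route is to differentiate $t \mapsto d^2(\tilde{a}, \tilde{\gamma}(t))$ twice along $\tilde{\gamma}$ inside $\mathbb{M}^2(\underline{\kappa})$, bound the resulting second derivative from below by $2(1 - \underline{\kappa}(2r)^2)\, d^2(\tilde{x}, \tilde{y})$ using monotonicity of $\cosh$ on $[0,2r]$ together with $A, B \leq r$ and $C \leq 2r$, and then integrate twice to recover the $\lambda$-convexity inequality with the claimed $\lambda$. An equivalent algebraic route keeps explicit Taylor remainders of $\cosh - 1$ and $\sinh$ and yields the same constant; in both cases the factor $(k\cdot 2r)^2 = -\underline{\kappa}(2r)^2$ arises naturally as the square of the maximum argument of $\cosh$ on a ball of radius $2r$.
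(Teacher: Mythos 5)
The paper does not prove \cref{minusdsquared}: it is imported verbatim from \cite[Lemma~3.3]{ohtawasserstein} and used as a black box, so there is no in-paper argument to compare against. That said, your first step is correct and standard. For $\underline{\kappa}<0$ the comparison triangle always exists, the lower curvature bound gives $d(a,\gamma(t))\geq d_{\mathbb{M}^2(\underline{\kappa})}(\tilde a,\tilde\gamma(t))$, and since the comparison triangle has matching side lengths, negating and squaring reduces the claim to proving the corresponding inequality for $-d^2_{\tilde a}$ along $\tilde\gamma$ in the model plane. Applying the hyperbolic law of cosines to the two sub-triangles sharing the supplementary angles at $\tilde\gamma(t)$ (a hyperbolic Stewart-type identity) is also a legitimate way to make the model-plane estimate explicit.

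The gap is in the direction of your second-derivative bound. Writing $f(t)=d^2(\tilde a,\tilde\gamma(t))$ and $L=d(\tilde x,\tilde y)$, the inequality you need after the reduction is $f(t)\geq (1-t)f(0)+tf(1)+\tfrac{\lambda}{2}t(1-t)L^2$ with $\lambda<0$, i.e.\ a \emph{lower} bound on $f$, which comes from an \emph{upper} bound $f''(t)\leq -\lambda L^2 = 2\bigl(1-\underline{\kappa}(2r)^2\bigr)L^2$ (then $g(t)=f(t)-(1-t)f(0)-tf(1)-\tfrac{\lambda}{2}t(1-t)L^2$ is concave and vanishes at $0$ and $1$, hence nonnegative). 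You propose to bound $f''$ \emph{from below} by the same quantity, which is both the wrong direction to close the argument and false in general: along a radial geodesic $f''\equiv 2L^2 < 2(1-\underline{\kappa}(2r)^2)L^2$ whenever $\underline{\kappa}<0$. Finally, the claim that $(2r)^2$ ``arises naturally as the square of the maximum argument of $\cosh$'' is a heuristic, not a derivation; this extraction of the constant is precisely the part of the lemma that must be carried out. (Note also that in $\mathbb{M}^2(\underline{\kappa})$ metric balls are geodesically convex, so the intermediate points satisfy $d(\tilde a,\tilde\gamma(t))\leq r$; the factor $2r$ enters only through the side $L\leq 2r$, and a Hessian-based estimate in fact yields a sharper constant than the one stated, so the cited constant reflects a looser but simpler bound.)
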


When the space is a complete Alexandrov space with curvature bounded from below by $0$, the negative squared distance function is $-2$-convex \cite[Corollary 8.24]{petruninfoundations} (in fact, these are equivalent). Moreover, since a complete Alexandrov space bounded from below by $\underline{\kappa}$, is an Alexandrov space bounded from below by $\underline{K}$, for $\underline{K} < \underline{\kappa}$ \cite[Corollary 8.33]{petruninfoundations}, we have a strong concavity factor for the squared distance function for all $\underline{\kappa} \in \R$ ($\lambda$ is given by \cref{minusdsquared} when $\underline{\kappa} < 0$, and $\lambda = -2$, when $\underline{\kappa} \geq 0$).

Due to the existence of angles in Alexandrov spaces, functions have well-defined directional derivatives.

\begin{definition}[Directional derivative and differential]\cite[Definition 3.2]{ohta2015discretetime}
	Let $X$ be an Alexandrov space and $F: X \to \overbar{\R}$. The directional derivative of $F$ at $x \in$ dom $F$ in the direction $h \in T_xX$ is defined as
	\[
		D_xF(h) \coloneqq \liminf_{(\gamma, s) \to h} \bigg\{\lim_{t \downto 0} \frac{F(\gamma(st)) - F(x)}{t}\bigg\},
	\]
where $(\gamma, s) \in \Sigma^{\prime}_{x}X \times [0,\infty) \subset T_xX$. The function $D_xF : T_xX \to \R$ is called the differential of $F$ at $x$.
\end{definition}

This limit can be shown to exist for proper, $\lambda$-convex, and lower semicontinuous functions \cite[4.1]{ohtawasserstein}, while its uniqueness is established in \cite[Proposition 6.16]{petruninfoundations} for functions with additional Lipschitz continuity. For the former functions, it immediately follows from the definition of $\lambda$-convexity that
\begin{equation}
	\label{subdiff}
	D_xF(\gamma^+(0)) \leq F(y) - F(x) - \frac{\lambda}{2}d^2(x,y),
\end{equation}
for a geodesic $\gamma(t) = x \#_t y$ and any $y \in X$. This immediately yields a Fermat principle for convex functions.

\begin{corollary}[Fermat principle]
	\label{fermat}
	Let $(X,d)$ be an Alexandrov space and $F : X \to \overbar{\R}$ proper, lower semicontinuous, and $\lambda$-convex with $\lambda \geq 0$.
    Furthermore, let  $x^* \in$ \emph{dom} $F$ and $\gamma$ be any geodesic with $\gamma(0) = x^*$. The following are equivalent:
\begin{enumerate}
	\item $D_{x^*}F(\gamma^+(0)) \geq 0$, for all $\gamma^+(0) \in T_{x^*}X$;
	\item $F(x^*) = \min_{x \in X} F(x)$.
\end{enumerate}
\end{corollary}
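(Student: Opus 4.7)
The proof plan is to handle the two implications separately; both are short consequences of the definitions and the key subgradient-type inequality \eqref{subdiff}.

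For the implication (2) $\Rightarrow$ (1), I would argue directly from the definition of the differential. If $x^*$ is a global minimiser, then for any $(\gamma, s) \in \Sigma'_{x^*} X \times [0,\infty)$ one has $F(\gamma(st)) \geq F(x^*)$ for all $t > 0$, so each difference quotient $(F(\gamma(st)) - F(x^*))/t$ is nonnegative, and hence so is the inner limit and the outer $\liminf$ defining $D_{x^*}F(\gamma^+(0))$. (The case $s = 0$ is trivial since $D_{x^*}F$ vanishes at the apex of the cone.)

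For the reverse implication (1) $\Rightarrow$ (2), take an arbitrary $y \in X$. If $y \notin \dom F$, then $F(y) = \infty \geq F(x^*)$ and there is nothing to prove. If $y \in \dom F$, then since $X$ is geodesic there exists a unit-speed geodesic $\gamma$ with $\gamma(t) = x^* \#_t y$. Plugging $x = x^*$ and this $\gamma$ into \eqref{subdiff} gives
\[
    D_{x^*}F(\gamma^+(0)) \leq F(y) - F(x^*) - \frac{\lambda}{2} d^2(x^*, y).
\]
By assumption the left-hand side is nonnegative, and since $\lambda \geq 0$ the correction term on the right is also nonnegative, so we conclude $F(y) \geq F(x^*) + \frac{\lambda}{2} d^2(x^*, y) \geq F(x^*)$. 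As $y \in X$ was arbitrary, $x^*$ is a global minimiser.

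Neither direction is really an obstacle; the only subtlety worth flagging is to ensure that \eqref{subdiff} is legitimately applicable, i.e., that $x^* \in \dom F$ (given), that a geodesic $x^* \#_t y$ exists (from $X$ being geodesic), and that the directional derivative along such a geodesic is defined (guaranteed by the cited existence result \cite[4.1]{ohtawasserstein} for proper, lower semicontinuous, $\lambda$-convex functions). With those in hand, the argument is a one-line manipulation of \eqref{subdiff}.
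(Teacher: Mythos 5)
Your proof is correct and follows essentially the same two-step argument as the paper: applying \eqref{subdiff} together with $\lambda \geq 0$ for (1)\,$\Rightarrow$\,(2), and examining nonnegativity of the difference quotients defining $D_{x^*}F$ for (2)\,$\Rightarrow$\,(1). The minor additions you make (handling $y \notin \dom F$ explicitly and flagging the existence of the directional derivative) are sound but not new ideas.
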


\begin{proof}
Let $D_{x^*}F(\gamma^+(0)) \geq 0$, for any geodesic $\gamma(t) = x^* \#_t y$. Directly from \cref{subdiff} and $\lambda \geq 0$ we get
	\[
		0 \leq D_{x^*}F(\gamma^+(0)) \leq F(y) - F(x^*) - \frac{\lambda}{2}d^2(x^*,y) \leq F(y) - F(x^*) \quad \text{for all} \; y \in X,
	\]
i.e., $F(x^*) \leq F(y)$ for all $y \in X$.

Now let $F(x^*) = \min_{x \in X} F(x)$. This implies $F(\gamma(t)) \geq F(x^*)$, for any $y \in X$ with $\gamma(t) = x^* \#_t y$ and $t \in [0,1]$; that is
	\[
		\frac{F(\gamma(t)) - F(x^*)}{t} \geq 0,
	\]
for small $t>0$. Taking $t \downto 0$ yields the claim.
\end{proof}

We also need the following definition of the descending absolute gradient of $F$.

\begin{definition}[Absolute gradients]\cite[Definition 3.1]{ohta2015discretetime}
	The descending absolute gradient of $F: X \to \overbar{\R}$ at $x \in$ dom $F$ is defined by
	\[
		|\nabla_-F|(x) \coloneqq \max \bigg\{ 0, \limsup_{y \to x} \frac{F(x)-F(y)}{d(x,y)} \bigg\}.
	\]
\end{definition}

It holds that $|\nabla_-F|(x) \in [0,\infty]$ and $|\nabla_-F|(x) \leq L$ if $F$ is $L$-Lipschitz. A point $x$ is called critical if $|\nabla_-F|(x) = 0$, which holds if $F(x) = \inf_{y \in X} F(y)$.

It can be shown in Alexandrov spaces bounded from below, that for a lower semicontinuous, $\lambda$-convex function $F$ with $0 < |\nabla_-F|(x) < \infty$, there exists a unique direction $\alpha \in \Sigma_{x}X$ such that $D_xF((\alpha,1)) = -|\nabla_-F|(x)$ and
\begin{equation}
	\label{optimalitygrad}
	D_xF((\beta,1)) \geq -|\nabla_-F|(x) \langle (\alpha,1), (\beta,1) \rangle
\end{equation}
for all $\beta \in \Sigma_{x}X$ \cite[Lemma 4.3]{ohtawasserstein}. As such, the tangent vector
\begin{equation}
	\label{nabla}
	\nabla (-F)(x) \coloneqq (\alpha, |\nabla_-F|(x)) \in T_xX
\end{equation}
can be thought as a subgradient vector of $-F$ at $x$. This will be used in the definition of the gradient descent map in the lower curvature bound case. Normally, we would use the descent directions of $F$ but since the tangent cone $T_xX$ is not (in general) a vector space, the negative of the gradient may not exist. Instead, we will use a subgradient of negative $F$ .

\subsection{Trigonometric bounds}
\label{subsec:trig}

Using \cref{minusdsquared,dsquared}, we can now derive some very useful trigonometric bounds related to the squared distance functions; effectively generalisations of the law of cosine. By \cref{minusdsquared}, in a complete Alexandrov space with curvature bounded from below by $\underline{\kappa} < 0$ (note that for $\underline{\kappa} \geq 0$, we can choose $\lambda = -2$ as is mentioned after \cref{minusdsquared}), the negative distance squared function is $\lambda$-convex with $\lambda < 0$:
	\[
		-d^2(z,\gamma(t)) \leq -(1-t)d^2(z,x) - td^2(z,y) - \frac{\lambda}{2}t(1-t)d^2(x,y),
	\]
where $\gamma(t) = x \#_t y$ is a geodesic and $z \in X$ with $d(z,x)>0$. This gives us
	\[
		-\frac{d^2(z,\gamma(t)) - d^2(z,x)}{t} \leq d^2(z,x) - d^2(z,y) - \frac{\lambda}{2}(1-t)d^2(x,y),
	\]
which by taking $t \downto 0$ yields
	\[
		-D_x(d^2(z,x))(\gamma^+(0)) \leq d^2(z,x) - d^2(z,y) - \frac{\lambda}{2}d^2(x,y).
	\]
The differential exists (as $d^2_z$ is also proper and continuous) and the first variation formula \cref{firstvar} gives $D_x(d^2(z,x))(\gamma^+(0)) \leq -2 \langle \gamma^+(0), \eta^+(0) \rangle$, where $\eta : [0,1] \to X$ is the geodesic from $x$ to $z$ that realises the minimum angle between $\gamma$ and all geodesics from $x$ to $z$. The definition of the inner product \cref{innprod} further gives $-2 \langle \gamma^+(0), \eta^+(0) \rangle = -2d(x,y)d(x,z) \cos \angle_x (\gamma,\eta)$ so by rearranging we finally get
	\[
		d^2(z,y) \leq d^2(z,x) - \frac{\lambda}{2}d^2(x,y) - 2d(x,y)d(x,z) \cos \angle_x (\gamma,\eta),
	\]
which is, in fact, nothing but a modified cosine rule in disguise (since $\lambda <0$). The factor $\lambda/2$ can be given a more explicit expression as has been done in \cite{zhangfirstorder}.

\begin{lemma}[{\cite[Lemma 6]{zhangfirstorder}}]
	\label{triglower}
	If $a, b, c$ are the sides (i.e., side lengths) of a geodesic triangle in an Alexandrov space with curvature bounded below by $\underline{\kappa} \in \R$, and $\theta$ is the angle between sides $b$ and $c$, then
	\[
		a^2 \leq \frac{\sqrt{|\underline{\kappa}|}c}{\tanh(\sqrt{|\underline{\kappa}|}c)} b^2 + c^2 - 2bc \cos(\theta).
	\]
\end{lemma}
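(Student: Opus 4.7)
The argument is a classical Toponogov angle-comparison reduction.

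\emph{Case $\underline{\kappa}\geq 0$.} The space is a fortiori an Alexandrov space with curvature bounded below by $0$, so the standard Euclidean angle comparison gives $a^2\leq b^2+c^2-2bc\cos\theta$. Since $\tanh(t)\leq t$ for all $t\geq 0$, we have $\sqrt{|\underline{\kappa}|}\,c/\tanh(\sqrt{|\underline{\kappa}|}\,c)\geq 1$, and enlarging the coefficient of $b^2$ yields the claim.

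\emph{Case $\underline{\kappa}<0$.} Form a comparison triangle in $\mathbb{M}^2(\underline{\kappa})$ with side lengths $a,b,c$, and let $\tilde\theta$ be its angle between the sides of lengths $b$ and $c$. The lower curvature bound yields the Toponogov angle comparison $\theta\geq\tilde\theta$, whence $-2bc\cos\theta\geq-2bc\cos\tilde\theta$; it therefore suffices to prove the inequality with $\tilde\theta$ in place of $\theta$. Setting $k\coloneqq\sqrt{-\underline{\kappa}}$, the hyperbolic law of cosines in $\mathbb{M}^2(\underline{\kappa})$ gives
\[
\cosh(ka)=\cosh(kb)\cosh(kc)-\sinh(kb)\sinh(kc)\cos\tilde\theta.
\]
Solving for $\cos\tilde\theta$ and substituting into the target reduces the claim to the purely analytic inequality
\[
a^2+\frac{2bc\bigl[\cosh(kb)\cosh(kc)-\cosh(ka)\bigr]}{\sinh(kb)\sinh(kc)}\leq\frac{kc}{\tanh(kc)}\,b^2+c^2
\]
in the nonnegative variables $ka,kb,kc$. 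This in turn is verified using the convexity bound $\cosh(x)-1\geq x^2/2$, the estimate $\sinh(x)\leq x\cosh(x)$, and the identity $kc/\tanh(kc)=kc\cosh(kc)/\sinh(kc)$, from which the stated coefficient of $b^2$ emerges.

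The main obstacle is this final analytic step; the reduction to a planar hyperbolic comparison triangle is routine. The hyperbolic-function manipulations producing exactly the factor $kc/\tanh(kc)$ are best arranged as a one-variable monotonicity check: fix $kc$, use the law of cosines to eliminate one of the remaining variables, and verify the sign of the resulting derivative. As an aside, the choice of coefficient can also be motivated by the fact that $-d_z^2$ on $\mathbb{M}^2(\underline{\kappa})$ admits, along a radial geodesic issuing from $x$, a sharp $\lambda$-convexity modulus $\lambda=-2kc/\tanh(kc)$ at $c=d(x,z)$; inserting this into the derivation preceding the lemma (via \eqref{subdiff} and \cref{firstvar}) would reproduce the same factor, in place of the coarser estimate of \cref{minusdsquared}.
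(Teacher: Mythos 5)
\Cref{triglower} is quoted in the paper from \cite[Lemma 6]{zhangfirstorder} without a proof; the surrounding text only derives a version with a coarser, ball--dependent constant via \cref{minusdsquared}, and then points to Zhang and Sra for the explicit factor. So there is no paper proof to match, and your argument must stand on its own.

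The reduction you give is correct and standard: for $\underline\kappa\geq 0$ the Euclidean comparison together with $t/\tanh t\geq 1$ gives the claim at once, and for $\underline\kappa<0$ the Alexandrov/Toponogov angle comparison $\theta\geq\tilde\theta$ correctly reduces the statement to a planar hyperbolic inequality. The gap is the final analytic step, which you explicitly acknowledge but do not carry out, and which is where the entire content of \cite[Lemma 6]{zhangfirstorder} lives. The three ingredients you list ($\cosh x-1\geq x^2/2$, $\sinh x\leq x\cosh x$, and $kc/\tanh(kc)=kc\cosh(kc)/\sinh(kc)$) do not visibly combine into a proof, and the suggested ``one-variable monotonicity check'' is not routine. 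Concretely: fix $c$ and $\tilde\theta$, treat $a$ as a function of $b$ via the law of cosines, and one computes $(a^2)''(b)=2(a')^2+2\,ka\coth(ka)\bigl(1-(a')^2\bigr)$, a convex combination of $2$ and $2ka\coth(ka)$. Its sharp pointwise upper bound is therefore $2ka\coth(ka)$, not the desired $2kc\coth(kc)$; since $a$ in general exceeds $c$ along the sweep, a pointwise second-derivative comparison against $\zeta b^2+c^2-2bc\cos\tilde\theta$ fails, and an integrated or rearranged estimate is needed. Your closing aside suffers the same defect: the modulus $\lambda=-2kc/\tanh(kc)$ is a valid $\lambda$-convexity modulus for $-d_z^2$ only on the region where $d_z\leq c$, whereas the geodesic $x\#_t y$ need not remain in that ball; feeding this ``local'' modulus into \eqref{subdiff}, which presupposes $\lambda$-convexity along the entire segment, is not justified. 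Until the hyperbolic inequality is actually verified, the proof is incomplete.
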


That is, the factor $-\lambda/2$ can be chosen to be $\sqrt{|\underline{\kappa}|}d(x,z)/\tanh(\sqrt{|\underline{\kappa}|}d(x,z))$. Keeping the curvature $\underline{\kappa} < 0$ fixed, we see that $\sqrt{-\underline{\kappa}}d(x,z)/\tanh(\sqrt{-\underline{\kappa}}d(x,z))$ is an increasing function of the side length $c = d(x,z)$. In particular, if $x,z \in A \subset X$ with $\diam A < \infty$, we have that $c \leq \diam A$ and $\sqrt{-\underline{\kappa}}d(x,z)/\tanh(\sqrt{-\underline{\kappa}}d(x,z)) \leq \lambda_l/2$ for
\begin{equation}
	\label{triglowlambda}
	\frac{\lambda_l}{2} \coloneqq \frac{\sqrt{-\underline{\kappa}}\diam A}{\tanh(\sqrt{-\underline{\kappa}}\diam A)}.
\end{equation}

\begin{remark}
	Although the lemma is stated for all $\underline{\kappa} \in \R$, we will only use it for $\underline{\kappa} < 0$ and choose $\lambda_l = 2$, when $\underline{\kappa} \geq 0$.
\end{remark}

Repeating the same arguments with \cref{dsquared}, in the case of a complete Alexandrov spaces with curvature bounded from above by $\overline{\kappa} > 0$, we get
\begin{equation}
    \label{eq:pythagoras-replacement}
    d^2(z,y) \geq d^2(z,x) + \frac{\lambda}{2}d^2(x,y) - 2d(x,y)d(x,z) \cos \angle_x (\gamma,\eta),
\end{equation}
where $\lambda > 0$. The factor $\lambda/2$ can be chosen to be $\sqrt{\overline{\kappa}}d(x,z)/\tan(\sqrt{\overline{\kappa}}d(x,z))$, for $\overline{\kappa} > 0$. (As mentioned after \cref{dsquared}, for $\overline{\kappa} \leq 0$, we can choose $\lambda = 2$.)

\begin{lemma}
	\label{trigupper}
	Let $(X,d)$ be an Alexandrov space with curvature bounded from above by $\overline{\kappa} > 0$ and $a, b, c$ the sides (i.e., side lengths) of a geodesic triangle with vertices and sides in a geodesically convex set $A \subset X$ with $\diam A < \pi/(2\sqrt{\overline{\kappa}})$. For the angle $\theta$ between sides $b$ and $c$ we have
	\[
		a^2 \geq \frac{\sqrt{\overline{\kappa}}c}{\tan(\sqrt{\overline{\kappa}}c)} b^2 + c^2 - 2bc \cos(\theta).
	\]
	For curvature bounded above by $\overline{\kappa} \leq 0$, we can choose $(\sqrt{\overline{\kappa}}c)/\tan(\sqrt{\overline{\kappa}}c) = 1$.
\end{lemma}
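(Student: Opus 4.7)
The plan is to mirror the derivation of Lemma \ref{triglower} laid out immediately above the statement: apply the $\lambda$-convexity of $d_z^2$ from Proposition \ref{dsquared}, take the right derivative at $x$, and rewrite it via the first variation formula (Theorem \ref{firstvar}) together with the tangent-cone inner product \eqref{innprod}. The only new ingredient beyond that calculation is pinning down the sharp convexity factor $\lambda/2 = \sqrt{\overline\kappa}\,c/\tan(\sqrt{\overline\kappa}\,c)$.

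I would label the triangle so that $b = d(x,y)$, $c = d(x,z)$, $a = d(y,z)$, and take $\gamma(t) = x \#_t y$ and $\eta(t) = x \#_t z$. Since $A$ is geodesically convex with $\diam A < \pi/(2\sqrt{\overline\kappa})$, the CAT($\overline\kappa$) axiom forces these geodesics to be unique and to remain in $A$, giving a well-defined angle $\theta = \angle_x(\gamma, \eta)$. Applying the $\lambda$-convexity inequality for $d_z^2$ along $\gamma$, rearranging, and sending $t \downto 0$ produces $D_x(d_z^2)(\gamma^+(0)) \leq a^2 - c^2 - \tfrac{1}{2}\lambda b^2$. By Theorem \ref{firstvar} (with $\theta_{\min} = \theta$ since $\eta$ is unique) combined with the chain rule and \eqref{innprod}, the left-hand side equals $-2bc\cos\theta$, so $a^2 \geq c^2 + \tfrac{1}{2}\lambda b^2 - 2bc\cos\theta$---exactly the general cosine-type bound written out in the text preceding the lemma.

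To identify the explicit $\lambda$ I would compute the Hessian of $d_z^2$ on the model sphere $\mathbb{M}^2(\overline\kappa)$ via the spherical law of cosines. Its two eigenvalues at a point $x$ with $d(z,x)=c < \pi/(2\sqrt{\overline\kappa})$ are $2$ (radial direction) and $2\sqrt{\overline\kappa}\,c/\tan(\sqrt{\overline\kappa}\,c)$ (angular direction); the sharp convexity constant is the minimum, i.e.\ the angular one, and it is positive precisely because $c$ stays below $\pi/(2\sqrt{\overline\kappa})$. The CAT($\overline\kappa$) axiom then transfers this sharp local second-order estimate from the model sphere to $X$, as distances to $z$ in $X$ are no larger than in the comparison triangle. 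The case $\overline\kappa \leq 0$ is handled by the $2$-convexity of $d_z^2$ noted after Proposition \ref{dsquared}, with the natural limiting convention $\sqrt{\overline\kappa}\,c/\tan(\sqrt{\overline\kappa}\,c) := 1$.

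The main obstacle is precisely this sharp identification of $\lambda$: Proposition \ref{dsquared} as quoted only supplies a conservative factor depending on an auxiliary $\epsilon$ and is not strong enough on its own. A cleaner alternative that sidesteps the sphere-Hessian computation is to work directly on the comparison triangle in $\mathbb{M}^2(\overline\kappa)$: from the spherical law of cosines combined with the CAT($\overline\kappa$) angle bound $\theta \leq \tilde\theta$, the claimed polynomial inequality can be extracted by purely trigonometric manipulation, in direct analogy with Zhang's proof of Lemma \ref{triglower} in \cite{zhangfirstorder}.
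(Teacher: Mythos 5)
Your derivation of the cosine-law inequality (first two paragraphs) matches the paper's preparatory discussion in \cref{subsec:trig} exactly: apply the $\lambda$-convexity of $d_z^2$ from \cref{dsquared} along a geodesic, take the right derivative, and use \cref{firstvar} and \eqref{innprod} to rewrite the directional derivative as $-2bc\cos\theta$. That part is fine and is precisely what the paper says "the previous discussion" handles.

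Where you go astray is the third paragraph. You assert that \cref{dsquared} \emph{only supplies a conservative factor depending on an auxiliary $\epsilon$ and is not strong enough on its own}, and you therefore reach for a Hessian computation on the model sphere (or, alternatively, a direct spherical-law-of-cosines argument \`a la \cite{zhangfirstorder}). This misses the one observation the paper's proof actually consists of: the $\epsilon$-parametrised constant in \cref{dsquared} \emph{is already} the sharp form, after a one-line trigonometric simplification. With $\epsilon = \pi/2 - 2r\sqrt{\overline{\kappa}}$ one has $\pi - 2\epsilon = 4r\sqrt{\overline{\kappa}}$ and $\tan\epsilon = \cot(2r\sqrt{\overline{\kappa}})$, so
\[
\frac{\lambda}{2} = \frac{(\pi-2\epsilon)\tan\epsilon}{2}
= \frac{2r\sqrt{\overline{\kappa}}}{\tan(2r\sqrt{\overline{\kappa}})},
\]
which, identifying $c = 2r$, is exactly the coefficient $\sqrt{\overline{\kappa}}c/\tan(\sqrt{\overline{\kappa}}c)$ in the statement; letting $\epsilon$ range over $(0,\pi/2)$ makes $2r$ range over $(0,\pi/(2\sqrt{\overline{\kappa}}))$, covering all admissible side lengths. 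No Hessian or law-of-cosines computation is needed. Your alternatives are mathematically legitimate routes (and are essentially what \cite{zhangfirstorder} does for the lower bound), but they are a detour: the key insight you were missing is that \cref{dsquared} is not merely "conservative" --- its constant simplifies closed-form to the desired expression once $\epsilon$ is eliminated via the cotangent identity.
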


\begin{proof}
	From the previous discussion, we only need to show that the factor $\lambda/2$ can be chosen to be $\sqrt{\overline{\kappa}}c/\tan(\sqrt{\overline{\kappa}}c)$, for $\overline{\kappa} > 0$ and some side length $c$, with $c < \pi/(2\sqrt{\overline{\kappa}})$. By \cref{dsquared}, we have $\epsilon = \pi/2 - 2r\sqrt{\overline{\kappa}}$ and
		\[
			\frac{\lambda}{2} = \frac{(\pi - 2\epsilon) \tan \epsilon}{2} = 2r\sqrt{\overline{\kappa}} \tan (\frac{\pi}{2} - 2r\sqrt{\overline{\kappa}}) = \frac{2r\sqrt{\overline{\kappa}}}{\tan (2r\sqrt{\overline{\kappa}})},
		\]
	for ${\overline{\kappa}}>0$ and an arbitrary $\epsilon \in (0,\pi/2)$. Since for any $\epsilon \in (0,\pi/2)$ we have $2r < \pi/(2\sqrt{\overline{\kappa}})$, the claim follows.
\end{proof}

Again, keeping the curvature $\overline{\kappa}$ fixed, we see that $\sqrt{\overline{\kappa}}c/\tan(\sqrt{\overline{\kappa}}c)$ is a decreasing function of the side length $c = d(x,z)$. That is, if $x,z \in A \subset X$ with $\diam A < \pi/(2\sqrt{\overline{\kappa}})$, we have that $c \leq$ $\diam A$ and $\sqrt{\overline{\kappa}}c/\tan(\sqrt{\overline{\kappa}}c) \geq \lambda_u/2$ for
\begin{equation}
	\label{triguplambda}
	\frac{\lambda_u}{2} \coloneqq \frac{\sqrt{\overline{\kappa}}\diam A}{\tan(\sqrt{\overline{\kappa}}\diam A)}.
\end{equation}

These bounds allow us to write the key lemmas related to the proximal, gradient, and proximal gradient methods in an explicit manner.

\begin{remark}
	The assumption on the diameter of the subset is an important standard assumption in convex analysis in positively curved spaces as it forces the set to be convex \cite[Corollary 9.27]{petruninfoundations}. In the case of Riemannian manifolds (of sufficient smoothness), this also means that the exponential map as well as its inverse is uniquely defined in the set.
    Practically, convergence may be observed in a larger set.
\end{remark}

\section{Proximal and gradient descent maps}
\label{sec:prox-and-grad}

To analyse the proximal gradient map, we need to analyse the proximal and gradient descent maps seperately first. It turns out that the curvature bound determines which method is more appropriate for the analysis of the gradient flow; the proximal map behaves reasonabley in spaces with upper curvature bounds, while the gradient descent map is suitable in the lower bounded case. This would also imply that a bilateral bounded Alexandrov space is appropriate for the analysis of the proximal gradient map.

\subsection{The proximal map}

Let $(X,d)$ be a complete Alexandrov space with curvature bounded from above by $\overline{\kappa} \in \R$ and $G: X \to \overbar{\R}$ a proper, convex, and lower semicontinuous function. Fix the step size $\tau > 0$ and a closed, (geodesically) convex set $A \subset X$ ($\diam A < \pi/(2\sqrt{\overline{\kappa}})$, if $\overline{\kappa} > 0$) containing a nonempty sublevel set of $G$.

\begin{definition}[Constrained proximal map {\cite[Definition 4.1]{ohta2015discretetime}}]
	\label{def:proxconstrained}
	For each $x \in X$, we define the constrained proximal map
	\[
		\prox_{\tau G}^A : X \to X, \qquad \prox_{\tau G}^A(x) \coloneqq \argmin_{y \in A} \bigg\{G(y) + \frac{1}{2\tau}d^2(x,y) \bigg\}.
	\]
\end{definition}

\begin{remark}
	It is important to note the nonstandard restriction to the set $A$ in the definition of the proximal map. We will in \cref{sec:proxgrad} instead work with standard unrestricted proximal maps.
    To remedy the lack of this restriction in our convergence proofs, we will instead perform locality analysis and restrict the step sizes to not stray too far from the reference points. We will do this for the forward-backward method but one could just as well do it for the standard proximal map (with just an upper curvature bound and without the gradient step) and get similar results.
\end{remark}

\begin{remark}
	In nonpositively curved metric spaces, the proximal map and its corresponding evaluation map were first studied in \cite{Jost1995nonpositivecurvature} (here called Moreau-Yosida approximation) and independently in \cite{mayergradientflowsnonpositively}. In particular, they show that the method converges to a minimiser of $G$. The theory was further developed in \cite{jostnonlineardirichlet}. For consistency, we use the formulation used in \cite{ohta2015discretetime}.
\end{remark}

It can be shown that the proximal map is uniquely determined if $G$ is $\lambda$-convex for $\lambda \geq 0$. This follows from the strong convexity of the squared distance function ($\lambda$-convex for $\lambda > 0$) and hence from the strong convexity of the whole expression. The following fundamental lemma is key in the analysis of the proximal point method.

\begin{lemma}[{\cite[Lemma 4.6(I)]{ohta2015discretetime}}]
    \label{key1}
    Let $(X,d)$ be a complete Alexandrov space with curvature bounded from above by $\overline{\kappa} > 0$ and $\diam A < \pi/(2\sqrt{\overline{\kappa}})$. Then we have
	\[
		d^2(y, \prox_{\tau G}^A(x)) \leq d^2(y,x) + 2\tau (G(y) - G(\prox_{\tau G}^A(x)))
        \quad\text{for all}\quad
        x, y \in A.
    \]
\end{lemma}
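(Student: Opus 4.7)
My plan is to derive a first-order optimality condition at $z \coloneqq \prox_{\tau G}^A(x)$ and convert it into the desired squared-distance inequality using the trigonometric bound of \cref{trigupper}. Fix $y \in A$; by geodesic convexity of $A$, there is a geodesic $\gamma \colon [0,1] \to A$ with $\gamma(0) = z$ and $\gamma(1) = y$, so all arguments remain inside $A$.

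Applying the argument from the proof of \cref{fermat} along $\gamma$ to $F \coloneqq G + \frac{1}{2\tau}d^2(x,\freevar)$---noting that $F(\gamma(t)) \geq F(z)$ for all $t \in [0,1]$ because $\gamma$ stays in $A$ and $z$ minimises $F$ over $A$---yields
\[
    0 \leq \lim_{t \downto 0} \frac{F(\gamma(t)) - F(z)}{t} = \lim_{t \downto 0} \frac{G(\gamma(t)) - G(z)}{t} + \frac{1}{2\tau} \lim_{t \downto 0} \frac{d^2(x,\gamma(t)) - d^2(x,z)}{t}.
\]
Convexity of $G$ bounds the first limit by $G(y) - G(z)$ via \eqref{subdiff}, and the chain rule combined with the first variation formula \cref{firstvar} evaluates the second limit as $-2 d(x,z)\, d(z,y) \cos\theta_{\min}$, where $\theta_{\min}$ is the minimum angle at $z$ between $\gamma$ and geodesics from $z$ to $x$. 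Rearranging gives
\[
    2\tau(G(y) - G(z)) \geq 2 d(x,z)\, d(z,y) \cos\theta_{\min}.
\]

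I would then close the argument by applying \cref{trigupper} to any geodesic triangle in $A$ with vertices $x, y, z$ whose side from $z$ to $y$ is $\gamma$. Identifying $a = d(x,y)$, $b = d(z,x)$, $c = d(z,y)$, and letting $\theta$ denote the angle at $z$ in this triangle, the lemma gives
\[
    d^2(x,y) \geq \frac{\sqrt{\overline\kappa}\, d(z,y)}{\tan(\sqrt{\overline\kappa}\, d(z,y))}\, d^2(z,x) + d^2(z,y) - 2 d(z,x)\, d(z,y) \cos\theta.
\]
The diameter hypothesis forces $\sqrt{\overline\kappa}\, d(z,y) < \pi/2$, making the coefficient of $d^2(z,x)$ nonnegative so that it may be dropped. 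Since $\theta \geq \theta_{\min}$ and cosine is decreasing on $[0,\pi]$, we have $\cos\theta \leq \cos\theta_{\min}$; chaining with the optimality inequality produces $d^2(z,y) \leq d^2(x,y) + 2\tau(G(y) - G(z))$, which is the claim.

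The step I expect to require the most care is the directional-derivative computation: the $[0,1]$-parameterisation of $\gamma$ must be threaded correctly through the first variation formula (stated for unit-speed geodesics) to produce the factors $2 d(x,z)$ and $d(z,y)$ that match the trigonometric bound exactly. The degenerate case $d(x,z) = 0$, where the first variation formula does not strictly apply, has to be handled separately: then $z = x$ and the bound reduces to $G(y) \geq G(z)$, which follows from $z$ minimising $F$ on $A$ together with $d^2(x,\freevar)$ attaining its minimum at $z$.
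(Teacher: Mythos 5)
The paper does not prove \cref{key1} itself---it is cited directly from \cite[Lemma~4.6(I)]{ohta2015discretetime}---but it does prove the strengthened variant \cref{key1new}, and your argument is essentially the same as that proof: you derive the first-order optimality condition at $z=\prox_{\tau G}^A(x)$ (which is precisely \cref{lemma:proxopt} in the paper, obtained via the Fermat principle and the first variation formula), bound the resulting inner-product term by convexity of $G$, and then close via the trigonometric estimate \cref{trigupper}. The one difference is that you discard the coefficient $\frac{\sqrt{\overline\kappa}\,c}{\tan(\sqrt{\overline\kappa}\,c)}\,d^2(z,x)$ as nonnegative; if you instead keep it and lower-bound it by $\frac{\lambda_u}{2}d^2(z,x)$ using \eqref{triguplambda}, you recover \cref{key1new} at no extra cost. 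Both the chain-rule step through the unit-speed parameterisation and the degenerate case $z=x$ are handled; note though that the sentence justifying $G(z)\le G(y)$ when $z=x$ is a little loose---what is really used is that the directional derivative of $d^2(x,\freevar)$ \emph{vanishes} (not merely is nonnegative) at its minimiser $x$, so that $D_z(G+\tfrac{1}{2\tau}d_x^2)(\gamma^+(0))\ge 0$ forces $D_zG(\gamma^+(0))\ge 0$, whence $G(z)\le G(y)$ by \eqref{subdiff}; the mere fact that both $F$ and $d^2(x,\freevar)$ attain their minima at $z$ would not by itself bound $G$.
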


We can consider the algorithm generated by the proximal map by taking an arbitrary starting point $x^0 \in A$ and iterating
\begin{equation}
	\label{proxiter}
	x^{k+1} = \prox_{\tau_k G}^A(x^k), \quad k \geq 0,
\end{equation}
for an appropriate positive sequence $\{ \tau_k \}_{k \in \N}$ of step sizes.
Using \cref{key1}, the convergence of the function values $G(x^k)$ to the infimum is shown in \cite[Theorem 5.1]{ohta2015discretetime} for a positive step size sequence $\{ \tau_k \}_{k \in \N}$ with $\sum_{k=0}^\infty \tau_k = \infty$. Let us also show that the sequence of points with the same step sizes converges to a minimiser of $G$ when $X$ is assumed locally compact.

\begin{theorem}
    Let $(X,d)$ be a complete locally compact Alexandrov space with curvature bounded from above by $\overline{\kappa} \in \R$. Let $G$ be lower semicontinuous and $\{ x^k \}_{k \in \N}$ be the sequence generated by \cref{proxiter} with $\sum_{k=0}^\infty \tau_k = \infty$. If the minimiser set $U^*$ of $G$ is nonempty, then $\lim_{k \to \infty} x^k = x^*$ and $x^* \in U^*$.
\end{theorem}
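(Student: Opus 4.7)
The plan is to combine \cref{key1} with the quasi-Fej{\'e}r convergence theorem \cref{fejer}, using as the auxiliary input the function-value convergence result \cite[Theorem 5.1]{ohta2015discretetime} that was alluded to just before the statement.

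First, I would show that $\{x^k\}_{k \in \N}$ is Fej{\'e}r (not just quasi-Fej{\'e}r) with respect to $U^*$. Since $A$ contains a nonempty sublevel set of $G$ and every minimiser belongs to every such sublevel set, we have $U^* \subset A$. For any $\bar{x} \in U^*$, \cref{key1} applied with $y = \bar{x}$ and $x = x^k$ yields
\[
    d^2(\bar{x}, x^{k+1}) \leq d^2(\bar{x}, x^k) + 2\tau_k\bigl(G(\bar{x}) - G(x^{k+1})\bigr) \leq d^2(\bar{x}, x^k),
\]
where the final inequality uses $G(\bar{x}) = \inf_{y \in X} G(y) \leq G(x^{k+1})$. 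The case $\overline{\kappa} \leq 0$ is handled in the same manner since, as noted after \cref{dsquared}, $X$ is then also Alexandrov with an arbitrary positive upper bound, so \cref{key1} applies in a neighborhood and the argument localises to the iterates; alternatively one may invoke \cite[Lemma 4.6(I)]{ohta2015discretetime} directly. Thus the sequence is Fej{\'e}r, and in particular quasi-Fej{\'e}r with $\epsilon_k = 0$, with respect to $U^*$.

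Next, by \cref{fejer}, $\{x^k\}_{k \in \N}$ is bounded, and by local compactness it admits a convergent subsequence $x^{k_i} \to x^*$. The main obstacle, and the only nontrivial step, is to verify that $x^* \in U^*$. For this I would invoke \cite[Theorem 5.1]{ohta2015discretetime}, which, under the assumption $\sum_{k=0}^\infty \tau_k = \infty$ together with the nonemptyness of $U^*$ and \cref{key1}, gives $\lim_{k\to\infty} G(x^k) = \inf_{y \in X} G(y)$. Combined with lower semicontinuity of $G$,
\[
    G(x^*) \leq \liminf_{i \to \infty} G(x^{k_i}) = \inf_{y \in X} G(y),
\]
so $x^* \in U^*$.

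Finally, since $x^*$ is an accumulation point of the quasi-Fej{\'e}r sequence $\{x^k\}_{k \in \N}$ that belongs to $U^*$, the second assertion of \cref{fejer} yields $\lim_{k \to \infty} x^k = x^*$, completing the proof. The only subtlety, beyond bookkeeping, is the identification of the subsequential limit as a minimiser; everything else reduces to a direct application of the two previously established tools.
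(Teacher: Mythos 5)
Your proof is correct and follows the same route as the paper's: establish the Fej\'er property with respect to $U^*$ via \cref{key1}, extract a convergent subsequence using boundedness and local compactness, identify the limit as a minimiser via lower semicontinuity together with \cite[Theorem 5.1]{ohta2015discretetime}, and conclude by the second assertion of \cref{fejer}. The only difference is that you explicitly flag the $\overline{\kappa}\leq 0$ case, which the paper passes over silently; this is a reasonable extra remark rather than a genuine divergence.
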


\begin{proof}
    Since $U^*$ is nonempty, we have $G(y) \leq G(x^k)$ for any $y \in U^*$ and $k \geq 0$. By \cref{key1} we have that $d^2(y,x^{k+1}) \leq d^2(y,x^{k})$ and hence $d(y,x^{k+1}) \leq d(y,x^{k})$ for all $k=0,1,\ldots$, i.e., $\{ x^k \}_{k \in \N}$ is a Fej{\'e}r sequence with respect to the nonempty set $U^* \subset X$. By \cref{fejer}, Fej{\'e}r sequences are bounded and since $X$ is locally compact and complete, the sequence $\{ x^k \}_{k \in \N}$ contains a convergent subsequence $\{ x^{k_i} \}_{i \in \N}$ such that $ \lim_{i \to \infty} x^{k_i} = x^* $. By lower semicontinuity and \cite[Theorem 5.1]{ohta2015discretetime} we have $G(x^*) \leq \liminf_{i \to \infty} G(x^{k_i}) = \inf_{x \in X} G(x)$, which implies that $x^* \in U$. \cref{fejer} now gives $\lim_{k \to \infty} x^k = x^*$.
\end{proof}

Without local compactness but assuming strong convexity from $G$, the algorithm can also be shown to converge to the unique minimiser of $G$ \cite[Theorem 5.7]{ohta2015discretetime}. For completeness, let us also state an iteration-complexity bound for the method.

\begin{proposition}
    Let $\{ x^k \}_{k \in \N}$ be the sequence generated by \cref{proxiter} with $\tau_k \geq \tau > 0$, for $k=0,1,\ldots$. Then for every $N \in \N$, and $x^*$ a point where $G$ achieves a minimum, we have
    \[
        G(x^{N+1}) - G(x^*) \leq \frac{d^2(x^*,x^0)}{2\tau (N+1)}.
    \]
\end{proposition}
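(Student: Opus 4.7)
The plan is to mimic the standard Hilbert-space convergence-rate analysis for the proximal point method, using \cref{key1} as a replacement for the usual three-point identity. Two applications of \cref{key1} at each iteration produce, respectively, a telescoping descent-towards-optimum inequality and the monotonicity $G(x^{k+1}) \leq G(x^k)$; summing the former and substituting the latter yields the claim.

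Concretely, first I would apply \cref{key1} with $y = x^*$ and $x = x^k$, which (assuming $x^* \in A$, which is implicit in the statement and guaranteed by the hypothesis that $A$ contains a sublevel set of $G$ containing the minimisers) gives
\begin{equation*}
    2\tau_k\bigl(G(x^{k+1}) - G(x^*)\bigr) \leq d^2(x^*, x^k) - d^2(x^*, x^{k+1}).
\end{equation*}
Next I would apply \cref{key1} with $y = x^k$ and $x = x^k$ to obtain $d^2(x^k, x^{k+1}) \leq 2\tau_k(G(x^k) - G(x^{k+1}))$, and since $\tau_k > 0$ and $d^2 \geq 0$, this yields $G(x^{k+1}) \leq G(x^k)$. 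In particular, the sequence $\{G(x^k)\}$ is non-increasing, so $G(x^{k+1}) \geq G(x^{N+1})$ for every $k \leq N$.

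Summing the first inequality over $k = 0, 1, \ldots, N$, the right-hand side telescopes to $d^2(x^*, x^0) - d^2(x^*, x^{N+1}) \leq d^2(x^*, x^0)$. On the left, using monotonicity to lower-bound each term by $G(x^{N+1}) - G(x^*)$ and $\tau_k \geq \tau$, I get
\begin{equation*}
    2\tau(N+1)\bigl(G(x^{N+1}) - G(x^*)\bigr) \leq \sum_{k=0}^{N} 2\tau_k\bigl(G(x^{k+1}) - G(x^*)\bigr) \leq d^2(x^*, x^0),
\end{equation*}
and dividing by $2\tau(N+1)$ gives the claim.

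I do not anticipate a real obstacle here; the only subtle point is ensuring that $x^* \in A$ so that \cref{key1} is applicable with the choice $y = x^*$, which is clear once one interprets $x^*$ as a minimiser attained in $A$ (as is standard for the constrained proximal setup of this section). Everything else is a direct telescoping argument identical in structure to the Hilbert-space case, with $\lambda$-convexity of the squared distance absorbed into \cref{key1}.
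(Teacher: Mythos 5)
Your proof is correct and follows essentially the same route as the paper's: apply \cref{key1} with $y=x^*$ to obtain the telescoping bound, apply it again with $y=x^k$ to obtain monotonicity of $\{G(x^k)\}$, then sum and combine. The only difference is cosmetic — you keep the $\tau_k$ through the summation and invoke $\tau_k\ge\tau$ at the end, whereas the paper applies $\tau_k\ge\tau$ termwise before summing (which implicitly uses that the telescoping differences are non-negative, a fact that also follows from \cref{key1} with $y=x^*$). Your version is if anything slightly more explicit about the sign conditions needed for the monotone comparison of products.
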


\begin{proof}
    Choose $y = x^*$ in \cref{key1} and use $\tau_k \geq \tau > 0$ to get
	\[
		G(x^{k+1}) - G(x^*) \leq \frac{1}{2\tau_k}(d^2(x^*,x^k)-d^2(x^*,x^{k+1})) \leq \frac{1}{2\tau}(d^2(x^*,x^k)-d^2(x^*,x^{k+1})).
	\]
    Summing over from $0$ to $N$ and using the fact that the sequence $\{G(x^k)\}_{k \in \N}$ is monotone nonincreasing (choose $y=x$ in \cref{key1} to get $G(x^N) \leq G(x^k)$ for all $k=0,1\ldots,N$) gives us the result
	\[
		(G(x^{N+1}) - G(x^*))(N+1) = \sum_{k=0}^{N}(G(x^{N+1}) - G(x^*)) \leq \sum_{k=0}^{N}(G(x^{k+1}) - G(x^*)) \leq \frac{d^2(x^*,x^0)}{2\tau}.
        \qedhere
	\]
\end{proof}

\subsection{Gradient descent map}

For a complete finite-dimensional Alexandrov space $(X,d)$ with $\partial X = \emptyset$ and curvature bounded from below by $\underline{\kappa} \in \R$, we can define the gradient descent map of a proper, convex, and lower semicontinuous function $F : X \to \overbar{\R}$. Fix the step size $\tau > 0$ and a closed, (geodesically) convex set $A \subset X$ containing a nonempty sublevel set of $F$.

\begin{definition}[Gradient descent map {\cite[Definition 4.5]{ohta2015discretetime}}]
	\label{gradresolv}
	For each $x \in X$ with $|\nabla_- F|(x) < \infty $, we define
	\[
		\gdesc_{\tau F} : X \to X, \qquad \gdesc_{\tau F}(x) \coloneqq \gexp(\tau \nabla(-F)(x)),
	\]
	where $\gexp$ is the gradient exponential map and $\nabla (-F)$ is as given in \cref{nabla}.
\end{definition}

\begin{remark}
    The rigorous definition of the gradient exponential map is very technical and we will not give it here as we will will not use it for our results.
    Informally, the gradient exponential map $\gexp : T_pX \to X$ maps a given $v \in T_pX$ to a gradient curve of the distance function from $p$, i.e., $\gexp(tv) = \gamma(t)$, where $\gamma : [0,\infty) \to X$ is a radial curve for $d_p(\cdot)$.
    These radial curves have a parametrisation that coincides with the arc length parametrisation as long as $\gamma$ is a geodesic, meaning that the gradient exponential agrees with the exponential map at all points where the latter is defined (the exponential map will be defined in \cref{sec:proxgrad}).
    The notable difference between the maps is that the gradient exponential map is defined for all times $t \geq 0$, whereas the exponential map (if defined at all) is defined only in the cut-locus (which can be arbitrary small). See \cite[Chapter 16 J]{petruninfoundations} for the definition and some of its properties.
\end{remark}

As in the upper curvature bound case, to prove convergence results we need the corresponding key lemma for the gradient descent map.

\begin{lemma}[{\cite[Lemma 4.6(II)]{ohta2015discretetime}}]
	\label{key2}
	Let $(X,d)$ be a finite-dimensional complete Alexandrov space with curvature bounded from below by $\underline{\kappa} < 0$, $\partial X = \emptyset$, and $\diam A < \infty$. Then we have
	\[
		d^2(y, \gdesc_{\tau F}(x)) \leq d^2(y,x) + 2\tau (F(y) - F(x)) + \frac{\lambda_l}{2} (\tau|\nabla_- F|(x))^2
	\]
    for all $x, y \in A$ satisfying $\gdesc_{\tau F}(x) \in A$, where $\lambda_l = \lambda_l(\underline{\kappa},\diam A)$ is the constant \cref{triglowlambda}.
\end{lemma}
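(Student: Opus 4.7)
The natural plan is to apply the trigonometric lower bound \cref{triglower} to the geodesic triangle with vertices $x$, $y$, and $z \coloneqq \gdesc_{\tau F}(x)$, and then reinterpret the resulting cosine term via the optimality property \eqref{optimalitygrad} of the steepest-descent direction together with the convexity of $F$. Concretely, I set $a = d(y, z)$, $b = d(x, z)$, $c = d(x, y)$, and write $\theta$ for the angle at $x$ between the two geodesic sides meeting there; the monotonicity of $s / \tanh s$ together with $c \leq \diam A$ bounds the factor $\sqrt{|\underline{\kappa}|}c/\tanh(\sqrt{|\underline{\kappa}|}c)$ by $\lambda_l/2$, so that \cref{triglower} delivers
\[
    d^2(y, z) \leq \tfrac{\lambda_l}{2}\, d^2(x, z) + d^2(x, y) - 2\, d(x, z)\, d(x, y) \cos \theta.
\]

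By construction of the gradient descent map, $z$ lies on the radial curve emanating from $x$ in the steepest-descent direction $\alpha$ from \eqref{nabla}, traversed with speed $|\nabla_-F|(x)$ up to parameter $\tau$; this gives $d(x, z) \leq \tau |\nabla_-F|(x)$, with equality and with $\theta = \angle_x(\alpha, \beta)$ where $\beta \in \Sigma_x X$ is the initial direction of a geodesic from $x$ to $y$, at least while the step stays inside the injectivity radius. The first term on the right is therefore bounded by $\tfrac{\lambda_l}{2}(\tau |\nabla_-F|(x))^2$. For the cross term, I would combine \eqref{optimalitygrad} in the form $-|\nabla_-F|(x)\cos\theta \leq D_x F((\beta, 1))$ with the $\lambda = 0$ case of \cref{subdiff} and the positive homogeneity of $D_x F$ (applied to the initial velocity $(\beta, d(x,y))$ of a constant-speed geodesic $x \# y$), which give $D_x F((\beta, 1))\, d(x, y) \leq F(y) - F(x)$. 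Chaining these two bounds and using $d(x, z) = \tau |\nabla_-F|(x)$ converts the cross term into $-2\, d(x, z)\, d(x, y) \cos\theta \leq 2\tau (F(y) - F(x))$, and substituting back yields the claim.

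The main obstacle I expect is precisely the identification $d(x, z) = \tau |\nabla_-F|(x)$: past the cut distance the gradient exponential may travel strictly less than $\tau|\nabla_-F|(x)$, and then the sign of $\cos\theta$ determines whether the inequality $d(x, z) \leq \tau |\nabla_-F|(x)$ can be freely inserted into the cross term; a naive substitution in fact fails when $\cos\theta > 0$. I would resolve this by applying the argument infinitesimally along the gradient flow $\eta(t) = \gexp(t \nabla(-F)(x))$ for $t \in [0, \tau]$: the trigonometric bound together with the first variation formula \cref{firstvar} produces a differential inequality for $t \mapsto \tfrac{1}{2} d^2(\eta(t), y)$ whose integrand is $F(y) - F(\eta(t)) + \tfrac{\lambda_l}{2} |\nabla_-F|^2(\eta(t))$, and this integrates to the desired estimate once descent of $F$ along $\eta$ is used to replace $F(\eta(t))$ by $F(x)$.
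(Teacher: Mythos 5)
This lemma is cited verbatim from \cite[Lemma 4.6(II)]{ohta2015discretetime}; the paper states it without proof, so there is no in-paper argument to compare your attempt against. What the paper \emph{does} prove is the bilaterally-bounded analogue \cref{key2new}, which replaces $\gexp$ with $\exp$ and replaces $(\tau|\nabla_-F|(x))^2$ with $d^2(x,\desc_{\tau F}(x))$. Your first paragraph is exactly that argument: \cref{triglower}, then the inner-product identity, then \cref{optimalitygrad} together with convexity via \cref{subdiff}. For $\exp$ this goes through cleanly, because by \cref{def:exp} the logarithm of $\desc_{\tau F}(x)$ along the chosen geodesic is \emph{literally} $\tau\nabla(-F)(x)$, so no identification between $d(x,\desc_{\tau F}(x))$ and $\tau|\nabla_-F|(x)$ is ever needed -- the cross term is $-2\iprod{\tau\nabla(-F)(x)}{\gamma^+(0)}$ by definition.

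You are right that for $\gexp$ the naive substitution breaks down once the radial curve leaves the cut-locus, and you are right that the sign of $\cos\theta$ makes the inequality $d(x,z)\le\tau|\nabla_-F|(x)$ unusable in the cross term when $\cos\theta>0$. However, your proposed repair is not sound. First, $\eta(t)=\gexp(t\nabla(-F)(x))$ is a radial curve of the distance function $d_x$, not the gradient flow of $F$; there is no reason $F$ decreases along $\eta$, so the step "use descent of $F$ along $\eta$ to replace $F(\eta(t))$ by $F(x)$" is unavailable -- and even if $F$ did decrease, that replacement goes the wrong way, since $F(\eta(t))\le F(x)$ makes $F(y)-F(\eta(t))$ \emph{larger}, not smaller. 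Second, the optimality characterisation \cref{optimalitygrad} and the velocity $|\nabla_-F|$ would have to be taken at $\eta(t)$, not at $x$: the radial direction of $d_x$ at $\eta(t)$ is generally not the steepest-descent direction of $F$ at $\eta(t)$, so the link between the angle in the first variation formula and $D_{\eta(t)}F$ does not exist. Third, the scaling is off: integrating $\tfrac{\lambda_l}{2}|\nabla_-F|^2(\eta(t))$ over $[0,\tau]$ gives an $O(\tau)$ term, not the required $O(\tau^2)$ term $\tfrac{\lambda_l}{2}(\tau|\nabla_-F|(x))^2$. In the flat model the correct integrand has the form $F(y)-F(x) + t|\nabla F(x)|^2$, with $F$ and the gradient frozen at $x$ and an explicit factor of $t$ supplying the $\tau^2$; reproducing that structure in the Alexandrov setting is precisely where the special comparison properties of the gradient exponential from \cite[Chapter~16J]{petruninfoundations} enter, and your sketch does not engage with them. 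In short: the first paragraph recovers the paper's own proof of \cref{key2new}, the obstacle you flag is the genuine obstacle, but the infinitesimal repair as written would not establish \cref{key2}.
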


As before, we consider the algorithm generated by the gradient descent map $\gdesc_{\tau F}$ by choosing an arbitrary starting point $x^0 \in A$ and iterating
\begin{equation}
	\label{desciter}
	x^{k+1} = \gdesc_{\tau_k F}(x^k), \quad k \geq 0,
\end{equation}
for an appropriate positive sequence $\{\tau_k \}_{k \in \N}$.
This algorithm is shown to converge to a minimiser of a Lipschitz continuous $F$ \cite[Theorem 5.5]{ohta2015discretetime} and to its unique minimiser when it is also strongly convex \cite[Theorem 5.7]{ohta2015discretetime}. These are done with the same step size assumptions as in the upper curvature bound case ($\tau_k > 0$, $\sum_{k=0}^{\infty}\tau_k = \infty$, and $\sum_{k=0}^{\infty}\tau_k^2 < \infty$). For these step sizes, we have the following iteration-complexity bound for the gradient descent algorithm.

\begin{proposition}
	\label{iter1}
	Let $F$ be Lipschitz continuous with constant $L_F$ and $\{x^k\}_{k \in \N}$ be the sequence generated by \cref{desciter}. Let $\alpha_k \coloneqq \tau_k |\nabla_-F|(x^k) $, for $k=0,1,\ldots$. Then for every $N \in$ $\N$, the following holds
	\[
		\min \{ F(x^k) - F(x^*) \; | \; k=0,1,\ldots,N \} \leq L_F \frac{d^2(x^0,x^*) + \frac{\lambda_l}{2}\sum_{k=0}^{N}\alpha_k^2}{2 \sum_{k=0}^{N} \alpha_k},
	\]
where $x^*$ is a point in which $F$ achieves a minimum.
\end{proposition}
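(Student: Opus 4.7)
The plan is to use the descent estimate of \cref{key2} with the fixed point $y = x^\ast$ and the running choices $x = x^k$, $\tau = \tau_k$, so that $\gdesc_{\tau_k F}(x^k) = x^{k+1}$. Rearranging the inequality to isolate the objective gap gives
\[
    2\tau_k\bigl(F(x^k) - F(x^\ast)\bigr) \leq d^2(x^\ast, x^k) - d^2(x^\ast, x^{k+1}) + \frac{\lambda_l}{2}\alpha_k^2,
\]
which is precisely the shape needed for a telescoping argument.

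The next step is to sum this inequality from $k = 0$ to $k = N$. The distance terms telescope, leaving $d^2(x^\ast, x^0) - d^2(x^\ast, x^{N+1})$, and the latter nonnegative term can be discarded. This yields
\[
    2\sum_{k=0}^{N} \tau_k\bigl(F(x^k) - F(x^\ast)\bigr) \leq d^2(x^\ast, x^0) + \frac{\lambda_l}{2}\sum_{k=0}^{N}\alpha_k^2.
\]
Since each term $F(x^k) - F(x^\ast) \geq 0$, I would then lower-bound the left-hand side by $\min_{k \leq N}\bigl(F(x^k) - F(x^\ast)\bigr)\cdot \sum_{k=0}^N \tau_k$, giving a bound on the minimum gap in terms of $\sum_k \tau_k$ in the denominator.

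The remaining task is to convert $\sum_k \tau_k$ into $\sum_k \alpha_k$, which is where the Lipschitz hypothesis on $F$ enters. I would invoke the standard estimate (noted right after the definition of $|\nabla_-F|$) that $L_F$-Lipschitz continuity forces $|\nabla_-F|(x^k) \leq L_F$ pointwise; hence $\alpha_k = \tau_k |\nabla_-F|(x^k) \leq L_F \tau_k$, and summing gives $\sum_{k=0}^N \tau_k \geq L_F^{-1}\sum_{k=0}^N \alpha_k$. Substituting this lower bound into the denominator of the previous inequality delivers exactly the claimed estimate.

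I do not expect a serious obstacle: the telescoping relies entirely on the form of \cref{key2}, and the Lipschitz bound on $|\nabla_-F|$ is elementary. The only item worth a line of care is the implicit regularity requirement that \cref{key2} be applicable at each step, i.e., that $x^k, x^\ast \in A$ and $\gdesc_{\tau_k F}(x^k) \in A$; this I would treat as inherited from the setup of the iteration \cref{desciter}, so that no additional argument is needed here.
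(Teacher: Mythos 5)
Your proof is correct and follows essentially the same approach as the paper's: both apply \cref{key2} with $y=x^*$, telescope the squared-distance terms, lower-bound the weighted sum by the minimum gap, and invoke $|\nabla_-F|(x^k)\leq L_F$ to pass from $\tau_k$ to $\alpha_k$. The only difference is cosmetic ordering — you sum first and then apply the Lipschitz bound on $\sum\tau_k$, whereas the paper inserts $\tau_k=\alpha_k/|\nabla_-F|(x^k)$ and bounds term-by-term before summing — which yields the identical estimate.
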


\begin{proof}
Choose $y = x^*$ and insert $\tau_k = \alpha_k/|\nabla_-F|(x^k)$ in \cref{key2} to get
	\[
		2\frac{\alpha_k}{|\nabla_-F|(x^k)} (F(x^k) - F(x^*)) \leq d^2(x^k,x^*) - d^2(x^{k+1},x^*) + \frac{\lambda_l}{2} \alpha_k^2
	\]
for all $k=0,1,\ldots,N$. From the Lipschitz continuity of $F$ we have that $|\nabla_-F|(x^k) \leq L_F$, so summing over from $0$ to $N$ we get
	\[
		\frac{2}{L_F} \sum_{k=0}^{N} \alpha_k (F(x^k) - F(x^*)) \leq d^2(x^0,x^*) - d^2(x^{N+1},x^*) + \frac{\lambda_l}{2} \sum_{k=0}^{N} \alpha_k^2.
	\]
Simply noticing now that $\sum_{k=0}^{N} \alpha_k (F(x^k) - F(x^*)) \geq \min \{ F(x^k) - F(x^*) \; | \; k=0,1,\ldots,N \} \sum_{k=0}^{N} \alpha_k$ gives us
	\[
		\min \{ F(x^k) - F(x^*) \; | \; k=0,1,\ldots,N \} \sum_{k=0}^{N} \alpha_k \leq L_F \frac{d^2(x^0,x^*) + \frac{\lambda_l}{2} \sum_{k=0}^{N} \alpha_k^2}{2}
	\]
which when divided by the sum $\sum_{k=0}^{N} \alpha_k$ gives us the result.
\end{proof}

\section{Proximal gradient map}
\label{sec:proxgrad}

Now we are ready to define and analyse the proximal gradient map. Let $(X,d)$ be a complete locally compact Alexandrov space with $\partial X = \emptyset$ and curvature bounded from above and below by $\overline{\kappa}, \underline{\kappa} \in \R$, respectively ($\underline{\kappa} \leq \overline{\kappa}$). Fix the step size $\tau > 0$ and let $G, F : X \to \overbar{\R}$ be proper, convex, and lower semicontinuous functions. The proximal gradient map will use a different version of the gradient map relying on the exponential map instead of the gradient exponential map.

\begin{definition}[Exponential map]
    \label{def:exp}
	The exponential map at $x$ is a mapping from a subset of $T_xX$ into $X$ defined by $\exp_x (tv) = \gamma(t)$, where $\gamma : [0,\delta] \to X$ is a geodesic, with $\gamma(0) = x$ and $\gamma^+(0) = v$.
\end{definition}

Geometrically, the exponential map takes a geodesic direction $v$ at a starting point $x$ and travels to that direction for distance $t$ arriving at the point $\gamma(t)$. Clearly this is well-defined for at least sufficiently short vectors (as long as the curve $\gamma$ stays minimal). In fact, as $X$ is a bilaterally bounded Alexandrov space, it is a topological manifold and hence has infinitely extendible (to both sides) local geodesics by \cite[Exercise 9.9]{petruninfoundations}. This, along with local compactness, implies that the space of directions $\Sigma^{\prime}_{x}X$ (and hence $T^{\prime}_xX$ as well) is complete for any $x \in X$ \cite[Exercise 9.10]{petruninfoundations}. Thus, if we allow our geodesics to be only locally minimising, we see that the exponential map is defined for any vector in $T_xX$. This will be used in the a priori locality \cref{locality} but will subsequently be strengthened to geodesics that lie in a geodesically convex set. The exponential map also enjoys Lipschitz continuity.

\begin{lemma}[{\cite[2.3]{SobolevKuwae2001}}]
	\label{explip}
	The exponential map is Lipschitz continuous with a constant $L_{\exp} \coloneqq 1 + L(\overline{\kappa}, r)$, where $L(\overline{\kappa}, r) \geq 0$ and $r$ is a radius of the origin where the exponential map is defined, with $L(\overline{\kappa}, r) \to 0$ as $r \to 0$ (and naturally as $\overline{\kappa} \to 0$).
\end{lemma}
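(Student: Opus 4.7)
The plan is to compare the geodesic triangle $\triangle x\,\exp_x(v)\,\exp_x(w)$ in $X$ to its counterpart in the model plane $\mathbb{M}^2(\overline{\kappa})$, and then to the Euclidean tangent cone. Write $v = (\gamma, t)$ and $w = (\eta, s)$ with $t, s \leq r$, and set $\theta \coloneqq \angle_x(\gamma, \eta)$; by the definition of the tangent cone metric,
\[
    d_{\angle_x}(v, w)^2 = t^2 + s^2 - 2ts \cos \theta.
\]

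First I would use the curvature bounds of $X$ to obtain the estimate
\[
    d(\exp_x(v), \exp_x(w)) \leq a_{\overline{\kappa}}(t, s, \theta),
\]
where $a_{\overline{\kappa}}(t, s, \theta)$ denotes the third side in a triangle in $\mathbb{M}^2(\overline{\kappa})$ with two sides of lengths $t, s$ enclosing the angle $\theta$, as given by the appropriate law of cosines. For $\overline{\kappa} = 0$, this quantity equals $d_{\angle_x}(v, w)$ exactly, so $L(0, r) = 0$. For $\overline{\kappa} \neq 0$, I would Taylor-expand the spherical/hyperbolic law of cosines — for instance, in the spherical case,
\[
    \cos\bigl(\sqrt{\overline{\kappa}}\, a_{\overline{\kappa}}\bigr) = \cos\bigl(\sqrt{\overline{\kappa}}\, t\bigr)\cos\bigl(\sqrt{\overline{\kappa}}\, s\bigr) + \sin\bigl(\sqrt{\overline{\kappa}}\, t\bigr)\sin\bigl(\sqrt{\overline{\kappa}}\, s\bigr)\cos\theta
\]
— in powers of $\sqrt{|\overline{\kappa}|}\, r$, yielding
\[
    a_{\overline{\kappa}}(t, s, \theta)^2 = d_{\angle_x}(v, w)^2 \bigl(1 + O(\overline{\kappa}\, r^2)\bigr)
\]
uniformly in $\theta \in [0, \pi]$, which gives $L(\overline{\kappa}, r) = O(\overline{\kappa}\, r^2)$ and the required vanishing as $r \to 0$ or $\overline{\kappa} \to 0$.

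The main obstacle lies in the first step: direct application of the CAT($\overline{\kappa}$) comparison theorem produces only a lower bound on $d(\exp_x(v), \exp_x(w))$, since CAT inequalities bound actual angles above by comparison angles, and the monotonicity of the law of cosines in the enclosed angle then bounds actual side lengths below. Extracting the upper bound required here leans on the bilateral curvature bounds of $X$ (available from the ambient hypotheses), from which the comparison argument of the cited reference proceeds. A secondary technical point is uniformity of the Taylor estimate at the degenerate angles $\theta \in \{0, \pi\}$ where $d_{\angle_x}(v, w)$ can collapse; this is settled by the direct observation that $a_{\overline{\kappa}}$ and $d_{\angle_x}$ reduce respectively to $|t - s|$ and $t + s$ at these values, so the ratio $a_{\overline{\kappa}}/d_{\angle_x}$ extends continuously by $1$ and the Taylor correction is controlled uniformly in $\theta$.
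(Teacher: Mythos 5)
The paper does not prove this lemma; it cites it directly from the reference \cite[2.3]{SobolevKuwae2001}, so there is no in-paper proof to compare against, and your proposal has to be judged on its own merits.

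Your overall shape --- hinge comparison against a model plane followed by a Taylor expansion in $\sqrt{|\kappa|}\,r$ --- is the right one, and you correctly identify the central danger: the $\mathrm{CAT}(\overline\kappa)$ comparison, which bounds the Alexandrov angle from above by the comparison angle, pushes the law of cosines the \emph{wrong} way and delivers only a \emph{lower} bound on $d(\exp_x v,\exp_x w)$. But the fix you gesture at (``leans on the bilateral curvature bounds'') does not actually close the gap, and it misattributes where the needed estimate comes from. The upper bound
\[
    d(\exp_x v,\exp_x w)\;\le\;a_{\underline\kappa}(t,s,\theta)
\]
is exactly the hinge (Toponogov) comparison in a space of curvature bounded \emph{below} by $\underline\kappa$: $\mathrm{CBB}(\underline\kappa)$ forces the actual angle to be $\ge$ the comparison angle of a triangle with the same side lengths in $\mathbb M^2(\underline\kappa)$, and monotonicity of the law of cosines in the angle then bounds the third side above by $a_{\underline\kappa}(t,s,\theta)$. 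So the Lipschitz estimate is a one-sided consequence of the lower curvature bound alone; this is also consistent with the cited source, which works in $\mathrm{CBB}$ spaces. The upper bound $\overline\kappa$ plays no role in the quantitative estimate --- in this paper's setting the bilateral bound matters only for the well-definedness of $\exp_x$ on all of $T_xX$ (via the manifold structure and completeness of the tangent cone), not for the Lipschitz constant. Your model side should therefore be $a_{\underline\kappa}$, and the correction is $L=O(|\underline\kappa|\,r^2)$ (so $L\ge 0$ comes from the relevant case $\underline\kappa\le 0$; when $\underline\kappa\ge 0$ the map is nonexpanding and one may take $L=0$). The discrepancy between this and the statement's ``$L(\overline\kappa,r)$'' appears to be a notational slip in the lemma itself rather than something your proof needs to reproduce.

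Two smaller points. First, your uniformity remark is garbled: at $\theta=0$ \emph{both} $a_\kappa$ and $d_{\angle_x}$ reduce to $|t-s|$, and at $\theta=\pi$ \emph{both} reduce to $t+s$; the conclusion that the ratio extends continuously to $1$ at the degenerate angles (and hence that the Taylor correction is uniform in $\theta$) is correct, but the sentence as written pairs the two limits to the two functions rather than to the two angles. Second, uniformity near $\theta=0$ where both quantities vanish is not automatic from pointwise Taylor expansion and deserves an explicit argument (e.g.\ comparing $\operatorname{sn}_{\underline\kappa}$ to the identity and bounding the ratio $a_{\underline\kappa}/d_{\angle_x}$ rather than the difference); as stated this step is still a hand-wave.
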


We can now reformulate the gradient descent map.

\begin{definition}[Gradient descent map, bilateral bounds]
	\label{gradexpresolv}
	For each $x \in X$ with $|\nabla_- F|(x) < \infty $, we define
	\[
		\desc_{\tau F} : X \to X, \qquad \desc_{\tau F}(x) \coloneqq \exp_x(\tau \nabla(-F)(x)),
	\]
    where $\exp$ is the exponential map and $\nabla (-F)$ is as given in \cref{nabla}.
\end{definition}

Combining our newly defined gradient descent map with the standard unconstrained proximal map ($A=X$ in \cref{def:proxconstrained} without the diameter constraint) gives us the proximal gradient map.

\begin{definition}[Proximal gradient map]
    \label{proxgrad}
    For each $x\in X$ with $|\nabla_- F|(x) < \infty $, we choose
    \[
        J : X \to X, \qquad J(x) \coloneqq \prox_{\tau G}(\desc_{\tau F}(x)) \in \argmin_{y \in X} \bigg\{G(y) + \frac{1}{2\tau}d^2(\exp_x(\tau \nabla (-F)(x)),y) \bigg\}.
    \]
\end{definition}

It can be shown for a proper, lower semicontinuous, and convex function $G$, that the proximal gradient map generates at least one point $J(x)$ for any $x \in X$ (this follows from the coercivity of the expression inside the braces). To analyse the map, we need a result akin to \cref{key1,key2}. To do so, we need an optimality condition, refine \cref{key1} to include the strong convexity factor of the squared distance function, modify \cref{key2} to include the squared distance function (replacing the squared vector term), and use a smoothness condition for $F$ to move from $F(x)$ to $F(J(x))$.

We can provide a necessary condition characterisation of the point $J(x)$ similarly to the basic proximal map in \cite[(4.2)]{ohta2015discretetime}.

\begin{lemma}
    \label{lemma:proxopt}
	Let $(X,d)$ be a locally compact Alexandrov space with an upper curvature bound $\overline{\kappa}$ and $G : X \to \overbar{\R}$ proper, lower semicontinuous, and $\lambda$-convex with $\lambda \geq 0$. Then for all unit-speed geodesics $\gamma, \eta : [0,1] \to X$  with $\gamma(0) = \prox_{\tau G}(x)$ and $\eta(t) = \prox_{\tau G}(x) \#_t x$ such that $0 < d(\prox_{\tau G}(x),x) < \pi/(2\sqrt{\overline{\kappa}})$, we have
    \begin{equation}
		\label{eq:proxopt}
        D_{\prox_{\tau G}(x)}G(\gamma^+(0)) - \frac{1}{\tau} \langle \gamma^+(0), \eta^+(0) \rangle \geq 0.
	\end{equation}
\end{lemma}

\begin{proof}
    By \cref{proxgrad} and the Fermat principle of \cref{fermat} (the sum in the proximal map is strongly convex by assumption $\lambda \geq 0$ and \cref{dsquared}), $\prox_{\tau G}(x)$ solves the minimisation problem for the proximal map if and only if $D_{\prox_{\tau G}(x)} (G + (2\tau)^{-1}d_x^2)( \gamma^+(0)) \geq 0$ for all $\gamma^+(0) \in T_{\prox_{\tau G}(x)}X$.
    The first variation formula of \cref{firstvar} then shows that $D_{\prox_{\tau G}(x)}(2\tau)^{-1}d_x^2(\gamma^+(0)) = -\tau^{-1} \langle \gamma^+(0), \eta^+(0) \rangle$ for all unit-speed geodesics $\eta : [0,1] \to X$ with $\eta(t) = \prox_{\tau G}(x) \#_t x$.
\end{proof}

To refine \cref{key1}, we use \cref{trigupper}, the optimality condition of the proximal map, and the convexity of $G$ to get the appropriate estimate.
The following few lemmas are a posteriori estimates that will require first showing that $\prox_{\tau G}(x), \desc_{\tau F}(x) \in A$.

\begin{lemma}
	\label{key1new}
    Let $(X,d)$ be a complete Alexandrov space with curvature bounded from above by $\overline{\kappa}$ and $G : X \to \overbar{\R}$ proper, lower semicontinuous, and convex. Let $A \subset X$ be such that $\diam A < \pi/(2\sqrt{\overline{\kappa}})$. Then we have
    \begin{equation}
		\label{eq:key1new}
        d^2(y, \prox_{\tau G}(x)) \leq d^2(y,x) + 2\tau (G(y) - G(\prox_{\tau G}(x))) - \frac{\lambda_u}{2}d^2(x, \prox_{\tau G}(x)),
	\end{equation}
    for all $x, y \in A$ satisfying $\prox_{\tau G}(x) \in A$, where $\lambda_u = \lambda_u(\overline{\kappa},\diam A)$ is given in \cref{triguplambda}.
\end{lemma}

\begin{proof}
	Choose $a = d(y,x)$, $b = d(x,\prox_{\tau G}(x))$, and $c = d(y,\prox_{\tau G}(x))$ in \cref{trigupper} to get
	\begin{equation}
        \label{eq:key1new:item0}
        \begin{split}
            d^2(y,x) &\geq \frac{\sqrt{\overline{\kappa}}c}{\tan(\sqrt{\overline{\kappa}}c)}d^2(x, \prox_{\tau G}(x)) + d^2(y, \prox_{\tau G}(x)) \\
            &- 2d(x, \prox_{\tau G}(x))d(y, \prox_{\tau G}(x)) \cos (\theta) \\
            &\geq \frac{\lambda_u}{2}d^2(x, \prox_{\tau G}(x)) + d^2(y, \prox_{\tau G}(x)) - 2d(x, \prox_{\tau G}(x))d(y, \prox_{\tau G}(x)) \cos (\theta),
        \end{split}
	\end{equation}
	where $\theta = \angle_{\prox_{\tau G}(x)} (\gamma,\eta)$ is the angle between the sides $b$ and $c$, and $\gamma(t) = \prox_{\tau G}(x) \#_t y$ and $\eta(t) = \prox_{\tau G}(x) \#_t x$. The definition of the “inner product”, followed by \cref{lemma:proxopt} and convexity \cref{subdiff} of $G$ along the geodesic $\gamma$, establish
	\begin{equation}
        \label{eq:key1new:item1}
        \begin{split}
        - 2d(x, \prox_{\tau G}(x))d(y, \prox_{\tau G}(x)) \cos (\theta)
        &
        = -2 \iprod{\gamma^+(0)}{\eta^+(0)}
        \\
        &
        \ge
        -2\tau D_{\prox_{\tau G}(x)}G(\gamma^+(0))
        \\
        &
        \ge 2\tau (G(\prox_{\tau G}(x)) - G(y)).
        \end{split}
	\end{equation}
    Combining \cref{eq:key1new:item0,eq:key1new:item1} establishes the claim.
\end{proof}

To modify \cref{key2}, we use \cref{triglower}, the optimality condition of the gradient map \cref{optimalitygrad}, and convexity of $F$ to get the appropriate estimate.

\begin{lemma}
	\label{key2new}
    Let $(X,d)$ be a complete Alexandrov space with curvature bounded from above by $\overline{\kappa}$ and below by $\underline{\kappa}$, and $F : X \to \overbar{\R}$ proper, lower semicontinuous, and convex. Let $A \subset X$ be such that $\diam A < \pi/(2\sqrt{\overline{\kappa}})$ and $0 < |\nabla_-F|(x) < \infty$, for all $x \in A$. Then we have
    \begin{equation}
		\label{eq:key2new}
        d^2(y, \desc_{\tau F}(x)) \leq d^2(y,x) + 2\tau (F(y) - F(x)) + \frac{\lambda_l}{2}d^2(x, \desc_{\tau F}(x)),
	\end{equation}
    for all $x, y \in A$ satisfying $\desc_{\tau F}(x) \in A$, where $\lambda_l = \lambda_l(\underline{\kappa},\diam A)$ is the constant \cref{triglowlambda}.
\end{lemma}

\begin{proof}
    Since $A$ is assumed geodesically convex and $x, y, \desc_{\tau F}(x) \in A$, we have by \cref{triglower}
	\[
		d^2(y, \desc_{\tau F}(x)) \leq d^2(y,x) + \frac{\lambda_l}{2}d^2(x, \desc_{\tau F}(x)) - 2d(x,y)d(x,\desc_{\tau F}(x))\cos(\theta).
	\]
    It follows from $\desc_{\tau F}(x) = \exp_x(\tau \nabla(-F)(x))$ and the definition of the “inner product” for geodesics $\eta(t) = x \#_t \exp_x (\tau \nabla(-F)(x))$ and $\gamma(t) = x \#_t y$ that
    \[
        - 2d(x,y)d(x,\desc_{\tau F}(x))\cos(\theta) = -2 \langle \eta^+(0), \gamma^+(0) \rangle = -2 \langle \tau \nabla(-F)(x), \gamma^+(0) \rangle.
    \]
    Furthermore, by convexity along $\gamma$ and the characterisation \cref{optimalitygrad} we have
	\begin{equation}
		\label{eq:key2new:item1}
		F(y) - F(x) \geq D_x F(\gamma^+(0)) \geq -\langle \nabla(-F)(x), \gamma^+(0) \rangle.
	\end{equation}
    The claim follows by combining the estimates.
\end{proof}

Finally, we need to introduce the concept of uniform smoothness of a function, which is a natural reformulation of the same concept in Banach spaces and a necessary assumption in analysing the proximal gradient map.

\begin{definition}[Uniform smoothness]
    A function $F : X \to \overline{\R}$ is called uniformly smooth with factor $L>0$, if for all $x, y \in X$ and geodesics $\gamma(t) = x \#_t y$ with $t \in [0,1]$, we have
	\[
		F(\gamma(t)) + \frac{L}{2}t(1-t)d^2(x,y) \geq (1-t)F(x) + tF(y).
	\]
\end{definition}

\begin{example}
	Let $X$ be an Alexandrov space with curvature bounded from below. The beginning of \cref{subsec:trig} shows that the squared distance function $d^2_z$, for a fixed $z \in X$, is $-\lambda$-smooth, where $\lambda$ is the factor from \cref{minusdsquared}.
\end{example}

Using uniform smoothness to derive a suitable descent lemma, we need to also assume differentiability from $F$. This assumption is justified as our space $X$ is bilaterally bounded and thus contains the structure of a differentiable manifold by \cref{bilat}. For a differentiable $F$ , we have the correspondence
\[
    \nabla(-F)(x) = -\nabla F(x) \quad \text{such that} \quad D_xF(\gamma^+(0)) = \langle \nabla F(x), \gamma^+(0) \rangle,
\]
for all geodesics $\gamma$ with $\gamma(0)=x$ (see, \cite[Section 4.1]{ohtawasserstein} and \cite[Chapter 1.3]{ambrosio2006gradient}, respectively). We call $\nabla F(x)$ the (standard) gradient of $F$ at $x$. Note that for critical points $x^*$ of $F$ we have $\nabla F(x^*) = 0$.

\begin{lemma}[Descent lemma]
	\label{descent}
	Let $(X,d)$ be a complete locally compact Alexandrov space and $F$ be proper, $\lambda$-convex, lower semicontinuous, and uniformly smooth with factor $L>0$. We have (for any instance of the possibly non-unique $\log_x y$ when $(X, d)$ is not upper-bounded)
	\begin{equation}
	    \label{eq:descent:general}
		D_xF(\log_x y) + \frac{L}{2}d^2(x,y) \geq F(y) - F(x).
	\end{equation}
	If, $(X, d)$ is, moreover, bilaterally bounded, and $F$ differentiable (but not necessarily $\lambda$-convex), then, for $\gamma(t)=x \#_t y$, we have
	\[
		-\iprod{\gamma^+(0)}{\nabla(-F)(x)}  + \frac{L}{2}d^2(x,y) + F(x) \geq F(y).
	\]
\end{lemma}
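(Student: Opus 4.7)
The plan is to deduce both inequalities directly from the definition of uniform smoothness, by passing to the limit as $t \downto 0$ along a geodesic from $x$ to $y$.

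For the first inequality, I would pick a unit-speed geodesic $\gamma$ with $\gamma(0) = x$ whose tangent $\gamma^+(0)$ represents $\log_x y$ (in the non-upper-bounded case this choice is not unique, which is why the statement allows any instance). Starting from the definition of uniform smoothness applied along $\gamma$,
\[
    F(\gamma(t)) + \frac{L}{2}t(1-t)d^2(x,y) \geq (1-t)F(x) + tF(y),
\]
I would subtract $F(x)$ from both sides, divide by $t > 0$, and rearrange into
\[
    \frac{F(\gamma(t)) - F(x)}{t} \geq F(y) - F(x) - \frac{L}{2}(1-t)d^2(x,y).
\]
Taking $t \downto 0$, the left-hand side converges (by definition of the directional derivative along the geodesic $\gamma$) to $D_xF(\gamma^+(0)) = D_xF(\log_x y)$, while the right-hand side tends to $F(y) - F(x) - \tfrac{L}{2}d^2(x,y)$. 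A rearrangement gives \eqref{eq:descent:general}.

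For the second inequality, I would specialise to the bilaterally bounded case, where \cref{bilat} gives enough regularity for differentiability of $F$ to make sense in the usual differential-manifold way. The identity $D_xF(\gamma^+(0)) = \iprod{\nabla F(x)}{\gamma^+(0)} = -\iprod{\gamma^+(0)}{\nabla(-F)(x)}$ recorded just before the lemma statement then converts \eqref{eq:descent:general}, applied with $\log_x y = \gamma^+(0)$ for the geodesic $\gamma(t) = x \#_t y$, into
\[
    -\iprod{\gamma^+(0)}{\nabla(-F)(x)} + \frac{L}{2}d^2(x,y) \geq F(y) - F(x),
\]
and moving $F(x)$ to the left side yields the stated form.

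The only subtle point is justifying that in the non-unique-$\log$ case the limit on the left really is the directional derivative evaluated on the chosen $\gamma^+(0)$. This is essentially the content of the definition of $D_xF$ applied to the test sequence $(\gamma, s)$ with fixed $\gamma$ and $s = 1$, combined with the fact that the liminf-limit in that definition is attained along any individual geodesic; no further work is needed beyond being explicit that the inequality is proven for the specific geodesic $\gamma$ we picked, which is exactly what the parenthetical in the lemma statement concedes.
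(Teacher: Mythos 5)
Your proposal is correct and takes essentially the same approach as the paper's proof: apply the uniform smoothness inequality along a geodesic from $x$ to $y$, divide by $t$, pass to the limit $t \downto 0$ to obtain \eqref{eq:descent:general}, and then convert the directional derivative to the inner-product form via the identity $D_xF(\gamma^+(0)) = \iprod{\gamma^+(0)}{\nabla F(x)} = -\iprod{\gamma^+(0)}{\nabla(-F)(x)}$ recorded before the lemma. Your closing observation about identifying $\lim_{t\downto 0}\frac{F(\gamma(t))-F(x)}{t}$ with $D_xF(\gamma^+(0))$ along the chosen geodesic is something the paper's proof leaves implicit, and it is fine to flag; the paper tacitly uses the same convention.
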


\begin{proof}
    By the uniform smoothness we have
	\[
		\frac{F(\gamma(t)) - F(x)}{t} + \frac{L}{2}(1-t)d^2(x,y) \geq F(y) - F(x),
	\]
    which letting $t \downto 0$ yields
	\[
		D_xF(\gamma^+(0)) + \frac{L}{2}d^2(x,y) \geq F(y) - F(x).
	\]
	That is, \eqref{eq:descent:general} holds. Lastly, the bilateral bounds and the differentiability of $F$ give $D_xF(\gamma^+(0)) = \langle \gamma^+(0), \nabla F(x) \rangle = -\langle \gamma^+(0), \nabla(-F)(x) \rangle$.
\end{proof}

\subsection{The key estimate}

We now have the necessary tools to derive our key inequality for the proximal gradient map. The plan is to combine \cref{key1new,key2new} at specific points, using the uniform smoothness of $F$ to move from the estimates on $F$ from being based at $x$ to $J(x)$ (to be able combine with the estimate of $G$ at $J(x)$ in \cref{key1new}). Note that it is an a posteriori result, which still requires showing that $w, J(x) \in A$.

\begin{lemma}[Key lemma, convex $G$ and $F$]
    \label{keythm}
    Let $(X,d)$ be a complete locally compact Alexandrov space with $\partial X = \emptyset$ and curvature bounded from above by $\overline{\kappa} \in \R$ and below by $\underline{\kappa} \in \R$ ($\underline{\kappa} \leq \overline{\kappa}$). Fix the step size $\tau > 0$ and a closed, geodesically convex set $A \subset X$ ($\diam A < \pi/(2\sqrt {\overline{\kappa}})$, if $\overline{\kappa} > 0$) containing a nonempty sublevel set of $G + F$, where $G, F : X \to \overbar{\R}$ are proper, convex, and lower semicontinuous functions. Assume further that $F$ is differentiable and uniformly smooth with factor $L>0$. Then we have
    \begin{equation}
        \label{key}
        \begin{split}
        d^2(y,J(x)) \leq d^2(y,x) &+ 2\tau (H(y) - H(J(x))) + (L\tau - \frac{\lambda_u}{2})d^2(x,J(x)) \\
        &+ \left(\frac{\lambda_l}{2} - 1\right)d^2(x,w) + \left(1 - \frac{\lambda_u}{2}\right)d^2(w, J(x))
        \end{split}
    \end{equation}
    for all $x, y \in A$ satisfying $w, J(x) \in A$, where $H(x) = G(x) + F(x)$, $w = \exp_x(\tau \nabla(-F)(x))$, along with
    \[
        \frac{\lambda_u}{2} = \frac{\sqrt{\overline{\kappa}}\diam A}{\tan(\sqrt{\overline{\kappa}}\diam A)} \quad \text{and} \quad \frac{\lambda_l}{2} = \frac{\sqrt{-\underline{\kappa}}\diam A}{\tanh(\sqrt{-\underline{\kappa}}\diam A)}.
    \]
\end{lemma}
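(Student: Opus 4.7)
The plan is to produce \cref{key} by chaining two a posteriori estimates: one for the gradient step $x \mapsto w = \desc_{\tau F}(x)$ and one for the proximal step $w \mapsto J(x) = \prox_{\tau G}(w)$. Then I would use the uniform smoothness of $F$ to shift the $F(x)$ generated by the gradient-step estimate to $F(J(x))$, so that $H(J(x)) = G(J(x)) + F(J(x))$ appears cleanly on the right-hand side alongside the corresponding $G$ contribution.

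First I would apply \cref{key2new} to the gradient step, giving $d^2(y,w) \leq d^2(y,x) + 2\tau(F(y)-F(x)) + \frac{\lambda_l}{2} d^2(x,w)$. Separately, \cref{key1new} applied to the proximal step (with $w$ in the role of $x$) gives $d^2(y, J(x)) \leq d^2(y,w) + 2\tau(G(y) - G(J(x))) - \frac{\lambda_u}{2} d^2(w, J(x))$. Summing these already yields the target inequality up to the nuisance term $-2\tau F(x)$, which still needs to be converted to $-2\tau F(J(x))$.

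For the conversion I would invoke the descent lemma \cref{descent}: with $\gamma(t) = x \#_t J(x)$, and $F$ differentiable and $L$-uniformly smooth, one obtains $-2\tau F(x) \leq -2\tau F(J(x)) + L\tau d^2(x, J(x)) - 2\iprod{\gamma^+(0)}{\tau\nabla(-F)(x)}$. The inner product on the right is the crux of the argument: because $w = \exp_x(\tau\nabla(-F)(x))$, the tangent vector $\tau \nabla(-F)(x) \in T_xX$ has length $d(x,w)$ and, by \cref{def:exp}, is the initial velocity of the geodesic from $x$ to $w$. By the tangent-cone inner product formula \cref{innprod}, the quantity $-2\iprod{\gamma^+(0)}{\tau\nabla(-F)(x)}$ therefore equals $-2 d(x, J(x)) d(x, w) \cos\theta$, where $\theta$ is the angle at $x$ between the geodesics to $J(x)$ and to $w$.

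Applying \cref{trigupper} to the geodesic triangle with vertices $x, w, J(x)$ (well-defined since, by the a posteriori hypothesis, these three points lie in the geodesically convex set $A$) bounds this quantity from above by $d^2(w, J(x)) - \frac{\lambda_u}{2} d^2(x, J(x)) - d^2(x, w)$. Substituting back into the descent-lemma estimate, then into the sum of the two a posteriori estimates, and finally regrouping the $d^2(x, J(x))$, $d^2(x, w)$, and $d^2(w, J(x))$ terms produces \cref{key}. The most delicate step is the identification of the descent-lemma inner product with a cosine in the triangle $x, w, J(x)$, since that is what unlocks \cref{trigupper}; the diameter condition on $A$ is precisely what guarantees that $\lambda_u$ and $\lambda_l$ are valid uniformly on the triangles that appear.
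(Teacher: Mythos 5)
Your proposal is correct and follows essentially the same approach as the paper: apply \cref{key2new} to the gradient step, \cref{key1new} (with $w$ in the role of $x$) to the proximal step, and use \cref{descent} together with \cref{trigupper} applied to the triangle $x, w, J(x)$ to convert $-2\tau F(x)$ into $-2\tau F(J(x))$ plus the extra $d^2$ terms. The only difference is cosmetic—you sum the two a posteriori estimates before performing the descent-lemma substitution, whereas the paper substitutes first—and the regrouping produces the same inequality.
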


Much like the corresponding \cref{key1,key2} for the proximal and gradient descent maps, respectively, this estimate will be used to prove convergence results related to the iteration scheme defined by the proximal gradient map.

\begin{proof}
	By \cref{key2new} we have
	\begin{equation}
	\label{1}
		d^2(y, w) \leq d^2(y,x) + 2\tau (F(y) - F(x)) + \frac{\lambda_l}{2} d^2(x, w),
	\end{equation}
	where $w = \desc_{\tau F}(x) = \exp_x(\tau \nabla(-F)(x))$. Assuming that $F$ is uniformly smooth with factor $L>0$, we have by \cref{descent} that
	\begin{equation}
	\label{2}
		-2\tau F(x) \leq -2\tau F(J(x)) + L\tau d^2(x,J(x)) - 2\langle \gamma^+(0), \tau \nabla(-F)(x) \rangle
	\end{equation}
	for a geodesic $\gamma(t)=x \#_t J(x)$.
	By \cref{trigupper} we have
	\begin{equation}
	\label{3}
	\begin{split}
		- 2\langle \gamma^+(0), \tau \nabla(-F)(x) \rangle &= -2d(x,J(x))d(x,w) \cos (\theta) \\
		&\leq d^2(w,J(x)) - \frac{\lambda_u}{2} d^2(x,J(x)) - d^2(x,w),
	\end{split}
	\end{equation}
	where $\theta$ is the angle between geodesics from $x$ to $J(x)$ and $w$. Combining expressions \cref{1,2,3} gives us
	\begin{equation}
	\label{4}
	\begin{split}
		d^2(y, w) \leq d^2(y,x) &+ 2\tau (F(y) - F(J(x))) + (L\tau - \frac{\lambda_u}{2}) d^2(x,J(x)) \\
		&+ \left(\frac{\lambda_l}{2}-1\right) d^2(x,w) + d^2(w,J(x)).
	\end{split}
	\end{equation}
	Finally, let $x=w$ in \cref{key1new} to obtain
	\begin{equation}
	\label{5}
		d^2(y, J(x)) \leq d^2(y,w) + 2\tau (G(y) - G(J(x))) - \frac{\lambda_u}{2}d^2(w, J(x)),
	\end{equation}
	and sum estimates \cref{4,5} to obtain
	\begin{equation*}
	\begin{split}
		d^2(y,J(x)) \leq d^2(y,x) &+ 2\tau (H(y) - H(J(x))) + (L\tau - \frac{\lambda_u}{2})d^2(x,J(x)) \\
		&+ \left(\frac{\lambda_l}{2}-1\right)d^2(x,w) + \left(1 - \frac{\lambda_u}{2}\right)d^2(w, J(x)),
	\end{split}
	\end{equation*}
	where $H(x) = G(x) + F(x)$.
\end{proof}

\section{Forward-backward method}
\label{sec:fb}

We can now consider the algorithm generated by the proximal gradient map $J$ of \cref{proxgrad}. Fixing an arbitrary starting point $x^0 \in X$, the forward-backward method is defined as the iterative process
\begin{equation}
	\label{proxgraditer}
	x^{k+1} = J(x^k), \quad k \geq 0
\end{equation}
for an appropriately chosen positive sequence $\{ \tau_k \}_{k \in \N}$.
We generally denote the intermediate step by $w^k = \desc_{\tau F}(x^k) = \exp_{x^k}(\tau_k \nabla(-F)(x^k))$, so that $x^{k+1}=\prox_{\tau_k G}(w^k)$.

Before showing that method \cref{proxgraditer} converges to a minimum of the function $H = F + G$, let us do some preliminary analysis for cases of specific curvature bounds.  Assume that we can use \cref{keythm} for all $k \geq 0$.

\begin{itemize}
    \item
    $\underline{\kappa} = 0 = \overline{\kappa}$ (flat case: e.g. $\R^n$ or a Hilbert space): now $\lambda_u/2=1=\lambda_l/2$, giving us the standard estimate
    \begin{equation*}
        d^2(y,x^{k+1}) \leq d^2(y,x^k) + 2\tau_k (H(y) - H(x^{k+1})) + (L\tau_k - 1)d^2(x^k,x^{k+1})
    \end{equation*}
    for all $y$.
    Assuming $0 < L \tau_{\min} \leq L \tau_k < 1$ for all $k \geq 0$ establishes the Fej{\'e}r property of the sequence and leads to the usual convergence results.
    \item
    $\underline{\kappa} \leq 0 = \overline{\kappa}$ (e.g. a Hadamard space with an additional lower curvature bound): $\lambda_u/2=1$, giving us the estimate
    \begin{equation*}
    \begin{split}
        d^2(y,x^{k+1}) \leq d^2(y,x^k) &+ 2\tau_k (H(y) - H(x^{k+1})) \\
        &+ (L\tau_k - 1)d^2(x^k,x^{k+1}) + \left(\frac{\lambda_l}{2}-1\right)d^2(x^k,w^k).
    \end{split}
    \end{equation*}
    Since $\lambda_l/2 - 1 \geq 0$, we need to estimate the term $d^2(x^k,w^k)$, for which, by \cref{explip}, we get from Lipschitz continuity of the exponential map (see the dicussion after \cref{def:exp}) that $d^2(x^k,w^k) \leq (L_{\exp} \tau_k |\nabla_- F|(x^k))^2$. By further assuming that $F$ is, for example, Lipschitz continuous with factor $L_F$, we have $|\nabla_- F|(x^k) \leq L_F$ and $(L_{\exp} \tau_k |\nabla_- F|(x^k))^2 \leq (L_{\exp} \tau_k L_F)^2$. This shows that along with the previous assumption of $L \tau_k < 1$, we also need to assume $\sum_{k=0}^{\infty} \tau_k^2 < \infty$ to obtain convergence results.
    \item
    $\underline{\kappa} = 0 \leq \overline{\kappa}$: now $\lambda_l/2=1$, giving us the estimate
    \begin{equation*}
    \begin{split}
        d^2(y,x^{k+1}) \leq d^2(y,x^k) &+ 2\tau_k (H(y) - H(x^{k+1})) \\
        &+ (L\tau_k - \frac{\lambda_u}{2})d^2(x^k,x^{k+1}) + \left(1 - \frac{\lambda_u}{2}\right)d^2(w^k,x^{k+1}).
    \end{split}
    \end{equation*}
    Since $1 - \lambda_u/2 \geq 0$, we need to estimate the term $d^2(w^k,x^{k+1})$ as well as assume that $L \tau_k < \lambda_u/2$. One way of dealing with $d^2(w^k,x^{k+1})$ is to use the definition of the proximal map to get $G(x^{k+1}) + (2\tau_k)^{-1}d^2(w^k,x^{k+1}) \leq G(w^k)$. Assume further that $G$ is Lipschitz continuous with factor $L_G$ so that $d^2(w^k,x^{k+1}) \leq 2 \tau_k [G(w^k) - G(x^{k+1})] \leq 2 \tau_k L_Gd(w^k,x^{k+1})$ giving $d^2(w^k,x^{k+1}) \leq 4 \tau_k^2 L^2_G$. This again suggests that we need to require $\sum_{k=0}^{\infty} \tau_k^2 < \infty$. However, the Lipschitz assumption on $G$ is rather unpleasant as it excludes, for example, the indicator function.
    \item
    $\underline{\kappa} \leq \overline{\kappa}$: here we cannot simplify at all and simply have the original estimate \cref{key}. For convergence results, we should need all of the previous assumptions: $L \tau_k < \lambda_u/2$, $\sum_{k=0}^{\infty} \tau_k^2 < \infty$, and a bound for $d^2(w^k,x^{k+1})$.
\end{itemize}
It appears to be clear from these points, that in the presence of nonzero curvature, the rather strict assumption of square summability of the step sizes is essential in obtaining convergence results. This ultimately arises from the strong convexity and strong concavity factors of the squared distance function in spaces with upper and lower curvature bounds, respectively.

\subsection{Convergence analysis}

Before analysing convergence, we gather the fundamental assumptions regarding the forward-backward algorithm here.

\begin{assumption}
	\label{assum:space}
	The space $(X,d)$ is a complete locally compact bilaterally bounded Alexandrov space with $\partial X = \emptyset$ (from below by $\underline{\kappa} \in \R$ and above by $\overline{\kappa} \in \R$, $\underline{\kappa} \leq \overline{\kappa}$).
\end{assumption}

\begin{assumption}
	\label{assum:func}
	The functions $G, F : X \to \overline{\R}$ are proper, convex, and lower semicontinuous and $F$ is differentiable and uniformly smooth with factor $L>0$ (to be precise, these only need to apply in a closed geodesically convex set containing a nonempty sublevel set of $H = G + F$).
\end{assumption}

\begin{assumption}
	\label{assum:min}
	The point $x^* \in X$ is some global minimiser of $H = G + F$.
\end{assumption}

We first start by showing that the forward-backward method \cref{proxgraditer} creates a monotone sequence of function values of $H$ which in turn shows the convergence of the iterates.
This is still an a posteriori result, which requires first showing that $x^{k+1}, \desc_{\tau F}(x^k) \in A$.

\begin{proposition}[A posteriori monotonicity]
	\label{prop:proxgrad:monotonicity}
	Along with \cref{assum:space}, let $G$ be lower semicontinuous and $F$ differentiable, and uniformly smooth with factor $L>0$. Fix a closed, geodesically convex set $A \subset X$ containing a nonempty sublevel set of $G + F$ and, if $\overline{\kappa} > 0$, such that $\diam A < \pi/(2\sqrt {\overline{\kappa}})$.
    Let $\{ \tau_k \}_{k \in \N}$ be a positive sequence with $L \tau_k < \lambda_u/2$ for all $k \geq 0$.
    If the points $x^k$, $x^{k+1}$, and $w^k = \desc_{\tau F}(x^k)$ generated by the forward-backward method \cref{proxgraditer} belong to $A$, then
	\begin{equation}
        \label{eq:proxgrad:monotonicity}
		H(x^{k+1}) \leq H(x^k) + (\frac{L}{2} - \frac{\lambda_u}{4 \tau_k})d^2(x^k, x^{k+1}).
	\end{equation}
    In particular, the sequence $\{H(x^k)\}_{k \in \N}$ is monotonically nonincreasing. Furthermore, if $x^0 \in$ \emph{dom} $H$ and $\inf H > -\infty$, then $d(x^k,x^{k+1}) \to 0$ as $k \to \infty$.
\end{proposition}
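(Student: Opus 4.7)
The plan is to combine, in a targeted way, the three ingredients that already appear in the derivation of \cref{keythm} --- the descent lemma for $F$, the optimality of the proximal map, and the upper trigonometric bound \cref{trigupper} --- but evaluated specifically at the base point $x^k$ (rather than at a generic test point $y$). With this tailored choice the directional inner-product terms cancel exactly, leaving precisely the right-hand side of \cref{eq:proxgrad:monotonicity}. Setting $y=x$ directly in \cref{keythm} does not work because residual terms in $d^2(x,w)$ and $d^2(w,J(x))$ survive; we therefore redo the combination at a finer level.

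Concretely, I would first fix a geodesic $\gamma(t) = x^k \#_t x^{k+1}$ inside the geodesically convex set $A$ and apply \cref{descent} to get
\[
    F(x^{k+1}) - F(x^k) \leq -\iprod{\gamma^+(0)}{\nabla(-F)(x^k)} + \frac{L}{2} d^2(x^k, x^{k+1}).
\]
Next, using $x^k$ as a competitor in the definition of $x^{k+1} = \prox_{\tau_k G}(w^k)$ yields
\[
    G(x^{k+1}) - G(x^k) \leq \frac{1}{2\tau_k}\bigl[d^2(w^k, x^k) - d^2(w^k, x^{k+1})\bigr].
\]
Then I would apply \cref{trigupper} to the geodesic triangle with vertices $x^k, w^k, x^{k+1}$ in $A$, choosing the side $c = d(x^k, w^k)$ so that the factor $\sqrt{\overline\kappa} c/\tan(\sqrt{\overline\kappa} c) \geq \lambda_u/2$ multiplies $d^2(x^k, x^{k+1})$. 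Since $w^k = \exp_{x^k}(\tau_k\nabla(-F)(x^k))$, we have $\log_{x^k} w^k = \tau_k \nabla(-F)(x^k)$, so the residual cosine term is precisely $2\tau_k \iprod{\gamma^+(0)}{\nabla(-F)(x^k)}$. Substituting the resulting estimate
\[
    d^2(w^k, x^k) - d^2(w^k, x^{k+1}) \leq -\frac{\lambda_u}{2} d^2(x^k, x^{k+1}) + 2\tau_k\iprod{\gamma^+(0)}{\nabla(-F)(x^k)}
\]
into the $G$-bound and summing with the $F$-bound makes the two inner products cancel, producing \cref{eq:proxgrad:monotonicity}.

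The main obstacle is the trigonometric step: one must choose the side $c$ correctly so that $\lambda_u/2$ lands in front of $d^2(x^k, x^{k+1})$ and the residual directional term has precisely the scale $2\tau_k$ needed to cancel, after division by $2\tau_k$ in the $G$-estimate, the inner product coming from the descent lemma. Once \cref{eq:proxgrad:monotonicity} is in place the remaining claims are straightforward: the assumption $L\tau_k < \lambda_u/2$ makes the factor $L/2 - \lambda_u/(4\tau_k)$ negative, so $\{H(x^k)\}$ is monotonically nonincreasing. Combining this monotonicity with $x^0 \in \dom H$ and $\inf H > -\infty$, the sequence $\{H(x^k)\}$ converges, whence $H(x^k) - H(x^{k+1}) \to 0$. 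Reading \cref{eq:proxgrad:monotonicity} as $(\lambda_u/(4\tau_k) - L/2) d^2(x^k, x^{k+1}) \leq H(x^k) - H(x^{k+1}) \to 0$, and using that the coefficient stays uniformly bounded away from zero (the natural reading of the step-size condition $L\tau_k < \lambda_u/2$), yields $d(x^k, x^{k+1}) \to 0$.
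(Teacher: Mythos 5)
Your proposal is correct and reproduces the paper's proof almost exactly: the same three ingredients (descent lemma at $x^k$ along $\gamma(t)=x^k\#_t x^{k+1}$; the sub-optimality inequality $G(x^{k+1})+\frac{1}{2\tau_k}d^2(w^k,x^{k+1})\le G(x^k)+\frac{1}{2\tau_k}d^2(w^k,x^k)$; and \cref{trigupper} applied to the triangle $x^k,w^k,x^{k+1}$ with $c=d(x^k,w^k)$ so that the $\lambda_u/2$ lands on $b^2=d^2(x^k,x^{k+1})$ and the cosine term becomes $2\langle\gamma^+(0),\tau_k\nabla(-F)(x^k)\rangle$), combined in the same order and with the same cancellation of the directional inner products. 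The only cosmetic divergence is at the very end: the paper telescopes $(\lambda_u/(4\tau_k)-L/2)\,d^2(x^k,x^{k+1})\le H(x^k)-H(x^{k+1})$ over $k=0,\dots,N$ to conclude $\sum_k d^2(x^k,x^{k+1})<\infty$, whereas you argue via convergence of the bounded monotone sequence $\{H(x^k)\}$ that the differences $H(x^k)-H(x^{k+1})$ vanish. Both give $d(x^k,x^{k+1})\to 0$, and both rely on the same tacit uniform lower bound $\lambda_u/(4\tau_k)-L/2\ge\epsilon>0$, which you correctly flag as the intended reading of the step-size condition.
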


\begin{proof}
    \Cref{proxgrad} and the iteration scheme \cref{proxgraditer} give
    \begin{equation}
        \label{eq:proxgrad:monotonicity:0}
        G(x^{k+1}) + \frac{1}{2 \tau_k}d^2(w^k, x^{k+1}) \leq G(x^k) + \frac{1}{2 \tau_k}d^2(w^k, x^k),
    \end{equation}
    where $w^k = \desc_{\tau_k F}(x^k) = \exp_{x^k}(\tau_k \nabla(-F)(x^k))$. By \cref{eq:proxgrad:monotonicity:0} and \cref{descent} we have
	\begin{equation}
        \label{eq:proxgrad:monotonicity:1}
        \begin{split}
            H(x^{k+1}) &= G(x^{k+1}) + F(x^{k+1}) \\
            &\leq H(x^k) + \frac{L}{2}d^2(x^k, x^{k+1}) + \frac{1}{2 \tau_k} \bigg( d^2(w^k, x^k) - d^2(w^k, x^{k+1}) \bigg) - \langle \gamma^+(0), \nabla(-F)(x^k) \rangle,
        \end{split}
	\end{equation}
    for a geodesic $\gamma(t) = x^k \#_t x^{k+1}$. Now use \cref{trigupper}, \eqref{eq:pythagoras-replacement} (i.e., our replacement of the Pythagoras' identity in Hilbert spaces) and \cref{innprod} to estimate
	\begin{equation}
        \label{eq:proxgrad:monotonicity:2}
        \begin{split}
            d^2(w^k, x^k) - d^2(w^k, x^{k+1}) &\leq -\frac{\lambda_u}{2} d^2(x^k, x^{k+1}) + 2d(x^k, x^{k+1})d(x^k, w^k) \cos (\theta) \\
            &= -\frac{\lambda_u}{2} d^2(x^k, x^{k+1}) + 2 \langle \gamma^+(0), \tau_k \nabla (-F)(x^k) \rangle.
        \end{split}
	\end{equation}
    Putting \cref{eq:proxgrad:monotonicity:1,eq:proxgrad:monotonicity:2} together gives \cref{eq:proxgrad:monotonicity}.
    The latter implies that the sequence $\{H(x^k)\}_{k \in \N}$ is monotonically nonincreasing when $L \tau_k < \lambda_u/2$ for all $k \geq 0$.

    Now let $\epsilon$ satisfy $\lambda_u/(4 \tau_k) - L/2 \ge \epsilon > 0$ so that \cref{eq:proxgrad:monotonicity} yields
	\[
		\epsilon d^2(x^k, x^{k+1}) \leq (\frac{\lambda_u}{4 \tau_k} - \frac{L}{2})d^2(x^k, x^{k+1}) \leq H(x^k) - H(x^{k+1}).
	\]
    Let $x^0 \in$ dom $H$ and sum over from $0$ to $N$ to get
	\[
		\epsilon \sum_{k=0}^N d^2(x^k, x^{k+1}) \leq \sum_{k=0}^N (H(x^k) - H(x^{k+1})) = H(x^0) - H(x^{N+1}) \leq H(x^0) - \inf H < \infty.
	\]
    It follows that $d(x^k,x^{k+1}) \to 0$ since $\epsilon > 0$.
\end{proof}

As \cref{keythm,prop:proxgrad:monotonicity} need the iterates to stay in the same neighborhood as the reference points, we need to perform some locality analysis to determine when this is true, i.e., how close do we need to start from the reference point and how small do the step sizes need to be for the statement to hold.
The next a priori lemma proves that $x^{k+1}, w^k = \desc_{\tau F}(x^k) \in A$ as required by the former a posteriori results. Note also that here we allow the geodesics from $x^k$ to $w^k$ to be just locally minimising. This discrepancy will be resolved in the convergence proofs with an appropriate diameter bound for $A$.

\begin{lemma}[A priori locality]
	\label{locality}
	Along with \cref{assum:space,assum:min}, let $G, F : X \to \overbar{\R}$, where $G$ is proper and $F$ is differentiable and Lipschitz continuous with factor $L_F$. Let $\{x^k\}_{k \in \N}$ be generated by \cref{proxgraditer} and $x^k \in \widebar{B}(x^*, r)$ for some $r > 0$. If $R > 2r$ and
    \begin{equation}
        \label{localstep}
        0 < \tau_k \leq \frac{R - 2r}{2L_F(1 + L_{\exp})},
    \end{equation}
    then $w^k, x^{k+1} \in \widebar{B}(x^*, R)$.
\end{lemma}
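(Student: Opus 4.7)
The plan is to bound $d(x^*, w^k)$ and $d(x^*, x^{k+1})$ separately by the triangle inequality, using Lipschitz continuity of the exponential map and of $F$, together with the defining minimisation property of the proximal map.

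For $w^k$, I would exploit that $w^k = \exp_{x^k}(\tau_k \nabla(-F)(x^k))$ and that the exponential map is Lipschitz with constant $L_{\exp}$ by \cref{explip}. Since $\|\nabla(-F)(x^k)\| = |\nabla_-F|(x^k) \leq L_F$ by Lipschitz continuity of $F$, this yields $d(x^k, w^k) \leq L_{\exp}\tau_k L_F$, and the triangle inequality gives $d(x^*, w^k) \leq r + L_{\exp}\tau_k L_F$.

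For $x^{k+1} = \prox_{\tau_k G}(w^k)$, the main obstacle is that we are not assuming Lipschitz continuity of $G$, so I cannot directly bound $d(w^k, x^{k+1})$ by comparing values of $G$. The trick is to use that $x^*$ minimises $H = G + F$ to trade an unbounded $G$-difference for a Lipschitz-controlled $F$-difference. Testing the proximal minimisation at $y = x^*$ yields
\[
    d^2(w^k, x^{k+1}) \leq 2\tau_k\bigl(G(x^*) - G(x^{k+1})\bigr) + d^2(w^k, x^*).
\]
Since $x^*$ minimises $G+F$, $G(x^*) - G(x^{k+1}) \leq F(x^{k+1}) - F(x^*) \leq L_F\, d(x^*, x^{k+1})$. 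Denoting $a = d(x^*, x^{k+1})$, $b = d(x^*, w^k)$, $c = d(w^k, x^{k+1})$, the triangle inequality $a \leq b + c$ then turns $c^2 \leq 2\tau_k L_F a + b^2$ into the quadratic inequality $c^2 - 2\tau_k L_F c - (2\tau_k L_F b + b^2) \leq 0$, whose discriminant factors as $4(\tau_k L_F + b)^2$, yielding $c \leq 2\tau_k L_F + b$.

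Putting the pieces together, $a \leq b + c \leq 2b + 2\tau_k L_F \leq 2r + 2\tau_k L_F(1 + L_{\exp})$, and the step-size bound \cref{localstep} immediately gives $a \leq R$. The same step-size bound gives $d(x^*, w^k) \leq r + (R-2r)L_{\exp}/(2(1+L_{\exp})) < r + (R-2r)/2 = R/2 \leq R$, finishing the argument. Note that the locality result is genuinely a priori: it never invokes \cref{key1new,key2new}, which would require geodesic convexity and thus assume the containment we are trying to prove.
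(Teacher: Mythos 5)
Your proof is correct and follows essentially the same route as the paper: bound $d(x^*, w^k)$ via Lipschitz continuity of $F$ and of the exponential map, then use the proximal minimisation inequality at $y = x^*$ together with minimality of $x^*$ for $G+F$ to trade the $G$-difference for a Lipschitz-controlled $F$-difference. The only cosmetic difference is that you solve the resulting quadratic in $c = d(w^k, x^{k+1})$ before applying the triangle inequality, whereas the paper applies the triangle inequality first and isolates $a = d(x^*, x^{k+1})$ directly; both land on the identical intermediate bound $d(x^*,x^{k+1}) \le 2d(x^*,w^k) + 2\tau_k L_F$.
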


\begin{proof}
    The definition of the proximal map gives
	\begin{equation}
		\label{eq:localstep:prox}
		G(x^{k+1}) + \frac{1}{2\tau_k}d^2(x^{k+1},w^k) \leq G(y) + \frac{1}{2\tau_k}d^2(y,w^k)
        \quad\text{for all}\quad
        y \in X,
	\end{equation}
    where $x^{k+1} = \prox_{\tau_k G}(w^k)$ and $w^k = \exp_{x^k}(\tau_k \nabla(-F)(x^k))$. Since $G(y) + F(y) \leq G(x^{k+1}) + F(x^{k+1})$ for some global minimising point $y = x^*$, then immediately
	\[
		d^2(x^{k+1},w^k) \leq 2 \tau_k (F(x^{k+1})-F(x^*)) + d^2(x^*,w^k) \leq 2\tau_k L_F d(x^{k+1},x^*) + d^2(x^*,w^k)
	\]
    by the Lipschitz continuity of $F$. By this and the triangle inequality
	\[
		d(x^{k+1},x^*) \leq d(x^{k+1},w^k) + d(x^*,w^k) \leq \sqrt{2\tau_k L_F d(x^{k+1},x^*) + d^2(x^*,w^k)} + d(x^*,w^k).
	\]
    Thus, rearranged and squared
	\[
        \begin{split}
            d^2(x^{k+1},x^*)+d^2(x^*,w^k) - 2d(x^{k+1},x^*)d(x^*,w^k) &=
            (d(x^{k+1},x^*)-d(x^*,w^k))^2 \\
            &\le
            2\tau_k L_F d(x^{k+1},x^*) + d^2(x^*,w^k).
        \end{split}
	\]
	That is,
	\[
		d^2(x^{k+1},x^*) - 2d(x^{k+1},x^*)d(x^*,w^k)
		\le
		2\tau_k L_F d(x^{k+1},x^*),
	\]
	which divided by $d(x^{k+1},x^*)$ when this quantity is non-zero, gives
	\[
		d(x^{k+1},x^*)
		\le
		2\tau_k L_F
		+ 2d(x^*,w^k).
	\]
    Obviously, this inequality also holds when $d(x^{k+1},x^*)=0$.
    By the triangle inequality, \cref{explip}, and the Lipschitz continuity of $F$ and of the exponential map (see the dicussion after \cref{def:exp}), we have
    \begin{equation}
        \label{localwbound}
        \begin{aligned}[t]
            d(x^*,w^k) &\leq d(x^*,x^k) + d(x^k,\exp_{x^k} (\tau_k \nabla (-F)(x^k))) \\
            &\leq d(x^*,x^k) + L_{\exp}\tau_k |\nabla_-F|(x^k) \\
            &\leq d(x^*,x^k) + L_F L_{\exp}\tau_k.
        \end{aligned}
    \end{equation}
    Thus,
    \begin{equation}
        \label{localbound}
        d(x^{k+1},x^*) \leq 2d(x^*,x^k) + 2L_F (1 + L_{\exp})\tau_k.
    \end{equation}
    Now, using \cref{localstep} and $x^k \in \widebar{B}(x^*, r)$ in \cref{localwbound} and \cref{localbound}, respectively, we obtain
    \[
        d(x^*,w^k) \leq r + L_F L_{\exp}\frac{R - 2r}{2L_F(1 + L_{\exp})} \leq r + L_{\exp}\frac{R - 2r}{2L_{\exp}} = \frac{R}{2} < R,
    \]
    as well as
    \[
        d(x^{k+1},x^*) \leq 2r + 2L_F(1 + L_{\exp})\frac{R - 2r}{2L_F(1 + L_{\exp})} = R.
    \]
    In other words, $x^{k+1}, w^k \in \widebar{B}(x^*, R)$.
\end{proof}

\begin{remark}[Combined step with lower bound only]
    \label{rem:total-surrogate}
    For any instance of the possibly non-unique (when $(X, d)$ is not upper-bounded) $\log_{\thisx} x$, let us define $\nextx$ as a minimiser of the \term{total surrogate}
    \[
        \phi_k(x) \defeq G(x) + F(\thisx) + D_{\thisx}F(\log_{\thisx} x) + \frac{1}{2\tau_k} d^2(\thisx, x);
    \]
    such methods have been studied in the Riemannian setting in \cite{stiefelproxgrad,riemproxgradhuang}. In a Hilbert space, this step is equivalent to \eqref{proxgraditer}.
    In a general Alexandrov space, we may not have equivalency.
    Nevertheless, we have by definition that $\phi_k(\nextx) \le \phi_k(\thisx)$, which combined with the descent inequality \eqref{eq:descent:general} for a uniformly smooth $F$ yields
    \[
        [F+G](\nextx) + \frac{\inv\tau_k - L}{2} d^2(\thisx, \nextx) \le [F+G](\thisx).
    \]
    Thus, by standard technique, if $F+G > -\infty$ and $\sup_k \tau_k L < 1$, we obtain $\sum_{k=0}^\infty d^2(\thisx, \nextx) <\infty$, hence $d^2(\thisx, \nextx) \to 0$.
    In a Hilbert space, this property readily implies convergence of subdifferentials to zero; see, e.g, \cite[Theorem 4.19]{tuomov2024tracking}.
    In a general Alexandrov space, such a characterisation appears more challenging to obtain, however, assuming for a subsequence that both $x^{k_\ell}$ and $x^{k_\ell+1}$ are, e.g., on the same polyhedron of a polyhedral complex, we can use the Hilbert space characterisation to obtain subdifferential convergence for the subsequence.
    Through the equivalency to \eqref{proxgraditer} in this case, this holds even for our proposed method with split steps.
\end{remark}

When $G$ and $F$ are assumed to be Lipschitz continuous, the convergence of the forward-backward method to a minimiser of $H$ is an application of \cref{locality} and the following deterministic variant of the Siegmund--Robbins lemma.

\begin{lemma}[\cite{robbins1971convergence}]
	\label{quasivariant}
	Let $\{ a_k \}_{k \in \N}$, $\{ b_k \}_{k \in \N}$ and $\{ c_k \}_{k \in \N}$ be sequences of nonnegative real numbers such that $a_{k+1} \leq a_k - b_k + c_k$ for each $k \geq 0$ and $\sum^{\infty}_{k=0} c_k < \infty$. Then the sequence $\{ a_k \}_{k \in \N}$ converges and $\sum^{\infty}_{k=0} b_k < \infty$.
\end{lemma}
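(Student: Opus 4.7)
The plan is to prove both conclusions simultaneously by the standard ``telescope plus monotone auxiliary sequence'' trick. First, I would rearrange the recursive inequality as $a_{k+1} + b_k \le a_k + c_k$, which is a convenient form for telescoping.

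For the summability of $\{b_k\}$, I would sum this inequality from $k = 0$ to $N$. The $a$-terms telescope to leave $a_{N+1} + \sum_{k=0}^N b_k \le a_0 + \sum_{k=0}^N c_k$. Since $a_{N+1} \ge 0$ and $\sum_{k=0}^\infty c_k < \infty$, this gives $\sum_{k=0}^N b_k \le a_0 + \sum_{k=0}^\infty c_k$, a bound independent of $N$. Letting $N \to \infty$ yields $\sum_{k=0}^\infty b_k < \infty$.

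For the convergence of $\{a_k\}$, I would introduce the auxiliary sequence $t_k \defeq a_k + \sum_{i=k}^\infty c_i$, which is well-defined and nonnegative thanks to the tail summability of $\{c_k\}$. A direct computation using the hypothesis gives
\[
    t_{k+1} = a_{k+1} + \sum_{i=k+1}^\infty c_i \le a_k - b_k + c_k + \sum_{i=k+1}^\infty c_i = t_k - b_k \le t_k,
\]
so $\{t_k\}$ is monotonically nonincreasing and bounded below by $0$, and therefore converges to some $t^\ast \ge 0$. Because $\sum_{i=k}^\infty c_i \to 0$ as $k \to \infty$ (tail of a convergent series), we conclude that $a_k = t_k - \sum_{i=k}^\infty c_i \to t^\ast$ as well.

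This proof has no real obstacles — it is a routine manipulation — and the only subtlety is remembering that the auxiliary sequence $t_k$ (rather than $a_k$ itself) is the one that is actually monotone. Without this shift, one might mistakenly try to argue monotonicity of $\{a_k\}$ directly, which fails because the error term $c_k$ can push $a_{k+1}$ above $a_k$.
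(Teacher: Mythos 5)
Your proof is correct. The paper states this lemma with a citation to Robbins and Siegmund and does not supply a proof of its own, so there is no in-paper argument to compare against; your two-part argument (telescoping $a_{k+1}+b_k\le a_k+c_k$ to bound $\sum b_k$, then using the shifted sequence $t_k = a_k + \sum_{i\ge k}c_i$ to obtain a monotone, bounded-below sequence whose limit equals that of $a_k$) is the standard and complete derivation of this deterministic Siegmund--Robbins lemma.
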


\begin{theorem}[Convergence to a minimiser, Lipschitz continuous $G$ and $F$]
	\label{lip}
	Along with \cref{assum:space,assum:func,assum:min}, let $G$ and $F$ be Lipschitz continuous with factors $L_G$ and $L_F$, respectively. Let $x^0 \in \widebar{B}(x^*,r)$ for $r>0$ and $R>2r$ such that $R < \pi/(4\sqrt{\overline{\kappa}})$. Let $\{ \tau_k \}_{k \in \N}$ be a positive sequence of step sizes satisfying $\sum_{k=0}^{\infty} \tau_k = \infty$, $\sum_{k=0}^{\infty} \tau_k^2 < \infty$, and
	\begin{equation*}
		\tau_k \leq \min \bigg\{\frac{\lambda_u}{2L}, \frac{R - 2r}{2L_F(1 + L_{\exp})} \bigg\}
	\end{equation*}
	for all $k \geq 0$.
    If $\inf_{x \in X} H(x)$ is attained at some point, then the sequence $\{ x^k \}_{k \in \N}$ created by the iterative process \cref{proxgraditer} converges to some global minimiser of $H$ in $\widebar{B}(x^*,R)$ as $k \to \infty$.
\end{theorem}

\begin{proof}
    Set $A \defeq \widebar{B}(x^*,R)$ so that $\diam A \le 2R < \pi/(2\sqrt{\overline{\kappa}})$.
    Let $c \defeq \sum_{k=0}^\infty c_k$, as well as $R \defeq \sqrt{r^2 + c}$ for
    \[
		c_k = \bigg(\left(\frac{\lambda_l}{2}-1\right)L_{\exp}^2L_F^2 + \left(1 - \frac{\lambda_u}{2}\right)4L^2_G\bigg)\tau_k^2.
	\]
    We will, along the course of the proof, establish by induction that $x^k \in \widebar{B}(x^*,R)$ for all $k \in \N$.
    By assumption, we have $x^0 \in \widebar{B}(x^*, r) \subset A$, taking care of the inductive basis.
    So, for the inductive step, suppose $\{x^j\}_{j=0}^k \subset \widebar{B}(x^*,r)$.
    By \cref{locality}, we have $\nextx \in A$, so we may apply \cref{keythm}.
	The Lipschitz continuity of $G$ and the definition of the proximal map establish
    \[
        \frac{1}{2\tau_k}d^2(\nextx, \this w) \le G(\this w) - G(\nextx) \le L_G d(\nextx, \this w),
    \]
    hence $d^2(w^k,x^{k+1}) \leq 4 \tau_k^2 L^2_G$.
    Likewise, the Lipschitz continuity of $F$ and the exponential map yield $d^2(x^k,w^k) \leq L_{\exp}^2 \tau_k^2 L_F^2$ (see the examples from the beginning of \cref{sec:fb}).
    Using these estimates in the claim \eqref{key} of \cref{keythm} establishes
	\begin{equation}
        \label{eq:lip:descent}
        d^2(x^*,x^{k+1}) \leq d^2(x^*,x^k) - 2\tau_k (H(x^{k+1}) - H(x^*)) + c_k
	\end{equation}
    for all $k \in \N$.
    Through our inductive argument, we thus have $d^2(x^*,x^{j+1}) \leq d^2(x^*,x^j) + c_j$ for all $j=0,\ldots,k$.
    Summing, this establishes $\nextx \in \widebar{B}(x^*,R)$, completing the inductive step.

    Thus $x^k \in \widebar{B}(x^*,R)$ and \eqref{eq:lip:descent} for all $k \in \N$.
    Set $a_k \coloneqq d^2(x^*,x^k)$, $b_k \coloneqq 2\tau_k (H(x^{k+1}) - H(x^*))$.
    The sequence $\{ b_k \}_{k \in \N}$ is positive due to $H(x^*) \leq H(x^{k+1})$ and $\tau_k > 0$, whereas the positivity of $\{ c_k \}_{k \in \N}$ follows from $\lambda_l/2 \geq 1$ and $\lambda_u/2 \leq 1$.
    By \eqref{eq:lip:descent}, we have $a_{k+1} \leq a_k - b_k + c_k$.
    Applying \cref{quasivariant} now implies the convergence of the sequence $\{ d^2(x^*,x^k) \}_{k \in \N}$, boundedness of $\{ x^k \}_{k \in \N}$, as well as
	\[
		\sum^{\infty}_{k=0}2\tau_k (H(x^{k+1}) - H(x^*)) < \infty.
	\]
	This, with the assumption $\sum_{k=0}^{\infty} \tau_k = \infty$, implies that there exists a subsequence $\{ x^{k_i} \}_{i \in \N}$ of $\{ x^k \}_{k \in \N}$ such that $\lim_{i \to \infty} H(x^{k_i}) = H(x^*)$. Since $\{ x^{k_i} \}_{i \in \N}$ is bounded, it has by local compactness a subsequence converging to a point $\hat{x} \in \widebar{B}(x^*,R)$. By the lower semicontinuity of $H$ it must be a minimiser thus implying that the sequence $\{ d^2(\hat{x},x^k) \}_{k \in \N}$ converges to $0$, since a subsequence of $\{ x^k \}_{k \in \N}$ converges to $\hat{x}$. Hence we have that $\lim_{k \to \infty} x^k = \hat{x}$.
\end{proof}

With these assumptions, this method enjoys the following iteration-complexity bound.

\begin{corollary}[Iteration-complexity, Lipschitz continuous $G$ and $F$]
	\label{itcompllip}
    Under the assumptions of \cref{lip}, for all $N \in \N$, we have
	\[
		H(x^{N}) - H(x^*) \leq \frac{d^2(x^*,x^0) + (\left(\frac{\lambda_l}{2}-1\right)L_{\exp}^2L_F^2 + \left(1 - \frac{\lambda_u}{2}\right)4L^2_G) \sum_{k=0}^{N-1} \tau_k^2}{2 \sum_{k=0}^{N-1} \tau_k}.
	\]

\end{corollary}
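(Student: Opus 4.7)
The plan is to combine the telescoping inequality already derived in the proof of \cref{lip} with the monotonicity of $\{H(\thisx)\}$ from \cref{prop:proxgrad:monotonicity}, which holds because the step-size bound \cref{stepsizebound} forces $L\tau_k \leq \lambda_u/2$ and \cref{locality} (applied inductively, exactly as in \cref{lip}) guarantees $\thisx, \this{w}, \nexxt{x} \in \widebar{B}(x^*,R) \eqqcolon A$ so that \cref{keythm} is applicable with the appropriate $\lambda_l, \lambda_u$.

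First I would recall from the inductive argument in \cref{lip} that, with $c_k \coloneqq \bigl((\lambda_l/2-1)L_{\exp}^2L_F^2 + (1 - \lambda_u/2)4L^2_G\bigr)\tau_k^2$, we have the a posteriori descent inequality
\[
    d^2(x^*,\nexxt{x}) \leq d^2(x^*,\thisx) - 2\tau_k (H(\nexxt{x}) - H(x^*)) + c_k
\]
for every $k \in \N$. Summing over $k=0,1,\dots,N-1$ and using nonnegativity of $d^2(x^*,x^N)$ yields the telescoped bound
\[
    \sum_{k=0}^{N-1} 2\tau_k (H(\nexxt{x}) - H(x^*)) \le d^2(x^*,x^0) + \sum_{k=0}^{N-1} c_k.
\]

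Next I would invoke \cref{prop:proxgrad:monotonicity} to exploit that $\{H(\thisx)\}_{k \in \N}$ is monotonically nonincreasing under the hypotheses of \cref{lip}; in particular $H(x^N) \le H(\nexxt{x})$ for every $k=0,\dots,N-1$, so $H(x^N) - H(x^*) \le H(\nexxt{x}) - H(x^*)$. Substituting this into the left hand side of the telescoped bound produces
\[
    2\Bigl(\sum_{k=0}^{N-1} \tau_k\Bigr)(H(x^N) - H(x^*)) \le d^2(x^*,x^0) + \sum_{k=0}^{N-1} c_k,
\]
and dividing by $2\sum_{k=0}^{N-1}\tau_k > 0$ gives exactly the stated estimate.

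There is no real obstacle; the one point that deserves care is verifying that the hypotheses for invoking \cref{keythm} and \cref{prop:proxgrad:monotonicity} propagate to every $k$, which is precisely the content of the inductive locality argument already carried out in \cref{lip}. Since $\lambda_l/2 \ge 1 \ge \lambda_u/2$ in our bilaterally bounded setting, each $c_k$ is nonnegative, so no signs flip.
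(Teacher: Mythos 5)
Your proof is correct and takes essentially the same route as the paper: telescope the descent inequality \eqref{eq:lip:descent} from $k=0$ to $N-1$, invoke the monotonicity of $\{H(x^k)\}$ from \cref{prop:proxgrad:monotonicity} to bound each $H(x^{k+1})-H(x^*)$ from below by $H(x^N)-H(x^*)$, factor out the sum of step sizes, and divide. The only presentational difference is that you explicitly spell out the nonnegativity of $d^2(x^*,x^N)$ in the telescoping step and the inductive locality argument that makes \cref{keythm} and \cref{prop:proxgrad:monotonicity} applicable at every $k$, which the paper leaves implicit by citing \cref{eq:lip:descent}.
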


\begin{proof}
    Let $y=x^*$  and sum over \cref{eq:lip:descent} from $0$ to $N-1$ to get
	\[
    	\sum_{k=0}^{N-1} 2\tau_k (H(x^{k+1}) - H(x^*)) \leq d^2(x^*, x^0) + \bigg(\left(\frac{\lambda_l}{2}-1\right)L_{\exp}^2L_F^2 + \left(1 - \frac{\lambda_u}{2}\right)4L^2_G \bigg) \sum_{k=0}^{N-1} \tau_k^2.
	\]
    Since $H(x^k)$ is monotone nonincreasing by \cref{prop:proxgrad:monotonicity}, $H(x^{N-1}) \leq H(x^k)$ for all $k=0,1\ldots,N-1$ gives us the result
	\[
        \begin{split}
            2(H(x^N) - H(x^*)) \sum_{k=0}^{N-1} \tau_k &\leq \sum_{k=0}^{N-1} 2\tau_k (H(x^{k+1}) - H(x^*)) \\
            &\leq d^2(x^*, x^0) + \bigg(\left(\frac{\lambda_l}{2}-1\right)L_{\exp}^2L_F^2 + \left(1 - \frac{\lambda_u}{2}\right)4L^2_G \bigg) \sum_{k=0}^{N-1} \tau_k^2.
            \qedhere
        \end{split}
	\]
\end{proof}

\begin{remark}
As noted before, the Lipschitz condition on $G$ is not necessary when the curvature is bounded from above by $0$ (since $1-\lambda_u/2 = 0$).
\end{remark}

With an additional restriction on the diameter of the neighborhood of the minimising point and some penalty on the step sizes (due to said restriction), we can also prove convergence to a minimiser without the rather restrictive Lipschitz assumption on $G$.

\begin{theorem}[Convergence to a minimiser, Lipschitz continuous F]
	\label{nolip}
    Along with \cref{assum:space,assum:func,assum:min}, let $F$ be Lipschitz continuous with factor $L_F$. Let $x^0 \in \widebar{B}(x^*,r)$ for $r>0$ and $R>2r$ such that $R < \pi/(4\sqrt{\overline{\kappa}})$ and $\lambda_u/2 = 2R\sqrt{\overline{\kappa}}/\tan(2R\sqrt{\overline{\kappa}}) > 1/2$ hold (see \cref{assumlambda}). Let $\{ \tau_k \}_{k \in \N}$ be a positive sequence of step sizes satisfying $\sum_{k=0}^{\infty} \tau_k = \infty$, $\sum_{k=0}^{\infty} \tau_k^2 < \infty$, and
	\begin{equation*}
		\tau_k < \min \bigg\{\frac{\lambda_u - 1}{L}, \frac{R - 2r}{2L_F(1 + L_{\exp})} \bigg\}
	\end{equation*}
	for all $k \geq 0$. If $\inf_{x \in X} H(x)$ is attained at some point, then the sequence $\{ x^k \}_{k \in \N}$ created by the iterative process \cref{proxgraditer} converges to some global minimiser of $H$ in $\widebar{B}(x^*,R)$ as $k \to \infty$.
\end{theorem}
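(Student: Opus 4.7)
The plan is to mirror the proof of \cref{lip}, substituting a new estimate of $d^2(w^k, x^{k+1})$ that does not invoke Lipschitz continuity of $G$. As before, we set $A \defeq \widebar{B}(x^*, R)$ and establish by induction, via \cref{locality} and the Fej\'er-type recursion derived below, that $x^k, w^k, x^{k+1} \in A$ for all $k$; the hypothesis $R < \pi/(4\sqrt{\overline{\kappa}})$ guarantees $\diam A < \pi/(2\sqrt{\overline{\kappa}})$ so that the key \cref{keythm} applies with $y = x^*$.

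For the term $(1-\lambda_u/2) d^2(w^k, x^{k+1})$ in the key inequality, we use the proximal inequality with $y = x^k$,
\begin{equation*}
    d^2(w^k, x^{k+1}) \le d^2(w^k, x^k) + 2\tau_k\bigl(G(x^k) - G(x^{k+1})\bigr),
\end{equation*}
together with the Lipschitz bound $G(x^k) - G(x^{k+1}) \le [H(x^k) - H(x^{k+1})] + L_F\, d(x^k, x^{k+1})$ obtained by passing through $H = F + G$. Young's inequality with parameter $\epsilon = 1 - \lambda_u/2$ applied to the cross term $2\tau_k(1-\lambda_u/2) L_F\, d(x^k, x^{k+1})$ then contributes $(1 - \lambda_u/2) d^2(x^k, x^{k+1}) + (1 - \lambda_u/2) \tau_k^2 L_F^2$. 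The assumption \cref{assumlambda}, which in the relevant regime restricts $\lambda_u \in (1, 2]$, guarantees $\lambda_u - 1 \le \lambda_u/2$, so the step-size bound $L\tau_k < \lambda_u - 1$ forces the accumulated coefficient $L\tau_k - \lambda_u/2 + (1 - \lambda_u/2) = L\tau_k + 1 - \lambda_u$ of $d^2(x^k, x^{k+1})$ to be strictly negative; that term is therefore dropped. Combined with $d^2(x^k, w^k) \le L_{\exp}^2 L_F^2 \tau_k^2$ from \cref{explip} and Lipschitz $F$, we arrive at
\begin{equation*}
    d^2(x^*, x^{k+1}) \le d^2(x^*, x^k) - 2\tau_k[H(x^{k+1}) - H(x^*)] + 2\tau_k(1 - \tfrac{\lambda_u}{2})[H(x^k) - H(x^{k+1})] + C L_F^2 \tau_k^2,
\end{equation*}
with $C$ depending only on $\lambda_l$, $\lambda_u$, and $L_{\exp}$.

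The rest follows standard lines. Writing $a_k \defeq d^2(x^*, x^k)$ and $b_k \defeq 2\tau_k[H(x^{k+1}) - H(x^*)] \ge 0$, the error $\tilde c_k$ absorbing the last two displayed terms is summable: $\sum \tau_k^2 < \infty$ by assumption, and by the monotonicity \cref{prop:proxgrad:monotonicity} and the existence of a minimiser the telescoping sum $\sum_k \tau_k[H(x^k) - H(x^{k+1})] \le (\sup_k \tau_k)[H(x^0) - \inf H]$ is finite. \cref{quasivariant} then gives convergence of $\{a_k\}$, boundedness of $\{x^k\}$, and $\sum_k \tau_k[H(x^{k+1}) - H(x^*)] < \infty$; combined with $\sum \tau_k = \infty$ we extract a subsequence $H(x^{k_i}) \to H(x^*)$, and local compactness together with lower semicontinuity of $H$ yields a cluster minimiser $\hat x \in \widebar{B}(x^*, R)$. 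Repeating the recursion with centre $\hat x$ upgrades subsequential convergence to $x^k \to \hat x$. The primary obstacle throughout is the Young absorption of the second paragraph: without Lipschitz continuity of $G$ we must pay with a linear-in-$d(x^k, x^{k+1})$ error, and $\lambda_u/2 > 1/2$ is precisely the slack needed for the total $d^2(x^k, x^{k+1})$-coefficient to remain negative.
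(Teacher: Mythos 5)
Your proof is correct and takes a genuinely different route from the paper's. The paper's proof of this theorem bounds the troublesome term $(1-\lambda_u/2)\,d^2(w^k,x^{k+1})$ via the triangle inequality $d(w^k,x^{k+1}) \le d(w^k,x^k) + d(x^k,x^{k+1})$ followed by Young's inequality with a free parameter $\epsilon$, producing $(1+\epsilon^{-1})d^2(w^k,x^k) + (1+\epsilon)d^2(x^k,x^{k+1})$; the resulting error $c_k$ is a pure $O(\tau_k^2)$ term, and the existence of an admissible $\epsilon > 0$ is ensured by the strictness of the hypotheses $\lambda_u/2 > 1/2$ and $L\tau_k < \lambda_u - 1$. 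You instead bound $d^2(w^k,x^{k+1})$ using the proximal optimality inequality at $y=x^k$ (the same inequality \eqref{eq:proxgrad:monotonicity:0} used in \cref{prop:proxgrad:monotonicity}), then convert $G(x^k)-G(x^{k+1})$ into $H(x^k)-H(x^{k+1}) + L_F d(x^k,x^{k+1})$, and absorb the cross term with Young calibrated at $\delta = 1-\lambda_u/2$. Both computations arrive at the same negative $d^2(x^k,x^{k+1})$-coefficient $L\tau_k + 1 - \lambda_u$, and both need the observation $\lambda_u - 1 \le \lambda_u/2$ (so that the step-size bound also triggers the monotonicity of \cref{prop:proxgrad:monotonicity}).

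What the two routes buy is different. The paper's is purely metric: no function values appear in $c_k$, so $\sum c_k < \infty$ follows immediately from $\sum \tau_k^2 < \infty$ and the induction closing the locality bound is mechanical. Your route is slightly more informative (it exposes exactly how much of the per-iteration error is controlled by the decrease $H(x^k)-H(x^{k+1})$) and avoids the free-parameter $\epsilon$ bookkeeping, but pays for it: your $c_k$ contains the term $2\tau_k(1-\lambda_u/2)[H(x^k)-H(x^{k+1})]$, which is only $O(\tau_k)$ and not summable a priori. Summability of its sum therefore rests on two additional ingredients not needed in the paper's argument — the monotonicity of $H(x^k)$ from \cref{prop:proxgrad:monotonicity} (which itself must be verified along the induction, since it too requires the iterates to stay in $A$), and the implicit assumption $x^0 \in \dom H$ so that the telescoping bound $(\sup_k\tau_k)[H(x^0) - \inf H]$ is finite. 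The latter is a harmless and natural additional hypothesis (the paper invokes it in \cref{prop:proxgrad:monotonicity} anyway) but, strictly speaking, the theorem as stated does not guarantee it, whereas the paper's purely $O(\tau_k^2)$ error sidesteps the issue entirely. You should make that hypothesis explicit and note that the degenerate case $\lambda_u/2 = 1$ (upper curvature bound $\overline\kappa \le 0$) makes the $(1-\lambda_u/2)d^2(w^k,x^{k+1})$ term vanish, so the Young step is vacuous there rather than an application with $\delta = 0$.
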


\begin{proof}
    The overall outline of the proof is analogous to \cref{lip}, however the constants $c_k$, estimating the final terms of the claim of \cref{keythm}, will be different. In particular, we will estimate $d^2(w^k,x^{k+1})$ differently, to avoid imposing the Lipschitz requirement on $G$.

    Since the inequalities $\lambda_u/2 > 1/2$ and $L\tau_k < \lambda_u - 1$ are strict, there exists an $\epsilon > 0$ such that $\lambda_u/2 > (\epsilon + 1)/(\epsilon + 2)$ and $L\tau_k < \lambda_u(\epsilon + 2)/2 - 1 - \epsilon$. By the triangle and Young's inequalities, we have
    \[
    \begin{split}
        d^2(w^k,x^{k+1}) &\leq d^2(w^k,x^k) + d^2(x^k,x^{k+1}) + 2d(x^k,x^{k+1})d(w^k,x^k) \\
        &\leq (1 + \epsilon^{-1})d^2(w^k,x^k) + (1 + \epsilon)d^2(x^k,x^{k+1}).
    \end{split}
    \]
    This and \cref{keythm} with $y=x^*$ and $L\tau_k < \lambda_u(\epsilon + 2)/2 - 1 - \epsilon$ give, for $x^k, w^k, x^{k+1} \in \widebar{B}(x^*,R)$,
	\[
	\begin{split}
		d^2(x^*,x^{k+1}) &\leq d^2(x^*,x^k) + 2\tau_k (H(x^*) - H(x^{k+1})) + (L\tau_k - \frac{\lambda_u}{2})d^2(x^k,x^{k+1}) + \left(\frac{\lambda_l}{2}-1\right)d^2(x^k,w^k) \\
		& \quad + \left(1 - \frac{\lambda_u}{2}\right)d^2(w^k,x^{k+1}) \\
		&\leq d^2(x^*,x^k) + 2\tau_k (H(x^*) - H(x^{k+1})) + (L\tau_k - \frac{\lambda_u}{2})d^2(x^k,x^{k+1}) + \left(\frac{\lambda_l}{2}-1\right)d^2(x^k,w^k) \\
		& \quad + (1-\frac{\lambda_u}{2})(1 + \epsilon^{-1})d^2(w^k,x^k) + (1-\frac{\lambda_u}{2})(1 + \epsilon)d^2(x^k,x^{k+1}) \\
		&\leq d^2(x^*,x^k) - 2\tau_k (H(x^{k+1}) - H(x^*)) + (\frac{\lambda_l}{2} + \epsilon^{-1} - \frac{\lambda_u(1 + \epsilon)}{2\epsilon})d^2(x^k,w^k).
	\end{split}
	\]
    Using the Lipschitz continuity of the exponential map and $F$ grants us the estimate $d^2(x^k,w^k) \leq L_{\exp}^2 \tau_k^2 L_F^2$, further giving
	\begin{equation*}
		d^2(x^*,x^{k+1}) \leq d^2(x^*,x^k) - 2\tau_k (H(x^{k+1}) - H(x^*)) + c_k,
	\end{equation*}
    i.e., \eqref{eq:lip:descent}, now with
	\[
		c_k = \bigg( \left(\frac{\lambda_l}{2} + \epsilon^{-1} - \frac{\lambda_u(1 + \epsilon)}{2\epsilon}\right)L_{\exp}^2L_F^2 \bigg)\tau_k^2.
	\]
	Note that $c_k \geq 0$ for all $k \geq 0$, since $ \lambda_l/2 \geq 1$, $\lambda_u/2 \leq 1$, and thus $\lambda_l/2 + \epsilon^{-1} - \lambda_u(1 + \epsilon)/(2\epsilon) \geq 1 + \epsilon^{-1} - (1 + \epsilon)/\epsilon = 0$.

    The rest of the proof is exactly as that of \cref{lip}.
\end{proof}

\begin{remark}
	\label{assumlambda}
    The assumption $\lambda_u/2 > 1/2$ is necessitated by the requirements  $L\tau_k + 1 + - \lambda_u < 0$ and $L\tau_k > 0$. This can be met, as the factor $\lambda_u/2 = 2R\sqrt{\overline{\kappa}}/\tan(2R\sqrt{\overline{\kappa}})$ can be made to be as close to $1$ as we wish by making $R$ smaller (and hence $\diam \widebar{B}(x^*,R)$ smaller). Preferably, the factor would be as small as possible to have the diameter be as large as possible.
\end{remark}

For these step size restrictions, we have the following iteration-complexity bound for the forward-backward method.

\begin{corollary}[Iteration-complexity, Lipschitz continuous $F$]
	\label{iter}
	Under the assumptions of \cref{nolip} and some choice of $\epsilon > 0$, for all $N \in \N$, we have
	\[
		H(x^{N}) - H(x^*) \leq L_F \frac{d^2(x^0,x^*) + (\frac{\lambda_l}{2} + \epsilon^{-1} - \frac{\lambda_u(1 + \epsilon)}{2\epsilon})\sum_{k=0}^{N-1}\alpha_k^2}{2 \sum_{k=0}^{N-1} \alpha_k},
	\]
    where $\alpha_k = \tau_k |\nabla_-F|(x^k) L_{\exp} $, for $k=0,1,\ldots$.
\end{corollary}

\begin{proof}
    Same as \cref{iter1} with added monotonicity of $H(x^k)$ as in \cref{itcompllip}.
\end{proof}

\begin{remark}
	As noted before, in the presence of a nonzero curvature bound, the square summability of the step sizes seems to always be necessary in obtaining convergence results; square summability ensures that the step sizes are small enough while the nonsummability of the general sequence keeps them from being too small (guaranteeing convergence).
\end{remark}

For a more general case of the key inequality, we may consider the instance when $G$ and $F$ can also be strongly convex.

\begin{lemma}[Key lemma, strongly convex $G$ and $F$]
	\label{keythmlambda}
	Along with \cref{assum:space,assum:func}, let $G$ and $F$ be $\lambda$-convex with factors $\lambda_G, \lambda_F \geq 0$, respectively. Fix the step size $\tau > 0$ and a closed, geodesically convex set $A \subset X$ ($\diam A < \pi/(2\sqrt {\overline{\kappa}})$, if $\overline{\kappa} > 0$) containing a nonempty sublevel set of $G + F$. The key inequality \cref{key} now reads
\begin{equation}
	\label{lc1}
	\begin{split}
		(1 + \lambda_G \tau)d^2(y,J(x)) &\leq (1 - \lambda_F \tau)d^2(y,x) + 2\tau (H(y) - H(J(x))) + (L\tau - \frac{\lambda_u}{2})d^2(x,J(x)) \\
		&+ \left(\frac{\lambda_l}{2}-1\right)d^2(x,w) + \left(1 - \frac{\lambda_u}{2}\right)d^2(w, J(x))
	\end{split}
\end{equation}
for all $x, y \in A$ satisfying $w, J(x) \in A$, where $H(x) = G(x) + F(x)$, $w = \exp_{x^k}(\tau \nabla(-F)(x))$, $\lambda_u/2 = \sqrt{\overline{\kappa}}\diam A/\tan(\sqrt{\overline{\kappa}}\diam A)$, and $\lambda_l/2 = \sqrt{-\underline{\kappa}}\diam A/\tanh(\sqrt{-\underline{\kappa}}\diam A)$.

Furthermore, when $x^*$ is some global minimiser of $H$, we have
\begin{equation}
	\label{lc2}
	\begin{split}
		(1 + \lambda_G \tau + \lambda_H \tau)d^2(x^*,J(x)) &\leq (1 - \lambda_F \tau)d^2(x^*,x) + (L\tau - \frac{\lambda_u}{2})d^2(x,J(x)) \\
		&+ \left(\frac{\lambda_l}{2}-1\right)d^2(x,w) + \left(1 - \frac{\lambda_u}{2}\right)d^2(w, J(x)),
	\end{split}
\end{equation}
where $\lambda_H$ is the convexity factor of $H$ ($\lambda_H = \lambda_G + \lambda_F$).
\end{lemma}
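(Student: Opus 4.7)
The plan is to trace through the proof of \cref{keythm} with the $\lambda$-convexity of $G$ and $F$ replacing plain convexity, so that the relevant subgradient inequalities now carry explicit quadratic correction terms. First I would strengthen \cref{key2new}: repeating its argument but invoking \eqref{subdiff} with factor $\lambda_F$, the inner-product estimate becomes $-2\iprod{\tau\nabla(-F)(x)}{\gamma^+(0)} \le 2\tau(F(y)-F(x)) - \tau\lambda_F d^2(x,y)$, which plugged into the triglower estimate yields
\[
    d^2(y, w) \leq (1-\lambda_F\tau)d^2(y,x) + 2\tau (F(y) - F(x)) + \frac{\lambda_l}{2} d^2(x, w),
\]
with $w=\desc_{\tau F}(x)$. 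Analogously, strengthening \cref{key1new} via \eqref{subdiff} for $G$ with factor $\lambda_G$ in the chain \eqref{eq:key1new:item1} produces
\[
    (1+\lambda_G\tau)d^2(y, \prox_{\tau G}(x)) \leq d^2(y,x) + 2\tau (G(y) - G(\prox_{\tau G}(x))) - \frac{\lambda_u}{2}d^2(x, \prox_{\tau G}(x)).
\]

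Next I would assemble the full estimate by repeating the chain \eqref{1}--\eqref{5} leading up to \cref{keythm}: the descent lemma \eqref{2} and the trigonometric cosine-rule estimate \eqref{3} only use smoothness of $F$ and the geometry of $X$, so they stay unchanged. Combining the new key2new (with $w$ in place of $\desc_{\tau F}(x)$) with \eqref{2}--\eqref{3} gives the modified \eqref{4} carrying the factor $(1-\lambda_F\tau)$ on $d^2(y,x)$. Setting $x=w$ in the new key1new yields the modified \eqref{5} with the factor $(1+\lambda_G\tau)$ on the left-hand side. Adding the two, the $d^2(y,w)$ terms cancel exactly as in the convex case, and \eqref{lc1} falls out directly.

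For \eqref{lc2}, I would specialise \eqref{lc1} to $y = x^*$, a minimiser of $H=G+F$. The observation is that $H$ is $\lambda_H$-convex with $\lambda_H = \lambda_G + \lambda_F$, and from the Fermat principle \cref{fermat} together with \eqref{subdiff} applied to $H$ at $x^*$, any point $z \in X$ satisfies $H(z) \ge H(x^*) + \tfrac{\lambda_H}{2}d^2(x^*, z)$. Applying this with $z = J(x)$ converts the term $2\tau(H(x^*)-H(J(x)))$ in \eqref{lc1} into $-\tau\lambda_H d^2(x^*, J(x))$; absorbing it into the left-hand side together with $(1+\lambda_G\tau)d^2(x^*,J(x))$ produces the announced coefficient $(1+\lambda_G\tau + \lambda_H\tau)$.

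I do not expect any genuine obstacle: the proof is essentially bookkeeping on top of \cref{keythm}, since both the descent and trigonometric ingredients are unaffected by the strong convexity, and the new correction terms appear in exactly the positions dictated by \eqref{subdiff}. The only mild care required is to make sure each instance of convexity used in the original proof is indeed applied along the correct geodesic so that the quadratic correction points to the right pair of base points (namely $(y,x)$ for $F$ and $(y,\prox_{\tau G}(x))$ for $G$), which is what makes the cancellation of $d^2(y,w)$ still go through cleanly.
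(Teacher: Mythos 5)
Your proposal is correct and follows essentially the same route as the paper: strengthen \cref{key1new,key2new} by invoking \eqref{subdiff} with the respective convexity factors, re-run the chain \eqref{1}--\eqref{5} of \cref{keythm} to get \eqref{lc1}, and then use $H(z)\ge H(x^*)+\tfrac{\lambda_H}{2}d^2(x^*,z)$ at $z=J(x)$ to obtain \eqref{lc2}. The only cosmetic difference is that you derive that last quadratic-growth bound via \cref{fermat} combined with \eqref{subdiff}, whereas the paper obtains it directly from the $\lambda_H$-convexity inequality and minimality by letting $t\to1$; the two arguments are equivalent.
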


\begin{proof}
    For a $\lambda_G$-convex $G$, we have by \cref{subdiff}
    \[
        D_{\prox_{\tau G}(x)}G(\gamma^+(0)) \leq G(y) - G(\prox_{\tau G}(x)) - (\lambda_G/2) d^2(y, \prox_{\tau G}(x)),
    \]
    so \cref{key1new} now improves to
	\[
		d^2(y, \prox_{\tau G}(x)) \leq d^2(y,x) + 2\tau (G(y) - G(\prox_{\tau G}(x))) - \frac{\lambda_u}{2}d^2(x, \prox_{\tau G}(x)) - \lambda_G \tau d^2(y, \prox_{\tau G}(x)).
	\]
    Analogously \cref{key2new} improves for $\lambda_F$-convex $F$ to
	\[
		d^2(y, \desc_{\tau F}(x)) \leq d^2(y,x) + 2\tau (F(y) - F(x)) + \frac{\lambda_l}{2} d^2(x, \desc_{\tau F}(x)) - \lambda_F \tau d^2(y, x)
	\]
    By following the proof of \cref{keythm} with these estimates now gives us \cref{lc1}. For \cref{lc2}, let $\lambda_H$ be the convexity factor of $H$ and $x^*$ its minimiser. For a geodesic $\gamma$ between $J(x)$ and $x^*$, we have $H(\gamma(t)) \leq (1-t)H(J(x)) + tH(x^*) - (\lambda_H /2)t(1-t)d^2(J(x),x^*)$ by $\lambda$-convexity and $H(x^*) \leq H(\gamma(t))$ by minimality, for all $t \in [0,1]$ and any $J(x) \in A$. Dividing by $1-t$ and letting $t \to 1$ gives us
	\[
    	\frac{\lambda_H}{2}d^2(J(x), x^*) \leq H(J(x)) - H(x^*).
	\]
    The claim now follows by choosing $y = x^*$ in \cref{lc1}.
\end{proof}

As strongly convex functions in complete geodesic spaces have unique minima \cite[Lemma 14.4]{petruninfoundations}, one can again show using \cref{keythmlambda}, that the forward-backward method converges to the unique minimiser of $H$ (one might even allow $\lambda_F < 0$ as long as $\lambda_H > 0$). As an example, we again take $G$ and $F$ to be Lipschitz continuous with $G$ being convex and $F$ strongly convex.

\begin{theorem}[Convergence to a minimiser, Lipschitz continuous $G$ and $F$, and strongly convex $F$]
	Along with the assumptions of \cref{lip}, let $F$ be strongly convex with factor $\lambda_F > 0$ and $\tau_k \lambda_F < 1$ for all $k \geq 0$. If $\inf_{x \in X} H(x)$ is attained at some point, then the sequence $\{ x^k \}_{k \in \N}$ created by the iterative process \cref{proxgraditer} converges to the unique minimiser of $H$ in $\widebar{B}(x^*,R)$ as $k \to \infty$.

	Furthermore, we have
	\begin{equation}
	\label{linear}
		d^2(x^{k+1}, x^*) \leq \prod_{i=0}^{k}\frac{1-\tau_i \lambda_F}{1+\tau_i \lambda_F} d^2(x^0,x^*) + \sum_{i=0}^{k} \bigg( \bigg( (\frac{\lambda_l}{2} - 1 )\frac{L_{\exp}^2L_F^2\tau_i^2}{1+\tau_i \lambda_F} + \left(1 - \frac{\lambda_u}{2}\right) \frac{4L^2_G\tau_i^2}{1+\tau_i \lambda_F}\bigg) \prod_{j=i+1}^{k}\frac{1-\tau_j \lambda_F}{1+\tau_j \lambda_F} \bigg),
	\end{equation}
	with the convention that $\prod_{j=m}^{n}$ is the empty product when $m>n$, i.e., equal to $1$.
\end{theorem}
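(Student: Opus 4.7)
The plan is to combine the refined key inequality \cref{lc2} with the same Lipschitz estimates on $d^2(\thisx, \thisw)$ and $d^2(\thisw, \nextx)$ used in the proof of \cref{lip}. Since we are only assuming $F$ to be strongly convex, we take $\lambda_G = 0$ and $\lambda_H = \lambda_F$. Applying \cref{lc2} at $y = x^*$, combined with the step size condition $L\tau_k \le \lambda_u/2$, kills the $d^2(\thisx, \nextx)$ term. The remaining error terms $d^2(\thisx, \thisw)$ and $d^2(\thisw, \nextx)$ are bounded as in \cref{lip} by $L_{\exp}^2 L_F^2 \tau_k^2$ and $4 L_G^2 \tau_k^2$, respectively, using Lipschitz continuity of the exponential map together with $F$, and the proximal definition together with Lipschitz continuity of $G$. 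This yields a one-step contraction
\[
    (1 + \tau_k \lambda_F) d^2(x^*, \nextx) \le (1 - \tau_k \lambda_F) d^2(x^*, \thisx) + e_k \tau_k^2,
\]
with $e_k = (\lambda_l/2-1) L_{\exp}^2 L_F^2 + (1 - \lambda_u/2) 4 L_G^2 \ge 0$.

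A priori locality of the iterates in $\widebar{B}(x^*, R)$ is established by exactly the same inductive argument as in \cref{lip}: assuming $\thisx \in \widebar{B}(x^*, r)$, \cref{locality} yields $\thisw, \nextx \in \widebar{B}(x^*, R)$ under the step size constraint, and the summability $\sum \tau_k^2 < \infty$ together with the recursion above gives a uniform ball, closing the induction. Dividing the recursion by $1 + \tau_k\lambda_F$ and unfolding from $k$ down to $0$ telescopes into \cref{linear} in a purely algebraic way; this is the routine part.

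The nontrivial part is to deduce convergence from \cref{linear}. The first summand tends to zero because $\log \tfrac{1-\tau_i\lambda_F}{1+\tau_i\lambda_F} \le -2\tau_i\lambda_F$ on $(0,1)$, so that the product is bounded by $\exp(-2\lambda_F \sum_{i=0}^k \tau_i)$ and vanishes thanks to $\sum \tau_i = \infty$. For the tail summand, the classical splitting argument applies: given $\epsilon > 0$, pick $N$ with $\sum_{i \ge N} \tau_i^2 < \epsilon$; the tail portion $i \ge N$ is then bounded by a constant times $\epsilon$ since the contraction products are at most $1$, while the finite head portion $i < N$ is annihilated by the product $\prod_{j=i+1}^k \tfrac{1-\tau_j\lambda_F}{1+\tau_j\lambda_F}$ which again tends to zero as $k \to \infty$. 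Hence $d^2(x^k, x^*) \to 0$.

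The expected main obstacle, which is mild, is making the dual control above rigorous while simultaneously carrying out the a priori induction that keeps iterates in $\widebar{B}(x^*, R)$: one must verify that the step size hypothesis \cref{stepsizebound} together with $\tau_k \lambda_F < 1$ keeps all the Lipschitz and locality bounds in force at every iterate, and that the $R$ chosen as in \cref{lip} is still compatible with the (possibly tighter) constraint on $\tau_k$ induced by strong convexity. Uniqueness of the limit is then automatic, since strong convexity of $F$ makes $H$ strongly convex with factor $\lambda_F > 0$, and by \cite[Lemma 14.4]{petruninfoundations} $H$ admits a unique minimiser on any geodesically convex set containing a minimiser; the limit obtained above must coincide with this unique point $x^*$.
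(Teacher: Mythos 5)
Your proof is essentially correct and follows the paper's intended approach of iterating \cref{lc2} (with $\lambda_G=0$, $\lambda_H=\lambda_F$, dropping the $(L\tau_k-\lambda_u/2)d^2(\thisx,\nextx)\le 0$ term) together with the Lipschitz bounds $d^2(\thisx,\thisw)\le L_{\exp}^2 L_F^2\tau_k^2$ and $d^2(\thisw,\nextx)\le 4L_G^2\tau_k^2$ from \cref{lip}; the a priori locality induction and the uniqueness-via-strong-convexity argument also match.

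One minor divergence: the paper's sketch says convergence ``follows the same \dots convergence arguments as in \cref{lip}'', i.e., the quasi-Fej{\'e}r/Siegmund--Robbins route, treating \cref{linear} as an after-the-fact iteration bound. You instead extract convergence directly from \cref{linear}, via the elementary estimate $\log\frac{1-t}{1+t}\le -2t$ on $(0,1)$ (so the leading product is bounded by $\exp(-2\lambda_F\sum\tau_i)\to 0$ under $\sum\tau_i=\infty$) together with the classical head/tail split for the discrete-convolution sum (using $\sum\tau_i^2<\infty$). Both routes are valid; yours is arguably cleaner here, since it bypasses the subsequence extraction and lower-semicontinuity argument of \cref{lip} and yields the explicit rate directly. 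Note only that, once you have $d^2(\thisx,x^*)\to 0$ with $x^*$ the minimiser used in \cref{lc2}, the final paragraph about identifying the limit with the unique minimiser is redundant: $x^*$ already is that point.
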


The proof follows the same locality and convergence arguments as in the proof of \cref{lip}, while uniqueness of the minimiser is due to the strong convexity of $H$. The expression \cref{linear} follows by iterating \cref{lc2} with the Lipschitz estimates from \cref{lip}.

\subsection{Inexact method}
\label{sec:inexact}

To close out the this section, we briefly outline how the method can be made inexact in both gradient and proximal steps. After the relevant changes, the proofs go through mutatis mutandis.

Instead of the exact optimality condition of \cref{lemma:proxopt} for the proximal step, we assume an $\epsilon$-approximate minimum so that the lower bound becomes $-\epsilon_G \leq 0$ rather than $0$ in \cref{eq:proxopt}. This trickles down through \cref{eq:key1new:item1} to \cref{eq:key1new,key} as $2\tau \epsilon_G$ on their right hand-sides.

To introduce inexactness to the gradient step, we replace $\nabla (-F)(x)$ with $g_x$ in \cref{gradexpresolv} and assume that $d_{\angle_x}(g_x, \nabla (-F)(x)) \leq \epsilon_F$, where $\epsilon_F \geq 0$. Now the application of
	\[
		\begin{split}
		-2\tau \langle g_x, \gamma^+(0) \rangle &= -2\tau \langle \nabla (-F)(x), \gamma^+(0) \rangle + 2\tau \langle \nabla (-F)(x) - g_x, \gamma^+(0) \rangle\\
		&\leq -2\tau \langle \nabla (-F)(x), \gamma^+(0) \rangle + 2\tau d_{\angle_x}(g_x, \nabla (-F)(x)) d(x, y)\\
		&\leq -2\tau \langle \nabla (-F)(x), \gamma^+(0) \rangle + 2\tau \epsilon_F d(x,y) \leq -2\tau \langle \nabla (-F)(x), \gamma^+(0) \rangle + 2\tau \epsilon_F \diam A
		\end{split}
	\]
in \cref{eq:key2new:item1} gives us \cref{eq:key2new,key} with $2\tau \epsilon_F \diam A$ on their right hand-sides.

Similarly, the monotonicity result \cref{prop:proxgrad:monotonicity} gets replaced with quasi-monotonicity as it has the additional term $\epsilon_{G_k} + \epsilon_{F_k} \diam A$ on the right side of \cref{eq:proxgrad:monotonicity}. For convergence of iterates and decrease of function values, we can assume that $\epsilon_{G_k} + \epsilon_{F_k} \diam A \leq \alpha d^2(x^k, x^{k+1})$, for some constant $\alpha \geq 0$ such that $\lambda_u - 2L\tau_k - 4\alpha \tau_k > 0$. The a priori locality \cref{localstep} is updated with the new step size bound $\tau_k \leq (R^2 - 2Rr)(2\epsilon_{G_k} + 2R(L_F + L_F L_{\exp} + L_{\exp}\epsilon_{F_k}))^{-1}$, where the necessary changes are made in \cref{eq:localstep:prox,localwbound} accordingly.

Finally, the main convergence results, \cref{lip,nolip}, follow from the aforementioned modified estimates by further assuming that the errors $\epsilon_{G_k}, \epsilon_{F_k}$ are summable, since then $\sum_{k} \tau_k \epsilon_{G_k}, \sum_{k} \tau_k \epsilon_{F_k} < \infty$ by H{\"o}lder. Note that all these inexact results reduce to the ones we have presented above, when the inexactness disappears.

\section{Examples}
\label{sec:examples}

We test our algorithm on a Lasso-like problem on two surfaces: the surface of a cube and that of a right circular cylinder capped from both ends with a flat disc of the same radius. Both of these surfaces have curvature bounded from below by $0$. However, the cube fails to be upper bounded at the vertices and the capped cylinder at the two bands where the discs meet the cylinder. For our algorithm, the cube does not present a problem as the vertices can be removed (geodesics do not go through them) or avoided by numerical perturbations; specifically, we can use inexact steps of \cref{sec:inexact} for the latter. This does not work for the capped cylinder as all of the interesting behaviour happens when an edge is crossed. Instead of this surface, we could have experimented with capsule (see paragraph after \cref{bilat}) since it satisfies all of the necessary assumptions. However, we do not find that example very interesting, so we decided to see how the method performs on the flat capped cylinder, despite the latter not satisfying all the assumptions.

For a regularisation parameter $\lambda>0$, a choice of “origin” $z \in X$, and data points $\{y_i\}_{i=1}^n \subset X$, the objective function on both surfaces will be
\begin{equation*}
	H(x) = F(x) + G(x) = \sum_{i=1}^n \frac{1}{2} d^2(x, y_i) + \lambda d(x, z),
\end{equation*}
To apply the forward-backward splitting method to minimise this function, we need to calculate the gradient of $F$ and the proximal operator of $G$.
The latter will be a type of soft thresholding operator.
Practically, to compute these maps, we need to form the exponential and logarithmic maps of the surfaces.
The software implementation of our experiments (in Rust) is available on Zenodo \cite{valkonenvonkoch2024nonriemannian-codes}.

\subsection{Auxiliary lemma}

For $y,z \in B(x,D_{\overline{\kappa}})$, where $D_{\overline{\kappa}}$ is the diameter of the model space (see \cref{sec:alexandrov}), we have by \cite[Chapter 13, sections D and E]{petruninfoundations}, the gradients
\begin{equation}
	\label{distancegrad}
	\nabla F(x) = -\sum_{i=1}^n \log_x y_i \quad \text{and} \quad \nabla G(x) = -\lambda \frac{\log_x z}{d(x,z)}, \enskip \text{if} \; x \neq z.
\end{equation}
The gradient step can now be written as
    \[
        \desc_{\tau F}(x) = \exp_x(\tau \nabla (-F)(x)) = \exp_x(\tau \sum_{i=1}^n \log_x y_i),
    \]
whereas the proximal step needs a little bit more work and has the following closed-form solution.

\begin{lemma}
    Let $G(x) = \lambda d(x,z)$, with $x,z \in A \subset X$, $\diam A < \pi/(2{\sqrt{\overline{\kappa}}})$, and $\lambda > 0$. Then
    \begin{equation*}
        \prox_{\tau G}(x) = y^* =
            \begin{cases}
                z, \quad & \text{if} \; d(x,z) \leq \lambda \tau;\\
                \exp_x(\frac{\lambda \tau}{d(x,z)}\log_x z), \quad & \text{if} \; d(x,z) > \lambda \tau.
            \end{cases}
    \end{equation*}
\end{lemma}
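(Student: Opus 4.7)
The plan is to apply the Fermat principle (\cref{fermat}) to $\phi(y) \defeq \lambda d(y,z) + (2\tau)^{-1} d^2(x,y)$. As the sum of the convex distance function $d_z$ and the strongly convex $d^2_x$ (\cref{dsquared}), $\phi$ is proper, lower semicontinuous, and strongly convex on the geodesically convex set $A$; hence it admits a unique minimiser $y^*$, characterised by $D_{y^*}\phi(v) \geq 0$ for every $v \in T_{y^*}X$. Since $D_y\phi((\gamma, s)) = s\, D_y\phi((\gamma, 1))$ follows directly from the definition of the directional derivative, it suffices to test the inequality along unit-speed geodesics $\gamma$ emanating from the candidate. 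The case $x = z$ is immediate (both summands vanish at $y = z$), so assume $x \neq z$.

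\textbf{Case 1 ($d(x,z) \leq \lambda\tau$).} I check optimality at $y = z$. For any unit-speed $\gamma$ with $\gamma(0) = z$, trivially $D_z d_z((\gamma,1)) = 1$, while the first variation formula (\cref{firstvar}) applied to $d_x$ at $z$ gives $D_z d^2_x((\gamma, 1)) = 2 d(x,z) D_z d_x((\gamma,1)) = -2 d(x,z)\cos\theta_{\min}$, where $\theta_{\min}$ is the minimum angle between $\gamma$ and geodesics from $z$ to $x$. Thus
\[
    D_z\phi((\gamma, 1)) = \lambda - \frac{d(x,z)}{\tau}\cos\theta_{\min} \geq \lambda - \frac{d(x,z)}{\tau} \geq 0
\]
by the case hypothesis.

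\textbf{Case 2 ($d(x,z) > \lambda\tau$).} The candidate $y^* = \exp_x\bigl((\lambda\tau/d(x,z))\log_x z\bigr)$ lies at distance $\lambda\tau$ from $x$ on the (unique, by the diameter bound on $A$) geodesic from $x$ to $z$. Since $0 < \lambda\tau < d(x,z)$, the point $y^*$ is strictly interior, so the sub-arcs from $y^*$ to $x$ and from $y^*$ to $z$ are the unique minimising geodesics from $y^*$ to $x$ and $z$; write $\alpha_x, \alpha_z \in \Sigma^{\prime}_{y^*}X$ for their initial directions. The comparison triangle of $\{x, y^*, z\}$ in $\mathbb{M}^2(\overline{\kappa})$ has collinear vertices since $d(x,y^*) + d(y^*,z) = d(x,z)$, so its comparison angle at $\tilde{y}^*$ equals $\pi$; passing to shrinking subtriangles in the definition of $\angle_{y^*}$ then forces $\angle_{y^*}(\alpha_x, \alpha_z) = \pi$. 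For any unit-speed $\gamma$ from $y^*$, set $\theta_x \defeq \angle_{y^*}(\gamma^+(0),\alpha_x)$ and $\theta_z \defeq \angle_{y^*}(\gamma^+(0),\alpha_z)$; the triangle inequality in $(\Sigma^{\prime}_{y^*}X, \angle_{y^*})$ yields $\theta_x + \theta_z \geq \pi$, and hence $\cos\theta_z \leq -\cos\theta_x$. Combining the first variation formula with $d(x,y^*) = \lambda\tau$,
\[
    D_{y^*}\phi((\gamma,1)) = -\lambda\cos\theta_z - \lambda\cos\theta_x = -\lambda(\cos\theta_x + \cos\theta_z) \geq 0.
\]
Uniqueness of the minimiser of $\phi$ then identifies $y^*$ with $\prox_{\tau G}(x)$.

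The main obstacle is the antipodal identity $\angle_{y^*}(\alpha_x, \alpha_z) = \pi$ at the interior point of the minimising geodesic; it requires uniqueness of geodesics in $A$, the collinearity of the comparison vertices in $\mathbb{M}^2(\overline{\kappa})$, and the comparison-triangle definition of the angle. Once this is in hand, the triangle inequality on the space of directions together with the first variation formula closes both cases at once.
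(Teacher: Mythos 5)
Your proof is correct, and it takes a genuinely different route from the paper's. The paper proves the first case ($d(x,z) \le \lambda\tau$) by a purely metric argument: squaring the triangle inequality $\abs{d(x,z)-d(y,z)} \le d(x,y)$, using the hypothesis, and adding $2\lambda\tau d(z,z)=0$ to read off $\phi(z) \le \phi(y)$ for \emph{all} $y$, with no differential calculus involved. For the second case the paper sets the gradient $\nabla\bigl(G + (2\tau)^{-1}d_x^2\bigr)(y^*)$ to zero, reads off the relation $\log_{y^*}x = -\lambda\tau\, d(y^*,z)^{-1}\log_{y^*}z$, and argues somewhat informally that this forces $y^*$ onto the geodesic $x\#_t z$. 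Your argument instead treats both cases through a single variational template: the Fermat principle, the first variation formula, and — crucially for Case 2 — the antipodal identity $\angle_{y^*}(\alpha_x,\alpha_z)=\pi$ at an interior point of a minimizing geodesic, which you establish by noting the comparison triangle of $\{x,y^*,z\}$ is degenerate and that shrinking sub-triangles remain degenerate, so the angle is the limit of comparison angles all equal to $\pi$. The triangle inequality in $(\Sigma_{y^*}X,\angle_{y^*})$ then gives $\theta_x+\theta_z\ge\pi$, hence $\cos\theta_x+\cos\theta_z\le 0$, closing the argument. Your route is more systematic and in particular sidesteps the paper's reliance on treating the relation $\log_{y^*}x = -\lambda\tau\, d(y^*,z)^{-1}\log_{y^*}z$ as if $T_{y^*}X$ had a vector-space negation; that step in the paper is justified only because bilateral bounds make the tangent cone Euclidean (\cref{bilat}), which the paper does not spell out at that point. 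The price you pay is more infrastructure: the paper's Case 1 is a three-line elementary inequality, whereas yours invokes the first variation formula and additivity and positive homogeneity of directional derivatives of convex functions. Both are sound; the uniqueness of the minimiser, the strong convexity of $d_x^2$ on $A$ under the diameter bound, and the applicability of the first variation formula (since $y^*\ne x$, $y^*\ne z$ and $d<\pi/\sqrt{\overline\kappa}$ in $A$) are all correctly accounted for in your write-up.
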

For consistency, we could also write $z = \exp_x(\log_x z)$.

\begin{proof}
    Consider first $d(x,z) \leq \lambda \tau$. Let $y \in X$. We have by the triangle inequality
    \[
        (d(x,z) - d(y,z))^2 \leq d^2(x,y) \leq d^2(x,y) + d^2(y,z)
    \]
    so that
    \[
        d^2(x,z) \leq 2d(x,z)d(y,z) + d^2(x,y).
    \]
	Using the assumption $d(x,z) \leq \lambda \tau$, and adding $2\lambda \tau d(z,z)=0$ on both sides leads to
    \[
        2\lambda \tau d(z,z) + d^2(x,z) \leq 2\lambda \tau d(y,z) + d^2(x,y)
        \quad\text{for all}\quad
        y \in X.
    \]
    This is to say, $\text{prox}_{\tau G}(x) = z$.

    Now, suppose $d(x,z) > \lambda \tau$. By \cref{distancegrad}, for any $y \ne z$, we have
    \[
        \nabla (G(y) + (2\tau)^{-1}d^2(x,y))(y) = -\lambda (d(y,z))^{-1} \log_y z - \tau^{-1} \log_y x,
    \]
    which for $y=y^* \ne z$ to be a minimiser, must equal to $0$, that is,
    \begin{equation}\label{softthresh}
        \log_{y^*} x = -\lambda \tau (d(y^*,z))^{-1} \log_{y^*} z.
    \end{equation}
    By the assumption $x,z \in A$, we can write the geodesic $\gamma(t) = x \#_t z$, for $t \in [0,1]$, as $\gamma(t) = \exp_x(t\log_x z)$. Since the lengths of tangent vectors are always positive and the geodesic $\gamma$ is unique, the equivalence of the left- and right-hand sides of \cref{softthresh} implies that $y^*$ lies on the geodesic $\gamma$, i.e. $y^* = \exp_x(t^*\log_x z)$, for some $0<t^*<1$.
    More explicitly, we have by \cref{softthresh}
    \[
        d(y^*, x) = d(\exp_x(t^*\log_x z), x) = t^*|\log_x z| \implies t^*|\log_x z| = \lambda \tau \implies t^* = \lambda \tau / |\log_x z|,
    \]
    which, by assumption $d(x,z) > \lambda \tau > 0$, satisfies $0<t^*<1$.
    Although the formula $y^* = \exp_x(t^*\log_x z)$ would still hold with $t^*=0$ in the case $y^*=z$, we have just shown that this case never happens.
\end{proof}

\subsection{Cube}
\label{sec:examples:cube}

\begin{figure}[t]
	\centering
	\tdplotsetmaincoords{50}{25}
	\begin{tikzpicture}
			[tdplot_main_coords,
				cube/.style={very thick,black},
				grid/.style={very thin,gray},
				axis/.style={->,blue,thick}]

		\foreach \x in {-1.5,-1,...,1.5}
			\foreach \y in {-1.5,-1,...,1.5}
			{
				\draw[grid] (\x,-1.5) -- (\x,1.5);
				\draw[grid] (-1.5,\y) -- (1.5,\y);
			}

		\draw[axis] (-1,-1,0) -- (2,-1,0) node[anchor=west]{$x$};
		\draw[axis] (-1,-1,0) -- (-1,2,0) node[anchor=west]{$y$};
		\draw[axis] (-1,-1,0) -- (-1,-1,3) node[anchor=west]{$z$};

		\draw[cube] (-1,-1,0) -- (-1,1,0) -- (1,1,0) -- (1,-1,0) -- cycle;
		\draw[cube] (-1,-1,2) -- (-1,1,2) -- (1,1,2) -- (1,-1,2) -- cycle;

		\draw[cube] (-1,-1,0) -- (-1,-1,2);
		\draw[cube] (-1,1,0) -- (-1,1,2);
		\draw[cube] (1,1,0) -- (1,1,2);
		\draw[cube] (1,-1,0) -- (1,-1,2);

		\filldraw[black] (-1,-1) circle (2pt) node[anchor=east]{$O$};
		\filldraw[black] (1,1) circle (2pt) node[anchor=west]{$(1,1,0)$};
		\filldraw[black] (1,1,2) circle (2pt) node[anchor=west]{$(1,1,1)$};
		\filldraw[black] (-1,1,2) circle (2pt) node[anchor=south]{$(0,1,1)$};
		\filldraw[black] (-1,-1,2) circle (2pt) node[anchor=east]{$(0,0,1)$};
	\end{tikzpicture} \hspace{1em}
	\tdplotsetmaincoords{60}{20}
	\begin{tikzpicture}
			[tdplot_main_coords,
				cube/.style={very thick,black},
				axis/.style={->,blue,thick}]

		\draw[axis] (-1,-1,0) -- (-1,-1,1) node[anchor=east]{$y_4$};
		\draw[axis] (-1,-1,0) -- (0.1,-1,0) node[anchor=north]{$x_4$};
		\draw[axis] (-1,1,0) -- (-1,0.1,0) node[anchor=west]{$x_3$};
		\draw[axis] (-1,1,0) -- (-1,1,1) node[anchor=west]{$y_3$};
			\draw[axis] (1,1,0) -- (1,1,1) node[anchor=west]{$y_5$};
		\draw[axis] (1,1,0) -- (0,1,0) node[anchor=south]{$x_5$};
			\draw[axis] (1,-1,0) -- (1,-1,1) node[anchor=west]{$y_2$};
		\draw[axis] (1,-1,0) -- (1,0,0) node[anchor=west]{$x_2$};

		\draw[cube] (-1,-1,0) -- (-1,1,0) -- (1,1,0) -- (1,-1,0) -- cycle;
		\draw[cube] (-1,-1,2) -- (-1,1,2) -- (1,1,2) -- (1,-1,2) -- cycle;

		\draw[cube] (-1,-1,0) -- (-1,-1,2);
		\draw[cube] (-1,1,0) -- (-1,1,2);
		\draw[cube] (1,1,0) -- (1,1,2);
		\draw[cube] (1,-1,0) -- (1,-1,2);

		\draw(1,0,1) node{$2$};
		\draw(0,-1,1) node{$4$};
		\draw(0,0,2) node{$6$};
		\draw[gray!100](0,0,0) node{$1$};
		\draw[gray!100](-1,0,1) node{$3$};
		\draw[gray!100](0,1,1) node{$5$};
	\end{tikzpicture} \hspace{2em}
	\tdplotsetmaincoords{60}{20}
	\begin{tikzpicture}
			[tdplot_main_coords,
				cube/.style={very thick,black},
				axis/.style={->,blue,thick}]

		\draw[axis] (-1,-1,0) -- (0.1,-1,0) node[anchor=north]{$x_1$};
		\draw[axis] (-1,-1,0) -- (-1,0.1,0) node[anchor=west]{$y_1$};
		\draw[axis] (-1,-1,2) -- (0.1,-1,2) node[anchor=north]{$x_6$};
		\draw[axis] (-1,-1,2) -- (-1,0.1,2) node[anchor=east]{$y_6$};

		\draw[cube] (-1,-1,0) -- (-1,1,0) -- (1,1,0) -- (1,-1,0) -- cycle;
		\draw[cube] (-1,-1,2) -- (-1,1,2) -- (1,1,2) -- (1,-1,2) -- cycle;

		\draw[cube] (-1,-1,0) -- (-1,-1,2);
		\draw[cube] (-1,1,0) -- (-1,1,2);
		\draw[cube] (1,1,0) -- (1,1,2);
		\draw[cube] (1,-1,0) -- (1,-1,2);

		\draw(1,0,1) node{$2$};
		\draw(0,-1,1) node{$4$};
		\draw(0,0,2) node{$6$};
		\draw[gray!100](0,0,0) node{$1$};
		\draw[gray!100](-1,0,1) node{$3$};
		\draw[gray!100](0,1,1) node{$5$};
	\end{tikzpicture}
	\caption{A unit cube with labeled faces and their local coordinate systems.}
	\label{fig:cube}
\end{figure}
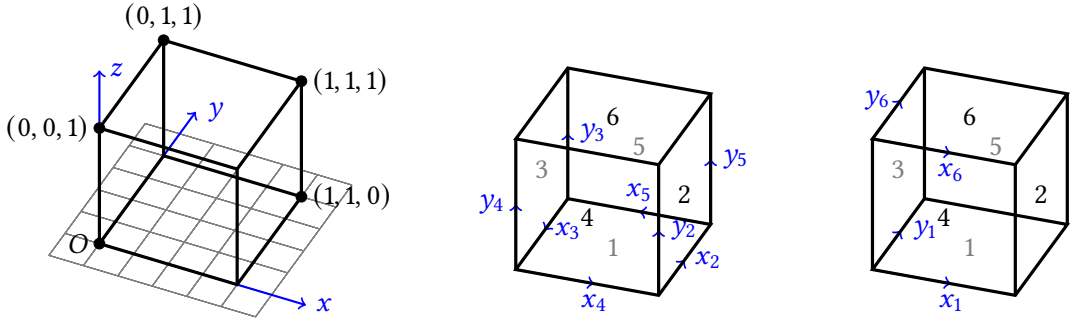

Let our surface be a unit cube with labeled faces and the origin as one of the vertices. In addition, let each face $F$ have their own fixed oriented local two-dimensional coordinate system denoted by $(x_F,y_F)_F \in [0,1]^2$, where $F \in \{1,2,3,4,5,6\}$, as in \cref{fig:cube}. We shall use the triple $(x, y, z)$ for the standard coordinates in three-dimensional Euclidean space.
As the surface of a convex body, the curvature of the cube is bounded from below by $\underline{\kappa}=0$ \cite[Theorem 10.2.6]{burago2001course}.
By the Reshetnyak gluing theorem \cite[Theorem 9.1.21]{burago2001course}, \emph{away from the corners}, but including the remainder of the edges, the curvature is  also bounded from above by $\overline{\kappa}=0$.
Since geodesics are not unique at the corners, the curvature cannot be bounded from above near the corners \cite[Theorem 9.1.17]{burago2001course}.
Practically, we will never reach the corners\footnote{\cref{lemma:proxopt}\cref{key1new}  \cref{keythm} }, so our convergence theory is applicable.
Alternatively, we could smoothen the corners (but, importantly, not the edges) to have  the entire manifold bounded from both above and below.

Geodesics on the 3D cube form straight lines that do not go through vertices \cite[Lemma 4.1]{shahironshortest}, when the cube is unfolded into a two-dimensional surface. To determine the geodesics, we need transformations between local coordinate systems, which can then be composed together to make longer chains between faces. A coordinate transformation from one face to another is made by rotating (with respect to the common edge) the target face onto the same plane as the starting face (keeping this face fixed) such that there is no overlap. \Cref{table:cube} contains all coordinate transformations between adjacent faces as well as their standard three-dimensional coordinates in each face. Distances between points can be calculated with the help of \cref{table:cube} by transforming the coordinate of the endpoint to that of the starting point's system and using the Pythagorean theorem for all of the possible paths between these points. The possible transformations also facilitates the formation of the exponential and logarithm maps. An illustration of geodesics between two points on faces $1$ and $2$ in different unfoldings can be seen in \cref{fig:cubeunfold}. We illustrate the numerical behaviour of our forward-backward algorithm on the cube in \cref{fig:numerical:cube}.

\begin{table}[t]
    \caption{Coordinate transformations between the faces, with $x,y \in [0,1]$, as well as their   standard three-dimensional coordinates. Identity transformations are denoted in grey. Note that the empty places correspond to non-adjacent faces.}
	\label{table:cube}
	\centering
	\[
		\begin{blockarray}{ccccccc}
        	\R^3 & (x,y,0) & (1,x,y) & (0,1-x,y) & (x,0,y) & (1-x,1,y) & (x,y,1) \\
        	\cmidrule{1-7}
        	\text{Face \#} & 1 & 2 & 3 & 4 & 5 & 6 \\
		\begin{block}{c(cccccc)}
        	1 & \makecell{\cellcolor{lightgray}(x,y)_1} & (y+1,x)_1 & (-y,1-x)_1 & (x,-y)_1 & (1-x,y+1)_1 & \\
        	2 & (y,x-1)_2 & \makecell{\cellcolor{lightgray}(x,y)_2} &  & (x-1,y)_2  & (x+1,y)_2 & (y,2-x)_2 \\
        	3 & (1-y,-x)_3 &  & \makecell{\cellcolor{lightgray}(x,y)_3} & (x+1,y)_3 & (x-1,y)_3 & (1-y,x+1)_3 \\
        	4 & (x,-y)_4 & (x+1,y)_4 & (x-1,y)_4 & \makecell{\cellcolor{lightgray}(x,y)_4} & & (x,y+1)_4\\
        	5 & (1-x,y-1)_5 & (x-1,y)_5 & (x+1,y)_5 & & \makecell{\cellcolor{lightgray}(x,y)_5} & (1-x,2-y)_5 \\
        	6 & & (2-y,x)_6 & (y-1,1-x)_6 & (x,y-1)_6 & (1-x,2-y)_6 & \makecell{\cellcolor{lightgray}(x,y)_6} \\
		\end{block}
		\end{blockarray}
	\]
\end{table}

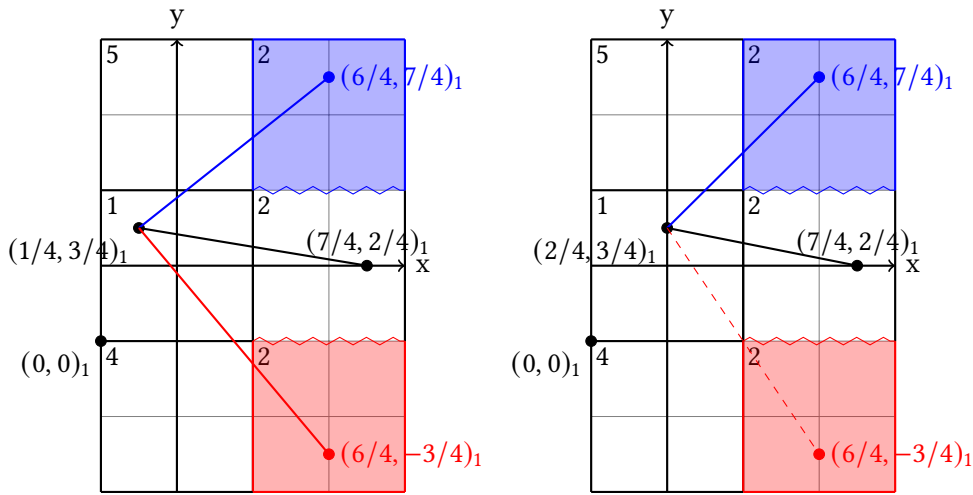
\begin{figure}
	\centering
	\begin{tikzpicture}
    	\draw[step=1cm,gray,very thin] (-1,-3) grid (3,3);

		\draw[thick,-] (-1,-1) -- (1,-1);
		\draw[thick,-] (-1,1) -- (1,1);
		\draw[thick,-] (1,3) -- (1,-3);
		\draw[thick,-] (-1,-3) -- (-1,3);
		\draw[thick,-,red] (3,-3) -- (3,-1);
		\draw[thick,-] (3,-1) -- (3,1);
		\draw[thick,-,blue] (3,1) -- (3,3);
		\draw[thick,-,red] (1,-3) -- (1,-1);
		\draw[thick,-] (1,-1) -- (1,1);
		\draw[thick,-,blue] (1,1) -- (1,3);
		\draw[thick,-] (-1,3) -- (1,3);
		\draw[thick,-,blue] (1,3) -- (3,3);
		\draw[thick,-] (-1,-3) -- (1,-3);
		\draw[thick,-,red] (1,-3) -- (3,-3);

		\draw(-0.85,0.8) node{$1$};
		\draw(-0.85,2.8) node{$5$};
		\draw(-0.85,-1.2) node{$4$};
		\draw(1.15,0.8) node{$2$};
		\draw(1.15,2.8) node{$2$};
		\draw(1.15,-1.2) node{$2$};

		\draw[decorate,decoration={zigzag, amplitude = 0.5mm},blue] (1,1) -- (3,1);
		\draw[decorate,decoration={zigzag, amplitude = 0.5mm},red] (1,-1) -- (3,-1);

		\draw[thick,->] (-1,0) -- (3,0) node[anchor=west] {x};
		\draw[thick,->] (0,-3) -- (0,3) node[anchor=south] {y};

		\filldraw[black] (-1,-1) circle (2pt) node[anchor=north east]{$(0,0)_1$};
		\filldraw[black] (-0.5,0.5) circle (2pt) node[anchor=north east]{$(1/4,3/4)_1$};
		\filldraw[blue] (2,2.5) circle (2pt) node[anchor=west]{$(6/4,7/4)_1$};
		\filldraw[black] (2.5,0) circle (2pt) node[anchor=south]{$(7/4,2/4)_1$};
		\filldraw[red] (2,-2.5) circle (2pt) node[anchor=west]{$(6/4,-3/4)_1$};

		\draw[thick,-,blue] (-0.5,0.5) -- (2,2.5);
		\draw[thick,-] (-0.5,0.5) -- (2.5,0);
		\draw[thick,-,red] (-0.5,0.5) -- (2,-2.5);

		\node[fill=blue, fill opacity=0.3, scale=7.8] at (2,2) {};
		\node[fill=red, fill opacity=0.3, scale=7.8] at (2,-2) {};
	\end{tikzpicture}
	\begin{tikzpicture}
    	\draw[step=1cm,gray,very thin] (-1,-3) grid (3,3);

		\draw[thick,-] (-1,-1) -- (1,-1);
		\draw[thick,-] (-1,1) -- (1,1);
		\draw[thick,-] (1,3) -- (1,-3);
		\draw[thick,-] (-1,-3) -- (-1,3);
		\draw[thick,-,red] (3,-3) -- (3,-1);
		\draw[thick,-] (3,-1) -- (3,1);
		\draw[thick,-,blue] (3,1) -- (3,3);
		\draw[thick,-,red] (1,-3) -- (1,-1);
		\draw[thick,-] (1,-1) -- (1,1);
		\draw[thick,-,blue] (1,1) -- (1,3);
		\draw[thick,-] (-1,3) -- (1,3);
		\draw[thick,-,blue] (1,3) -- (3,3);
		\draw[thick,-] (-1,-3) -- (1,-3);
		\draw[thick,-,red] (1,-3) -- (3,-3);

		\draw(-0.85,0.8) node{$1$};
		\draw(-0.85,2.8) node{$5$};
		\draw(-0.85,-1.2) node{$4$};
		\draw(1.15,0.8) node{$2$};
		\draw(1.15,2.8) node{$2$};
		\draw(1.15,-1.2) node{$2$};

		\draw[decorate,decoration={zigzag, amplitude = 0.5mm},blue] (1,1) -- (3,1);
		\draw[decorate,decoration={zigzag, amplitude = 0.5mm},red] (1,-1) -- (3,-1);

		\draw[thick,->] (-1,0) -- (3,0) node[anchor=west] {x};
		\draw[thick,->] (0,-3) -- (0,3) node[anchor=south] {y};

		\filldraw[black] (-1,-1) circle (2pt) node[anchor=north east]{$(0,0)_1$};
		\filldraw[black] (0,0.5) circle (2pt) node[anchor=north east]{$(2/4,3/4)_1$};
		\filldraw[blue] (2,2.5) circle (2pt) node[anchor=west]{$(6/4,7/4)_1$};
		\filldraw[black] (2.5,0) circle (2pt) node[anchor=south]{$(7/4,2/4)_1$};
		\filldraw[red] (2,-2.5) circle (2pt) node[anchor=west]{$(6/4,-3/4)_1$};

		\draw[thick,-,blue] (0,0.5) -- (2,2.5);
		\draw[thick,-] (0,0.5) -- (2.5,0);
		\draw[dashed,-,red] (0,0.5) -- (2,-2.5);

		\node[fill=blue, fill opacity=0.3, scale=7.8] at (2,2) {};
		\node[fill=red, fill opacity=0.3, scale=7.8] at (2,-2) {};
	\end{tikzpicture}
	\caption{Two copies of an unfolding of the cube with the labels of the faces marked on their upper left corners. Included are three colour-coded paths from face $1$ to face $2$, with different starting points but same endpoints, in the coordinate system of face $1$. The paths on the left are all geodesics, whereas the dashed red path on the right is not, as it goes through the vertex $(1,0)_1$. Note that the two zigzag edges between faces $2$ are not real but merely artefacts of the drawing.}
	\label{fig:cubeunfold}
\end{figure}

\subsection{Cylinder}

For our second example, we consider the capped circular cylinder
\[
    C_{r,h} \defeq
    \{
        (t\cos \phi, t\sin \phi, z)
        \mid
        \phi \in [0, 2\pi),\, 0 \le t \le r,\, -h \le z \le h,\,
        (t-r)(h-\abs{z})=0
    \} \subset \R^3
\]
of height $2h$ and radius $r$.
Again, as the surface of a convex body, the curvature is bounded from below by $\underline{\kappa}=0$ \cite[Theorem 10.2.6]{burago2001course}.
\emph{Away from the edges}, i.e., independently on the cap and the side, the curvature is also bounded from above  $\overline{\kappa}=0$. This can be easily seen from the definition.
Since geodesics are not unique near the edge between the cylindrical surface and the cap, the curvature cannot be bounded from above at the edge \cite[Theorem 9.1.17]{burago2001course}.
Thus, our convergence theory only applies if, after a finite number of steps, the geodesics joining the iterates no longer cross the edges of the caps, all the iterates staying on either the side or one of the caps.
The computationally more difficult “total surrogate” approach of \cref{rem:total-surrogate} would be applicable without such a restriction.
Moreover, the same considerations as for the cube in \cref{sec:examples:cube}, apply to a polyhedral approximation of the cylinder.

We will equivalently work with the cylindrical coordinates $(z, \phi, t)$.
Geodesics between two points $p_1=(z_1,\phi_1, r)$ and $p_2=(z_2,\phi_2, r)$ on the side are either helices of the form
\[
	\gamma(\phi) = \bigg (r \cos \phi, r \sin \phi, \frac{z_1-z_2}{\phi_1-\phi_2}\phi + \frac{\phi_1 z_2 - \phi_2 z_1}{\phi_1 - \phi_2} \bigg )
\]
or, in the degenerate cases, straight lines or arcs of circles (this follows from Clairaut's Theorem \cite[Proposition 9.3.2]{pressley2010elementary}), whereas geodesics on the disk-shaped caps are simply straight lines. Geodesics on the whole surface are thus concatenations of these types such that the geodesic forms a straight line in an unfolding of the cylinder \cite[Chapter III, Theorem 3]{kutateladze2005d}; the side forms a rectangle and the disk-shaped caps are tangent to this on opposite sides (see \cref{fig:cyltangent}). The two difficult types of geodesics on the whole surface are: a path going over the whole side or a whole cap. A simple example shows that the second case is indeed possible and that instead of a helix on the cylindrical surface, we sometimes take a shortcut through a cap.
\begin{enumerate}
	\item A geodesic starting from $p_1$ at the top cap and ending at $p_2$ at the bottom cap. Points $x_1$ and $x_2$, where the geodesic crosses the top and bottom caps, respectively, need to be chosen such that the curve going through the points $p_1, x_1, x_2$ and $p_2$ is straight in the unfolded picture.
	\item A geodesic between points $p_1$ and $p_2$, both on side of the cylinder. If the geodesic crosses the boundary of a cap at $x_1$ and $x_2$, then these points need to be chosen such that the curve going through the points $p_1, x_1, x_2$, and $p_2$ is straight in the unfolded picture.
\end{enumerate}
Both of these problems are solved from the optimisation problem
\[
    d(p_1,p_2) = \min_{(x_1,x_2)} \big \{ d(p_1,x_1) + d(x_1,x_2) + d(x_2,p_2) \big \}.
\]
With a bit of trigonometry\footnote{A detailed derivation is included with our implementation \cite{valkonenvonkoch2024nonriemannian-codes}}, this can be solved with several applications of the law of cosines and minimising with respect to the angles where the geodesic crosses the edges.
The resulting solutions are used to form the exponential and logarithm maps on the closed cylinder.
However, these maps can only be solved numerically, as the solutions to the geodesic optimisation problems form transcendental equations.
We illustrate the numerical behaviour of our forward-backward algorithm on the cylinder in \cref{fig:numerical:cylinder}.
These simple numerical experiments validate our theory.

\begin{figure}[t]%
    \input{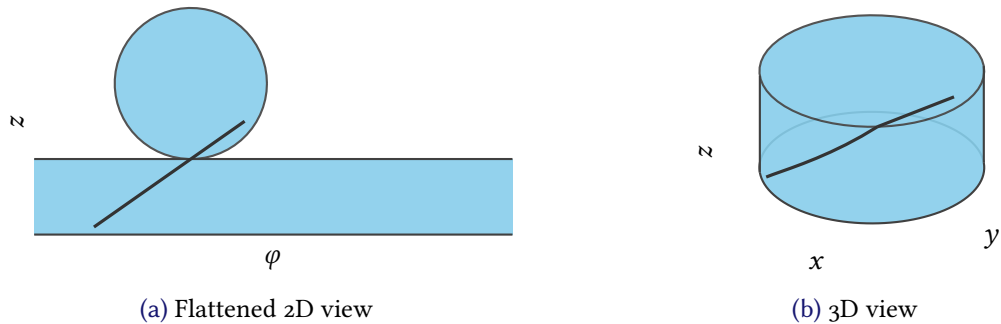}%
    \centering%
    \begin{subfigure}{0.5\linewidth}%
        \centering%
        \tikzsetnextfilename{cyltangent2d}%
        \begin{tikzpicture}
    \begin{axis}[
        set layers,
        axis equal,
        width=\linewidth,
        height=0.6\linewidth,
        enlargelimits=false,
        xlabel = {$\phi$},
        ylabel = {$z$},
        ticks = none,
        axis line style = {draw=none},
        geodesic/.style = {very thick, gray!40!black,forget plot},
    ]

        \addplot[edge, name path=A] coordinates { ({-pi}, {-0.5}) ({pi}, {-0.5}) };
        \addplot[edge, name path=B] coordinates { ({-pi}, { 0.5}) ({pi}, { 0.5}) };
        \addplot3[fill=SkyBlue, very nearly opaque] fill between [of=A and B];

        \addplot[edge, fill=SkyBlue, very nearly opaque, domain={0:2*pi}, samples=60]
            ({\edgephi+cos(deg(x))}, {1.5+sin(deg(x))});

        \addplot[geodesic, domain={0:\edgehit}, samples=2]
            ({\pphi + x*\tphi},
             {\pz + x*\tz});

        \addplot[geodesic, domain={0:\desthit}, samples=2]
            ({\edgephi+x*\tphi},
             {\edgez+x*\tz});

    \end{axis}
\end{tikzpicture}%
        \caption{Flattened 2D view}%
        \label{fig:cyltangent:2d}
    \end{subfigure}%
    \begin{subfigure}{0.5\linewidth}%
        \centering%
        \tikzsetnextfilename{cyltangent3d}%
        \begin{tikzpicture}
    \pgfmathsetmacro{\a}{-pi*5/6}
    \pgfmathsetmacro{\b}{pi/6}

    \begin{axis}[
        set layers,
        axis equal,
        width=0.7\linewidth,
        height=0.7\linewidth,
        enlargelimits=false,
        xlabel = {$x$},
        ylabel = {$y$},
        zlabel = {$z$},
        ticks = none,
        axis line style = {draw=none},
        geodesic/.style = {very thick, gray!40!black,forget plot},
    ]
        \addplot3[edge, domain=\b:(2*pi+\a), samples = 90, samples y = 0, on layer={pre main}] ({cos(deg(x))}, {sin(deg(x))}, {-0.5});

        \addplot3[edge, fill=SkyBlue, very nearly opaque, domain={0:2*pi}, samples = 60, samples y = 0] ({cos(deg(x))}, {sin(deg(x))}, {0.5});
        \addplot3[name path=B,edge, domain={\a:\b}, samples = 60, samples y = 0] ({cos(deg(x))}, {sin(deg(x))}, {-0.5});
        \addplot3[draw=none,name path=A,edge, domain={\a:\b}, samples = 60, samples y = 0] ({cos(deg(x))}, {sin(deg(x))}, {0.5});
        \addplot3[fill=SkyBlue, very nearly opaque] fill between [of=A and B];
        \addplot3[edge] coordinates { (cos(deg(\a)), sin(deg(\a)), -0.5) (cos(deg(\a)), sin(deg(\a)), 0.5) };
        \addplot3[edge] coordinates { (cos(deg(\b)), sin(deg(\b)), -0.5) (cos(deg(\b)), sin(deg(\b)), 0.5) };

        \addplot3[geodesic, domain={0:\edgehit}, samples= 10, samples y = 0]
            ({cos(deg(\pphi + x*\tphi))},
             {sin(deg(\pphi + x*\tphi))},
             {\pz + x*\tz});

        \addplot3[geodesic, domain={0:\desthit}, samples= 10, samples y = 0]
            ({\edgex+x*\trx},
             {\edgey+x*\try},
             {0.5});

    \end{axis}
\end{tikzpicture}%
        \caption{3D view}%
        \label{fig:cyltangent:3d}
    \end{subfigure}%
    \caption{%
        Cylinder tangents and geodesics.
        In the flattened 2D view of (\subref{fig:cyltangent:2d}), geodesics are straight lines; for any two points on the geodesic, the tangent vector given by the logarithmic map is parallel to this line.
        In the 3D view of (\subref{fig:cyltangent:3d}), the same geodesic is a helical path on the side, and again a straight line on the top.
    }
    \label{fig:cyltangent}
\end{figure}

\begin{figure}[t]%
    \centering%
    \tikzexternalenable%
    \begin{subfigure}{0.5\linewidth}%
        \centering%
        \tikzsetnextfilename{cube}%
        \def\datapath{res/cube}%
        \begin{tikzpicture}
    \pgfplotsset{
        onlyfront/.code = \pgfplotsset{x filter/.expression={%
            or(\thisrow{face} == 2, or(\thisrow{face} == 4, \thisrow{face} == 6)) ? x : nan%
        },},
        onlyback/.code = \pgfplotsset{x filter/.expression={%
            or(\thisrow{face} == 1, or(\thisrow{face} == 3, \thisrow{face} == 5)) ? x : nan%
        },},
    }
    
    \begin{axis}[illustr3d]
        \addplot3[edge] coordinates {(0, 0, 0) (0, 1, 0) (0, 1, 1) };
        \addplot3[edge] coordinates {(0, 1, 0) (1, 1, 0) };

        \addplot3[backdata] table[x=x,y=y,z=z] {\datapath/data.csv};

        \addplot3[backiter1] table[x=x,y=y,z=z] {\datapath/x1_log.csv};
        \addplot3[backiter2] table[x=x,y=y,z=z] {\datapath/x2_log.csv};
        \addplot3[backiter3] table[x=x,y=y,z=z] {\datapath/x3_log.csv};

        \addplot3[
            surf,
            mesh/ordering=x varies, 
            mesh/cols=32, 
            surfstyle,
        ] table [
            x = u,
            z = v,
            y expr = 0.0,
            point meta = \thisrow{value},
         ] {\datapath/F4.csv};
        \addplot3[
            surf,
            mesh/ordering=x varies, 
            mesh/cols=32, 
            surfstyle,
        ] table [
            x expr = 1.0,
            y = u,
            z = v,
            point meta = \thisrow{value},
         ] {\datapath/F2.csv};
        \addplot3[
            surf,
            mesh/ordering=x varies, 
            mesh/cols=32, 
            surfstyle,
        ] table [
            x = u,
            y = v,
            z expr = 1.0,
            point meta = \thisrow{value},
        ] {\datapath/F6.csv};
    
        \addplot3[edge] coordinates {(0, 0, 0) (1, 0, 0) (1, 0, 1) (0, 0, 1) (0, 0, 0)};
        \addplot3[edge] coordinates {(0, 0, 1) (0, 1, 1) (1, 1, 1) (1, 0, 1)};
        \addplot3[edge] coordinates {(1, 0, 0) (1, 1, 0) (1, 1, 1)};

        \addplot3[data,onlyfront] table[x=x,y=y,z=z] {\datapath/data.csv};
        \addlegendentry{Data}

        \addplot3[iter1,onlyfront] table[x=x,y=y,z=z] {\datapath/x1_log.csv};
        \addlegendentry{Iterates 1}
        \addplot3[iter2,onlyfront] table[x=x,y=y,z=z] {\datapath/x2_log.csv};
        \addlegendentry{Iterates 2}
        \addplot3[iter3,onlyfront] table[x=x,y=y,z=z] {\datapath/x3_log.csv};
        \addlegendentry{Iterates 3}

        \addplot3[origin,onlyfront] table[x=x,y=y,z=z] {\datapath/origin.csv};
        \addlegendentry{Origin}
    \end{axis}
\end{tikzpicture}%
        \caption{Cube}%
        \label{fig:numerical:cube}
    \end{subfigure}%
    \begin{subfigure}{0.5\linewidth}%
        \centering%
        \tikzsetnextfilename{cylinder}%
        \def\datapath{res/cylinder}%
        \begin{tikzpicture}
    \pgfplotsset{
        cyl data/.code = \pgfplotsset{%
            table/x expr = \thisrow{r} * cos(deg(\thisrow{angle})),
            table/y expr = \thisrow{r} * sin(deg(\thisrow{angle})),
            table/z = z,
        },
        onlyfront/.code = \pgfplotsset{x filter/.expression={%
            \thisrow{face} == 0 || (%
                and(\thisrow{face}==2, or(2*pi+\a <= \thisrow{angle}, \thisrow{angle} <= \b))
            ) ? x : nan%
        },},
        onlyback/.code = \pgfplotsset{x filter/.expression={%
            \thisrow{face} == 1 || (%
                and(\thisrow{face}==2, or(2*pi+\a > \thisrow{angle}, \thisrow{angle} > \b))
            ) ? x : nan%
        },},
    }

    \pgfmathsetmacro{\a}{-pi*5/6}
    \pgfmathsetmacro{\b}{pi/6}

    \begin{axis}[illustr3d]
        \addplot3[edge, domain=\b:(2*pi+\a), samples = 90, samples y = 0] ({cos(deg(x))}, {sin(deg(x))}, {-0.5});

        \addplot3[backdata] table [cyl data] {\datapath/data.csv};

        \addplot3[backiter1,onlyback] table [cyl data] {\datapath/x1_log.csv};
        \addplot3[backiter2,onlyback] table [cyl data] {\datapath/x2_log.csv};
        \addplot3[backiter3,onlyback] table [cyl data] {\datapath/x3_log.csv};

        \addplot3[
            surf,
            mesh/ordering=x varies, 
            mesh/cols=16, 
            surfstyle,
        ] table [
            x expr = cos(deg(\thisrow{angle})),
            y expr = sin(deg(\thisrow{angle})),
            z = v,
            point meta = \thisrow{value},
         ] {\datapath/side_front.csv};
        \addplot3[
            surf,
            mesh/ordering=x varies, 
            mesh/cols=32, 
            surfstyle,
        ] table [
            x expr = \thisrow{v} * cos(deg(\thisrow{angle})),
            y expr = \thisrow{v} * sin(deg(\thisrow{angle})),
            z expr = 0.5,
            point meta = \thisrow{value},
         ] {\datapath/top.csv};
     
        \addplot3[edge, domain={0:2*pi}, samples = 60, samples y = 0] ({cos(deg(x))}, {sin(deg(x))}, {0.5});
        \addplot3[edge, domain={\a:\b}, samples = 60, samples y = 0] ({cos(deg(x))}, {sin(deg(x))}, {-0.5});
        \addplot3[edge] coordinates { (cos(deg(\a)), sin(deg(\a)), -0.5) (cos(deg(\a)), sin(deg(\a)), 0.5) };
        \addplot3[edge] coordinates { (cos(deg(\b)), sin(deg(\b)), -0.5) (cos(deg(\b)), sin(deg(\b)), 0.5) };

        \addplot3[data, onlyfront] table [cyl data] {\datapath/data.csv};
        \addlegendentry{Data}

        \addplot3[iter1,onlyfront] table [cyl data] {\datapath/x1_log.csv};
        \addlegendentry{Iterates 1}
        \addplot3[iter2,onlyfront] table [cyl data] {\datapath/x2_log.csv};
        \addlegendentry{Iterates 2}
        \addplot3[iter3,onlyfront] table [cyl data] {\datapath/x3_log.csv};
        \addlegendentry{Iterates 3}

        \addplot3[origin, onlyfront] table [cyl data] {\datapath/origin.csv};
        \addlegendentry{Origin}
    \end{axis}
\end{tikzpicture}%
        \caption{Cylinder}%
        \label{fig:numerical:cylinder}
    \end{subfigure}
    \caption{%
        Numerical illustrations.
        The surface shading indicates function value.
        Blue dots indicate data points for squared distance terms on exposed faces, while dimmer dots indicate data points on hidden faces. The green circle indicates the
        “origin” data point for the unsquared distance term.
        Algorithm iterates and connecting lines are shown for three different starting points.
    }
    \label{fig:numerical}
    \tikzexternaldisable
\end{figure}
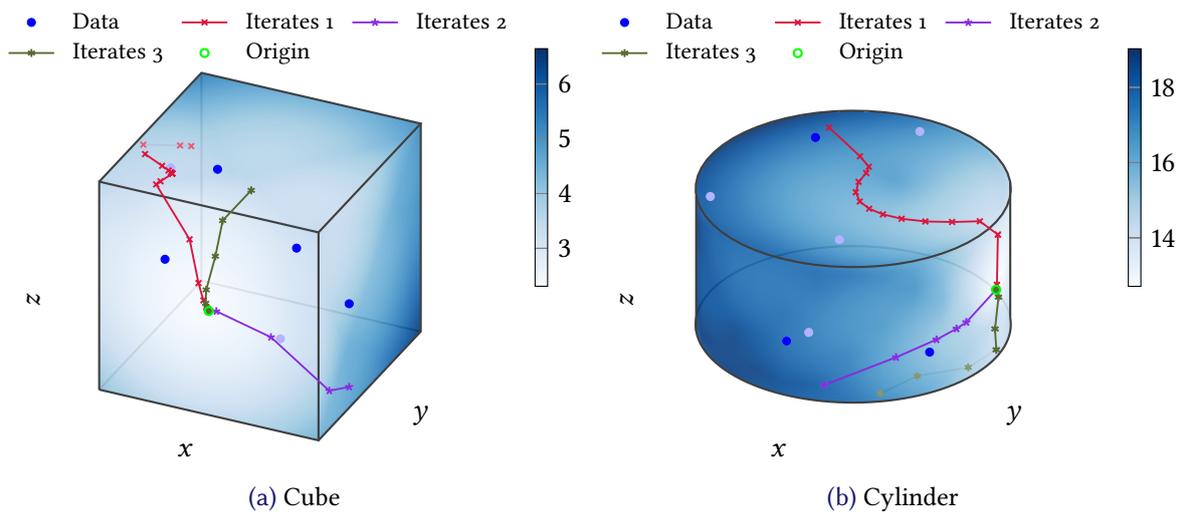

\bibliographystyle{jnsao}
\input{bilateral_fb.xbbl}

\end{document}